\theoremstyle:=definition,remark,plain\do{%
        \expandafter\g@addto@macro\csname th@\theoremstyle\endcsname{%
            \addtolength\thm@preskip\parskip
            }%
        }
\newtheorem{thm}{Theorem}[section]
\newtheorem{cor}[thm]{Corollary}
\newtheorem{lemma}[thm]{Lemma}
\newtheorem{claim}[thm]{Claim}
\newtheorem{obs}[thm]{Observation}
\newcommand{\N}{\mathbb{N}}
\newcommand{\powerset}[1]{\operatorname{Pow}(#1)}
\newcommand{\cDP}{\chi_\mathrm{DP}}
\newcommand{\fDP}{f_\mathrm{DP}}
\def\sspace{0.3}
\def\picvxs{
    \draw(0,0)node[graynode](f1){};
    \draw(\sspace,0.3)node[graynode](f2){};
    \draw(\sspace,-0.3)node[graynode](f3){};
    \draw(\sspace*2,0)node[graynode](f4){};
    \draw(\sspace*3,0.3)node[graynode](f5){};
    \draw(\sspace*3,-0.3)node[graynode](f6){};
    \draw(\sspace*4,0)node[graynode](f7){};
}
\newcommand\picpicd{
\picvxs
\draw(\sspace,1)node[blacknode](t1){};
\draw(\sspace*2,1)node[blacknode](t2){};
\draw(\sspace*1.5,1.5)node[blacknode,label=$z'$](t3){};
\foreach \from/\to in {t1/f2,t2/f4,t1/t2,t2/t3,t3/t1,f1/f2,f1/f3,f2/f3,f3/f4,f4/f5,f5/f6,f4/f6,f5/f7,f6/f7}
    \draw (\from) -- (\to);
\foreach \from/\to in {f4/f5,f5/f6,f4/f6,f5/f7,f6/f7}
    \draw[ultra thick] (\from) -- (\to);
\draw (f1) to [out=270,in=270,looseness=1.5] (f7);
\draw (t3) to [out=0,in=90] (f7);

\draw (f2)++(-0.15,0)node {$y$};
\draw (f4)++(0,-0.3)node {$x$};
\draw (f7)++(0.15,0)node  {$z$};

\draw (t1)++(-0.15,0)node {$y'$};
\draw (t2)++(0.15,0)node {$x'$};

\draw (f4)++(0,-1)node {(a)};
}
\newcommand\picpicg{
\picvxs
\draw(\sspace*1.5,1+0.5)node[blacknode,label=above:$y'$](t1){};
\draw(\sspace*2.5,1+0.5)node[blacknode,label=above:$x'$](t2){};
\draw(\sspace*2,1.5-0.5)node[blacknode,label=right:$z'$](t3){};

\foreach \from/\to in {t1/t2,t2/t3,t3/t1,f1/f2,f1/f3,f2/f3,f3/f4,f4/f5,f5/f6,f4/f6,f5/f7,f6/f7,t2/t3,f2/f4}
    \draw (\from) -- (\to);
\foreach \from/\to in {f4/f5,f5/f6,f4/f6,f5/f7,f6/f7}
    \draw[ultra thick] (\from) -- (\to);

\draw (t3) to [out=270,in=90] (f4);
\draw (t1) to [out=180,in=90] (f1);
\draw (t2) to [out=0,in=90] (f7);

\draw (f7)++(0,-0.3)node {$x$};
\draw (f4)++(0,-0.3)node {$z$};
\draw (f1)++(0,-0.3)node {$y$};

\draw (f4)++(0,-1)node {(b)};
}
\title{\vspace{-0.5in} A lower bound on the number of edges in DP-critical graphs. II. Four colors}
\author{
{{Peter Bradshaw}}\thanks{
\footnotesize{University of Illinois at Urbana--Champaign, Urbana, IL 61801, USA.
 E-mail: \texttt {pb38@illinois.edu}.
 Research 
is supported in part by NSF RTG grant DMS-1937241.
}}
\and
{{Ilkyoo Choi}}\thanks{
\footnotesize {Hankuk University of Foreign Studies, Yongin-si, Gyeonggi-do, Republic of Korea.
 E-mail: \texttt {ilkyoo@hufs.ac.kr}.
 Research 
is supported in part by the Hankuk University of Foreign Studies Research Fund.
}}
\and
{{Alexandr Kostochka}}\thanks{
\footnotesize {University of Illinois at Urbana--Champaign, Urbana, IL 61801, USA.
 E-mail: \texttt {kostochk@illinois.edu}.
 Research 
is supported in part by  NSF  Grant DMS-2153507 and by NSF RTG Grant DMS-1937241.
}}
\and{{Jingwei Xu}}\thanks{University of Illinois at Urbana--Champaign, Urbana, IL 61801, USA. 
 E-mail: \texttt{jx6@illinois.edu}. 
 Research 
is supported in part by   Campus Research Board Award RB24000 of the University of Illinois Urbana-Champaign.
}}
\begin{document}
\maketitle

\vspace{-0.3in}

\begin{abstract}
A graph $G$ is $k$-critical (list $k$-critical,  DP $k$-critical) if $\chi(G)= k$ ($\chi_\ell(G)= k$, $\cDP(G)= k$) and for every proper subgraph $G'$ of $G$, $\chi(G')<k$ ($\chi_\ell(G')< k$, $\cDP(G')<k$). 
Let $f(n, k)$ ($f_\ell(n, k), \fDP(n,k)$) denote the minimum number of edges in an $n$-vertex
$k$-critical (list $k$-critical, DP $k$-critical) graph. 
The main result of this paper is that if  $n\geq 6$ and $n\not\in\{7,10\}$, then
$$\fDP(n,4)>\left(3 + \frac{1}{5}  \right) \frac{n}{2}.
$$
This is the first bound on $\fDP(n,4)$ that is asymptotically better than the well-known bound  $f(n,4)\geq \left(3 + \frac{1}{13}  \right) \frac{n}{2}$ by Gallai from 1963.
The result also yields a  better bound on $f_{\ell}(n,4)$ than the one known before.
\vspace{0.15in}
 
\medskip\noindent
{\bf{Mathematics Subject Classification:}}  05C07, 05C15, 05C35.\\
{\bf{Keywords:}}  Color-critical graphs, DP-coloring, sparse graphs.
\end{abstract}

 \section{Introduction}


 \subsection{Critical graphs for proper coloring}
 Given a multigraph $G$, let $V(G)$ and $E(G)$ denote the vertex set and edge set, respectively, of $G$. 
A {\em (proper) $k$-coloring} of a graph $G$ is a mapping $g\,: \,V(G)\to\{1,\ldots,k\}$ such that $g(u)\neq g(v)$ for each $uv\in E(G)$.
The {\em chromatic number} of $G$,  denoted $\chi(G)$, is the minimum positive integer $k$ for which $G$ has a proper $k$-coloring.
A graph $G$ is {\em $k$-colorable} if $\chi(G)\leq k$. 
 For a positive integer $k$, a graph $G$ is {\em $k$-critical} if $\chi(G)=k$, but every
proper subgraph of $G$ is $(k-1)$-colorable.

Dirac~\cite{1951Dirac,1952DiracAproperty,1952DiracSome,1953Dirac} introduced and studied $k$-critical graphs beginning in 1951.  
In particular, he considered the minimum number $f(n,k)$ of edges in an $n$-vertex $k$-critical graph.
The bounds on $f(n,k)$ turned out to be quite helpful in studying  graph coloring.

In~\cite{kgeq5} we gave a new lower bound on the similar function $\fDP(n,k)$ for so called DP-coloring, defined below (Section~\ref{DPs}). The proof in~\cite{kgeq5} worked for $k\geq 5$, but the ideas were insufficient for $k=4$.
The goal of this paper is to prove a lower bound for $\fDP(n,4)$.
Below, we discuss mostly the state of art for the function $f(n,4)$, while the more general function $f(n,k)$ is discussed  in~\cite{kgeq5}.

In his fundamental papers~\cite{1963Gallai1} and~\cite{1963Gallai2} from 1963, Gallai proved a series of important facts about color-critical graphs. Recall that
a {\em Gallai tree} is a connected graph whose every block is a complete graph or an odd cycle. A~\emph{Gallai forest} is a graph where every connected component is a Gallai tree.

\begin{thm}[Gallai~\cite{1963Gallai2}]\label{Ga2}
Let $k\geq 4$, and let $G$ be a $k$-critical graph. If $B$ is the set of vertices of degree $k-1$ in $G$,
then $G[B]$ is a Gallai forest.
\end{thm}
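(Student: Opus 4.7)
I plan to argue by contradiction via a list-coloring extension. Suppose some connected component $H$ of $G[B]$ fails to be a Gallai tree; the goal is to construct a proper $(k-1)$-coloring of $G$, contradicting $\chi(G)=k$.

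First, since $V(H)\neq\emptyset$, the graph $G-V(H)$ is a proper subgraph of $G$ (the edge case $V(H)=V(G)$ I will handle at the end), and so by $k$-criticality it admits a proper $(k-1)$-coloring $\phi$. For each $v\in V(H)$ I form the list
$$L(v)=\{1,\dots,k-1\}\setminus\{\phi(u):u\in N_G(v)\setminus V(H)\}.$$
Because $v\in B$ satisfies $d_G(v)=k-1$, and because every neighbor of $v$ lying in $B$ must belong to the same component of $G[B]$ (namely $H$), $v$ has exactly $d_H(v)$ neighbors in $V(H)$ and at most $(k-1)-d_H(v)$ neighbors outside, so $|L(v)|\geq d_H(v)$.

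The next step is to invoke the Erd\H{o}s--Rubin--Taylor degree-choosability theorem: a connected graph $H$ admits a proper $L$-coloring for every list assignment $L$ with $|L(v)|\geq d_H(v)$ for all $v$, unless $H$ is a Gallai tree. Since $H$ is connected and, by assumption, not a Gallai tree, this gives a proper $L$-coloring $\psi$ of $H$; combining $\phi$ and $\psi$ produces a proper $(k-1)$-coloring of $G$, the desired contradiction. The degenerate case $V(H)=V(G)$ reduces to applying the same theorem directly to $G$ with constant lists $\{1,\dots,k-1\}$: since $G$ is not $(k-1)$-colorable, $G$ itself must then be a Gallai tree.

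The main obstacle is the degree-choosability theorem, which is where the heart of the argument lies. A self-contained proof typically proceeds by induction on $|V(H)|$ and inspects the block structure: in a block that is neither a clique nor an odd cycle, one locates two vertices $x,y$ at distance two sharing a common available color, assigns it to both, and then extends greedily along a BFS ordering rooted at this pair. Alternatively, one can follow Gallai's original Kempe-chain proof, which rules out each forbidden block type (even cycles, cliques minus an edge, and their generalizations) directly inside $G[B]$ without passing through list coloring. Either route requires nontrivial case analysis, but once established the reduction to $k$-criticality is the short step sketched above.
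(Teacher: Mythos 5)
The paper does not prove Theorem~\ref{Ga2}; it is cited directly to Gallai~\cite{1963Gallai2}, and the text immediately after Theorem~\ref{theo:list_Brooks} observes that the Borodin/Erd\H{o}s--Rubin--Taylor result ``can be viewed as a generalization of Theorem~\ref{Ga2}.'' Your proposal makes that observation precise and is correct: you reduce Gallai's theorem to Theorem~\ref{theo:list_Brooks}. Every step checks out. Since each neighbor of $v\in V(H)$ lying in $B$ must lie in $H$ itself (they are adjacent, hence in the same component of $G[B]$), the outside-neighbor count is exactly $(k-1)-d_H(v)$, so $|L(v)|\geq d_H(v)$ and your lists form a degree list assignment for the connected graph $H$; the degenerate case $B=V(G)$ is handled correctly (all degrees equal $k-1$, so the constant lists are a degree list assignment, and non-$(k-1)$-colorability forces $G$ to be a Gallai tree, contradicting your hypothesis). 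You also correctly flag that the heart of the matter is the degree-choosability theorem itself, which you treat as a black box just as the paper does. One small historical caveat: this is a modern proof, not Gallai's --- degree-choosability postdates Gallai's 1963 paper by over a decade, and his original argument was a Kempe-chain analysis inside $G[B]$, which you gesture at as the ``alternative'' route. Both Theorems~\ref{Ga2} and~\ref{theo:list_Brooks} are used in this paper as background input, so there is no internal proof to compare against; your reduction is a clean and standard way to obtain the former from the latter.
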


Gallai derived from
Theorem~\ref{Ga2}  a  bound on $f(n,k)$. For 
$k=4$ the lower bound is as follows:

If  $n\geq 6$, then
\begin{equation}\label{in3}
f(n,4)\geq \left(3+\frac{1}{13}\right)\frac{n}{2}.
\end{equation}

Krivelevich~\cite{1997Krivelevich,1998Krivelevich} improved the  bound on $f(n, 4)$ in~\eqref{in3}  to
\begin{equation}\label{in5}
f(n,4)\geq \left(3+\frac{1}{7}\right)\frac{n}{2}.
\end{equation}
 Then, Kostochka and Yancey~\cite{2014KoYa} proved an asymptotically exact bound:

If   $n\geq 6$, then
\begin{equation}\label{KYa}
    f(n,4)\geq   \left(3+\frac{1}{3}\right)\frac{n}{2}-
\frac{2}{3}.
\end{equation}

 \subsection{Known results on list coloring}
For a set $X$, let $\powerset{X}$ denote the power set of a set $X$, and denote $\bigcup_{v\in X}f(v)$ by $f(X)$.
Given a vertex $v\in V(G)$, the {\em degree} of $v$, denoted $d_G(v)$, is the number of edges incident with $v$. 
The {\em neighborhood} $N_G(v)$ of a vertex $v$ is the set of vertices adjacent to $v$.
Note that $|N_G(v)| \leq d_G(v)$, and equality holds if and only if $v$ has no incident parallel edges.
For vertex subsets $S_1$ and $S_2$, let $E_G(S_1, S_2)$ denote the set of edges $xy \in E(G)$ where $x\in S_1$ and $y\in S_2$.

 A \emph{list assignment} for a graph $G$ is a function $L \colon V(G) \to \powerset{Y}$, where $Y$ is a set whose elements are referred to as \emph{colors}. 
For each $u \in V(G)$, the set $L(u)$ is called the \emph{list} of $u$ and its elements are said to be \emph{available} for~$u$. A proper coloring $f \colon V(G) \to Y$ is  an \emph{$L$-coloring} if  $f(u) \in L(u)$ for each $u \in V(G)$. A graph~$G$ with a list assignment $L$ is said to be \emph{$L$-colorable} if it admits an $L$-coloring. 
The \emph{list chromatic number} $\chi_\ell(G)$ of $G$  is the least positive integer $k$ such that $G$ is $L$-colorable whenever $L$ is a list assignment for $G$ with $|L(u)| \geq k$ for all $u \in V(G)$. 
The proper coloring problem for $G$ can be expressed as a list coloring problem for $G$ by letting $L(v)$ be the same for each $v \in V(G)$. In particular, $\chi_\ell(G) \geq \chi(G)$ for all graphs $G$.

The definition of a critical graph can be naturally extended to the list coloring setting. 
A graph $G$ is {\em list $k$-critical} if $\chi_\ell(G) \geq k$, but
$\chi_\ell(G') \leq k-1$ for each proper subgraph $G'$ of $G$.
We define $f_\ell(n,k)$  to be the minimum number of edges in an $n$-vertex list $k$-critical graph.

A list assignment $L$ for a graph $G$ is called a \emph{degree list assignment} if $|L(u)| \geq d_G(u)$ for all $u \in V(G)$. The following fundamental result of Borodin~\cite{1979Borodin} and Erd\H os, Rubin, and Taylor~\cite{1980ErRuTa}  can be viewed as a generalization of Theorem~\ref{Ga2}.
	
\begin{thm}[Borodin~\cite{1979Borodin}; Erd\H os, Rubin, and Taylor~\cite{1980ErRuTa}]\label{theo:list_Brooks}
Let $G$ be a connected graph and $L$ be a degree list assignment for $G$. If $G$ is not $L$-colorable, then $G$ is a Gallai tree; furthermore, if $|L(u)| = d_G(u)$ for all $u \in V(G)$ and $u$, $v \in V(G)$ are two adjacent non-cut vertices, then $L(u) = L(v)$.
	\end{thm}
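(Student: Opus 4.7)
My plan is to prove Theorem~\ref{theo:list_Brooks} by induction on $|V(G)|$, combining both assertions into a single statement: if $G$ is connected and not $L$-colorable under a degree list assignment $L$, then (i) $|L(v)| = d_G(v)$ for all $v$, (ii) $G$ is a Gallai tree, and (iii) adjacent non-cut vertices in $G$ have identical lists. The driver of the whole argument is the following \emph{slack reduction}: if some vertex $v_0$ has $|L(v_0)| > d_G(v_0)$, then $G$ is $L$-colorable via a greedy coloring along the reverse of a BFS order rooted at $v_0$. Every $v \neq v_0$ has at least one neighbor later in the order (its BFS parent), so at each step the number of forbidden colors is at most $d_G(v) - 1 < |L(v)|$, and for $v_0$ itself the hypothesis $|L(v_0)| > d_G(v_0)$ leaves an available color at the end. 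Thus (i) is immediate.

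For (ii), I would reduce to the 2-connected case via the block decomposition. If $G$ has a cut vertex, let $B$ be a leaf block of $G$ and $w$ its unique cut vertex. If $B$ is a Gallai block (clique or odd cycle), then $G' = G - (V(B) \setminus \{w\})$ is a smaller connected graph that also fails to be a Gallai tree, so by induction $G'$ is $L$-colorable; the slack at $w$ in $B$ then lets us extend the coloring into $B - w$. If $B$ is non-Gallai, one instead colors $B$ first (exploiting the 2-connected analysis below and the slack at $w$) and peels off the rest using the induction hypothesis. The bookkeeping here leans on the combined induction hypothesis, since the Gallai-tree rigidity from (iii) is what guarantees that the color chosen at $w$ can be arranged so that the two sides agree.

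The main obstacle is the 2-connected case itself. Assume $G$ is $2$-connected and neither a clique nor an odd cycle. The classical structural lemma of Erd\H os, Rubin, and Taylor produces non-adjacent vertices $x, y$ at distance $2$ with a common neighbor $z$ such that $G - \{x, y\}$ is connected. If $L(x) \cap L(y) \neq \emptyset$, I assign a shared color $c$ to both $x$ and $y$; then in $G - \{x, y\}$ the vertex $z$ satisfies $|L(z) \setminus \{c\}| \geq d_G(z) - 1 > d_G(z) - 2 = d_{G - \{x, y\}}(z)$, so slack remains at $z$ and the slack reduction finishes. The subtle subcase is $L(x) \cap L(y) = \emptyset$, where one must choose $c_x \in L(x)$ and $c_y \in L(y)$ with $\{c_x, c_y\} \not\subseteq L(z)$ to preserve slack at $z$. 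This choice fails only when $L(x) \cup L(y) \subseteq L(z)$, in which case a careful count using $|L(x)| + |L(y)| = d_G(x) + d_G(y)$ against $|L(z)| = d_G(z)$, or switching to a different distance-$2$ pair supplied by the structural lemma, closes the loop; this is the step that requires the most care.

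For (iii), suppose $G$ is a Gallai tree with $|L(u)| = d_G(u)$ for every $u$, and $u, v$ are adjacent non-cut vertices of $G$ with $L(u) \neq L(v)$. Choose $c \in L(u) \setminus L(v)$ and fix the color of $u$ to be $c$. Because $u$ is not a cut vertex, $G - u$ remains connected, and in it $v$ satisfies $|L(v) \setminus \{c\}| = |L(v)| = d_G(v) > d_{G - u}(v)$, so $v$ has slack. The slack reduction yields an $L$-coloring of $G - u$ extending the chosen color of $u$, contradicting non-colorability. Hence $L(u) = L(v)$, completing the theorem.
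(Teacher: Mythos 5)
The paper does not prove Theorem~\ref{theo:list_Brooks}; it is cited as a classical result of Borodin and of Erd\H os, Rubin, and Taylor, so there is no in-paper proof to compare against. Judged on its own terms, your sketch captures the right skeleton (slack reduction for (i), block decomposition plus a $2$-connected core for (ii), and the slack argument for (iii), which is correct once you also delete $c$ from the lists of all other neighbors of $u$), but there are two places where the plan does not actually close.

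The first and more serious gap is exactly the one you flag in the $2$-connected case. The Lov\'asz--Brooks structural lemma (non-adjacent $x,y$ at distance two through $z$ with $G-\{x,y\}$ connected) is genuinely insufficient for the list version. In the subcase $L(x)\cap L(y)=\emptyset$ and $L(x)\cup L(y)\subseteq L(z)$, the inequality you get is only $d(z)=|L(z)|\ge|L(x)|+|L(y)|=d(x)+d(y)$, which is not a contradiction -- $z$ can simply have large degree -- and the lemma does not hand you an alternative pair to switch to. This is not a matter of ``a careful count''; the standard proofs avoid this dead end by using a strictly stronger structural statement, namely that a $2$-connected graph which is neither complete nor an odd cycle contains an induced even cycle with at most one chord (equivalently, a suitable theta subgraph), together with a separate colorability analysis of those specific configurations under arbitrary degree lists with one unit of slack. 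Without that stronger structure (or some substitute), the plan stalls precisely where you say it requires the most care.

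The second gap is in the block-decomposition step when the leaf block $B$ is non-Gallai. If you color $B$ first using the slack at the cut vertex $w$, then in $G'=G-(V(B)\setminus\{w\})$ the color of $w$ is already fixed, its neighbors in $G'$ each lose one list entry for one degree unit, and there is no slack left -- so the induction hypothesis gives you nothing. The usual argument goes the other way: color $G'$ first (it has slack at $w$), and then observe that if no $L$-coloring of $G'$ extended into $B$, then for the forced color $c=\phi(w)$ the graph $B-w$ with the accordingly reduced lists would be a smaller non-colorable connected graph with a degree list, to which the induction hypothesis applies; one then has to argue this forces $B$ to be a clique or odd cycle. Your phrase ``the bookkeeping here leans on the combined induction hypothesis'' is where this argument actually lives, and as written it is not carried out. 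Both gaps are fixable, but they are the substantive content of the theorem, not routine verification.
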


Exactly as Theorem~\ref{Ga2} implies~\eqref{in3}, Theorem~\ref{theo:list_Brooks} yields the same lower bound on~$f_\ell(n,k)$. For $k=4$, the only improvement of it was made by Rabern~\cite{2016Rabern}:
For  $n\geq 6$, 
\begin{equation}
  f_\ell(n,4)\geq
   \left(3+\frac{1}{10}\right)\frac{n}{2}.
\end{equation}

\subsection{ DP-coloring and our results}\label{DPs}

In this paper we study a generalization of list coloring  recently introduced by Dvo\v r\' ak and Postle~\cite{2018DvPo}; 
they called it \emph{correspondence   coloring}, and we call it \emph{DP-coloring} for short. 

For a multigraph $G$, a \emph{(DP-)cover} of $G$ is a pair $(H, L)$, where $H$ is a graph and $L \colon V(G) \to \powerset{V(H)}$ is a function such that 
\begin{itemize}
    \item The family $\{L(u) : u \in V(G)\}$ forms a partition of $V(H)$. 
    \item For each $u \in V(G)$, $H[L(u)]$ is an independent set.
    \item For each $u,v \in V(G)$, if $|E_G(u,v)|=s$, then $E_H(L(u), L(v))$ is the union of $s$ matchings (where each matching is not necessarily perfect and possibly empty). 
\end{itemize}
We often refer to the vertices of $H$ as \emph{colors}.
A multigraph $G$ with a cover $(H,L)$ has an \emph{$(H,L)$-coloring} if $H$ has an independent set containing exactly one vertex from $L(v)$ for each $v \in V(G)$.
The \emph{DP chromatic number} $\cDP(G)$ of a multigraph $G$  is the least positive integer $k$ such that $G$ has an $(H,L)$-coloring
whenever $(H,L)$ is a DP-cover of $G$ with $|L(u)| \geq k$ for all $u \in V(G)$.
Every list coloring problem can be represented as a DP-coloring problem.
In particular, $\cDP(G) \geq \chi_\ell(G)$ for all multigraphs $G$.

We say that a multigraph $G$ is \emph{DP degree-colorable} if $G$ has an $(H, L)$-coloring whenever $(H, L)$ is a cover of $G$ with $|L(u)|\geq d_G(u)$ for all $u\in V(G)$. 
A multigraph $G$ is \emph{DP $k$-critical} if $\cDP(G) = k$ and $\cDP(G') \leq k-1$ for every proper subgraph $G'$ of $G$.
We let $\fDP(n,k)$ denote the minimum number of edges in an $n$-vertex DP $k$-critical simple graph.

For a simple graph $G$ and a positive integer $t$, the {\em multiple} $G^t$ of $G$ is the multigraph obtained from $G$ by replacing each edge  $uv \in E(G)$
with $t$  edges joining  $u$ and $v$. 
In particular, $G^1=G$.
A \emph{GDP-forest} is a multigraph in which every block is isomorphic to either $K_n^t$ or $C_n^t$ for some $n$ and $t$. 
A {\em GDP-tree} is a connected GDP-forest. 
Note that every Gallai tree is also a GDP-tree.

Theorems~\ref{Ga2} and~\ref{theo:list_Brooks} extend  to DP-coloring as follows. 

\begin{thm}[Dvo\v{r}\'ak and Postle~\cite{2018DvPo} for simple graphs; Bernshteyn, Kostochka, and Pron’~\cite{2017BeKoPr} for multigraphs]
\label{thm:deg-choosable}
    Suppose that $G$ is a connected multigraph. Then, $G$ is not DP degree-colorable if and only if $G$ is a GDP-tree.
\end{thm}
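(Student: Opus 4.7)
The theorem is an equivalence. The \emph{if} direction (every GDP-tree fails DP degree-colorability) is the easier half, proved by exhibiting a single bad DP-cover on each GDP-tree; the \emph{only if} direction requires a careful induction whose hard core is the case when $G$ is $2$-connected.

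For the \emph{if} direction, I would build an uncolorable cover block by block. For a $C_n^t$ block with $n$ odd, I would use the DP-analogue of the classical odd-cycle obstruction: $t$-fold identity matchings on all edges except one ``twisted'' edge whose matchings implement a cyclic shift, producing the usual parity conflict. For a $K_n^t$ block, the bad cover generalizes the $K_n$ construction (common lists $[n-1]$ with identity matchings) to the $t$-multiple case via an appropriate arrangement of $t$ matchings between each adjacent pair. To assemble these covers on a general GDP-tree, at each cut vertex $w$ shared by blocks $B_1,\ldots,B_s$ we have $d_G(w)=\sum_i d_{B_i}(w)$, so I take $L(w)$ to be the disjoint union $\bigsqcup_i L_{B_i}(w)$ and inherit matchings from each block cover separately. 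For any candidate global coloring $\phi$, orient each block-cut-tree edge between a block $B$ and a cut vertex $w$ as $w \to B$ if $\phi(w) \in L_B(w)$, and as $B \to w$ otherwise; this is a DAG (since the block-cut tree is acyclic) in which every cut vertex has out-degree $1$, so every sink is a block $B$, and such $B$ is fully ``attacked'' (every cut vertex in $B$ has $\phi$-value in $L_B$), making $\phi|_{V(B)}$ a valid coloring of $B$ with its bad block cover, a contradiction.

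For the \emph{only if} direction, I would prove by induction on $|V(G)|$ that every connected non-GDP-tree multigraph is DP degree-colorable. Given a DP-cover $(H,L)$ with $|L(v)|\geq d_G(v)$, if some $|L(v)|>d_G(v)$, delete $v$, apply induction to a suitable part of $G-v$, and extend $v$ by a leftover color. So assume $|L(v)|=d_G(v)$ throughout. If $G$ has a cut vertex $w$ with $G=G_1\cup G_2$ (sharing only $w$), then not both $G_1, G_2$ are GDP-trees, for otherwise $G$ itself would be; WLOG $G_1$ is not, so induction produces a coloring of $G_1$, fixing $\phi(w)$. The residual cover on $G_2 - w$ has $\geq d_G(u)-|E_G(u,w)|\geq d_{G_2-w}(u)$ colors at every $u \in V(G_2)\setminus\{w\}$, and one finishes by induction if $G_2 - w$ has a non-GDP-tree component, or by exploiting the multiple choices of $\phi(w)$ from the size-$d_G(w)$ list $L(w)$ to defeat the GDP-forest obstruction component by component. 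This leaves the most delicate case: $G$ is $2$-connected but is neither $K_n^t$ nor $C_n^t$ with $n$ odd.

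The principal obstacle is the $2$-connected case. In the list-coloring proof of Borodin and Erd\H os--Rubin--Taylor, one locates an even cycle with at most one chord (or another structural witness) and hand-colors it, exploiting that all vertices share a common color universe. In the DP-setting, matchings may be arbitrary, so the ``identify identical lists'' trick is unavailable and the reduction must be performed by carefully modifying the DP-cover. Following the Dvo\v r\'ak--Postle approach, the plan is to locate two vertices $u,v$ in a $2$-connected non-GDP-tree graph along with a short connecting substructure that permits identifying certain colors across the cover, yielding a smaller instance to which the inductive hypothesis applies while preserving the degree-cover condition. Verifying that such a reduction always exists, and that the resulting smaller instance is still not a GDP-tree, is the main technical challenge.
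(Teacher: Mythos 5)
This is a cited theorem (Dvo\v{r}\'ak--Postle for simple graphs, Bernshteyn--Kostochka--Pron' for multigraphs), so the paper does not contain a proof to compare against. Evaluating your sketch on its own merits, there is a genuine conceptual gap that would derail both directions of the argument.

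You have imported the list-coloring intuition that only \emph{odd} cycles are obstructions: you say ``For a $C_n^t$ block with $n$ odd, I would use \dots a cyclic shift, producing the usual parity conflict,'' and later characterize the $2$-connected base case as ``neither $K_n^t$ nor $C_n^t$ with $n$ odd.'' But the paper's definition of a GDP-forest allows \emph{any} $C_n^t$, with no parity restriction, and this is one of the essential ways DP-coloring differs from list coloring: even cycles are also not DP degree-colorable. Lemma~\ref{lem:cycle-cover} in this very paper records the structure of the bad covers in both cases. Worse, you have the covers swapped: the all-identity matching ($H$ two disjoint $C_n$'s) is what defeats \emph{odd} $C_n$ --- exactly the classical parity conflict --- while the single ``twisted'' edge ($H$ one cycle of length $2n$) defeats \emph{even} $C_n$; with the twist on an odd cycle the alternating choice $a_1, b_2, a_3, \dots, a_n$ is an independent set, so there is no obstruction. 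As written, your \emph{if} direction produces no bad cover for any cycle block (the proposed cover fails on odd cycles, and even cycles are never considered), and your \emph{only if} direction induction would bottom out at the wrong list of $2$-connected exceptions, which would make the claimed reduction fail precisely when $G$ is an even $C_n^t$. The block-gluing argument via the block-cut tree orientation is a nice idea and the cut-vertex reduction is plausible, but the cycle-block bookkeeping must be fixed before either half goes through; you should also be explicit about what uncolorable degree-cover you place on $K_n^t$ for $t\ge 2$, since the lists there have size $t(n-1)$ and the simple ``common list $[n-1]$'' picture does not directly transfer.
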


Analogous to how Theorems~\ref{Ga2} and~\ref{theo:list_Brooks} imply lower bounds on $f(n, k)$ and $f_\ell(n, k)$, respectively, Theorem~\ref{thm:deg-choosable} yields that~\eqref{in3} holds also for $\fDP(n,4)$ when $n\geq 6$. 


The main result of this paper is a lower bound on $\fDP(n,4)$
that is asymptotically better than the one in~\eqref{in3}. Our result implies that 
\begin{equation}\label{ourr}
\mbox{for $n\geq 6$, $n\notin \{7,10\}$, }  \qquad\fDP(n,4)>
   \left(3+\frac{1}{5}\right)\frac{n}{2}.
\end{equation}  
To state the full result, we need to introduce some notions.

For a graph $G$ and a vertex $u$ of $G$, a {\it split of $u$ into $u_1, u_2$} is a new graph $G'$ obtained
 by partitioning $N_G(u)$ into two nonempty subsets $U_1$ and $U_2$, deleting $u$ from $G$, and adding two new vertices $u_1$ and $u_2$, so that $N_{G'}(u_i) = U_i$.
A \emph{DHGO-composition} (named after Dirac, Haj\'os, Gallai, and Ore)
of two graphs $G_1$ and $G_2$ is a new graph obtained by deleting some edge $xy$ from $G_1$, splitting some vertex $z$ from $G_2$ into $z_1, z_2$, and identifying $x, y$ with $z_1, z_2$, respectively. 

We recursively define $4$-Ore graphs as follows.
A graph $F$ is a \emph{4-Ore graph} if either $F$ is isomorphic to $K_4$, or $F$ is a DHGO-composition of two $4$-Ore graphs. 
 By definition, each $4$-Ore graph has $3s+1$ vertices and $5s+1$ edges for some integer $s \geq 1$.
 In these terms, our main result is:

\begin{thm}\label{thethm}
    If $G$ is a DP 4-critical graph, then one of the following holds:
    \begin{enumerate}
        \item $G$ is a $4$-Ore graph on at most $10$ vertices.

        \item $|E(G)| \geq \frac{8|V(G)| + 1}{5}$. 
    \end{enumerate}
\end{thm}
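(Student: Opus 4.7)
The plan is a discharging argument combined with structural reducibility lemmas and induction on $|V(G)|$. I would assign each vertex $v$ the initial charge $\mu(v) = 5 d_G(v) - 16$, so that $\sum_v \mu(v) = 10|E(G)| - 16|V(G)|$ and the target $|E(G)| \geq (8|V(G)|+1)/5$ becomes $\sum_v \mu(v) \geq 2$. Under this assignment a degree-$3$ vertex (\emph{low}) has charge $-1$, a degree-$4$ vertex has charge $4$, and in general a \emph{high} vertex of degree $d \geq 4$ has charge $5d - 16 \geq 4(d-3)$. A direct check shows that a 4-Ore graph on $3s+1$ vertices has total initial charge $2s - 6$, which is less than $2$ precisely for $s \leq 3$, i.e. $n \in \{4, 7, 10\}$; this matches the stated exceptions and validates the choice of charge.

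The structural backbone is that, by Theorem~\ref{thm:deg-choosable} applied to the low-vertex subgraph $G[L]$ with the DP cover inherited from a DP $3$-coloring of $G - e$ for an appropriate edge $e$, the subgraph $G[L]$ is a GDP-forest. Since every low vertex has degree $3$ in $G$ and $G$ is simple, each block of $G[L]$ is $K_2$, $K_3$, $K_4$, or an odd cycle of length at least $5$, and a $K_4$ block immediately forces $G \cong K_4$. I would then build a catalogue of reducible configurations: local low-vertex structures (in particular certain attachments of triangle and edge blocks to $H$) whose presence allows $G$ to be expressed as a DHGO-composition of two smaller DP 4-critical graphs $G_1, G_2$ with $|V(G)| = |V(G_1)| + |V(G_2)| - 1$ and $|E(G)| = |E(G_1)| + |E(G_2)| - 1$. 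Induction then implies that either both pieces satisfy the edge bound (hence so does $G$), or both pieces are 4-Ore graphs on at most $10$ vertices, so $G$ itself is 4-Ore (and either small, giving an exception, or large, giving the bound). Establishing each reducibility in the DP setting requires constructing an explicit DP cover of $G$ that extends no DP $3$-coloring of the smaller graph, contradicting criticality.

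Once the reducible configurations are excluded, I would apply discharging rules moving charge from $H$ to $L$: each high vertex would send a fixed fraction of its charge to each low neighbor, with a secondary rule to propagate charge along the (now highly constrained) components of $G[L]$ to cover low vertices with few or no high neighbors. Case analysis over the surviving block types of $G[L]$ and their attachments to $H$ would show that every vertex ends with nonnegative charge and that the global surplus is at least $2$. The hardest part will be handling triangle ($K_3$) blocks of $G[L]$: these are locally the tightest configurations (each vertex has only one edge leaving the block, giving minimal charge-transfer capacity), and they are precisely the blocks out of which the small 4-Ore exceptions are built. Separating the reducible triangle configurations from the irreducible ones that are forced to sit inside a small 4-Ore graph is the technical crux, and is what both produces the constant $\tfrac{1}{5}$ and identifies $n \in \{4, 7, 10\}$ as the exact exception list.
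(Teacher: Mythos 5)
Your charge function $\mu(v) = 5d_G(v) - 16$ is exactly the same accounting as the paper's potential $\rho_{\mathbf 3}$: the condition $\sum_v\mu(v)\geq 2$ is equivalent to $8|V(G)| - 5|E(G)| \leq -1$, and your computation that a $4$-Ore graph on $3s+1$ vertices has total charge $2s-6$ matches Observation~\ref{obs:ore-potential}. But there are two serious problems with the plan you've sketched.

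First, you misstate the structure of the low-vertex subgraph. Theorem~\ref{thm:deg-choosable} gives you that $G[L]$ is a GDP-forest, and a GDP-forest permits every cycle $C_n^t$ --- including even cycles --- as a block. Your claim that ``each block of $G[L]$ is $K_2$, $K_3$, $K_4$, or an odd cycle of length at least $5$'' is the Gallai-tree characterization, which is correct for ordinary and list coloring but false for DP coloring. Even-cycle blocks among the low vertices are precisely the new obstruction that makes DP-critical graphs harder than list-critical ones (it is why the DP-degree-colorability theorem replaces odd cycles by arbitrary cycles), and a discharging argument that never considers even-cycle blocks is unsound. The paper devotes most of Section~3 (Lemmas~\ref{lem:no-low-2cycle} through~\ref{lem:no-low-cycle}) to ruling out low cycles of \emph{every} length inside a carefully chosen pendent edge-block $B^*$, and this is the technical heart of the proof.

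Second, your induction scheme --- express $G$ as a DHGO-composition of smaller DP $4$-critical graphs and apply the inductive hypothesis --- does not close as stated. When you delete or contract a low configuration and account for the neighbors' reduced freedom, the object you are left with is not a smaller DP $4$-critical simple graph: it is a multigraph with a nonuniform list-size function (some vertices effectively retain only $1$ or $2$ colors), and parallel edges can arise from identifications. This is exactly why the paper proves the strengthened Theorem~\ref{thm:stronger} for multigraphs equipped with $h:V(G)\to\{0,1,2,3\}$ and a potential that charges pairs by $1-6|E_G(x,y)|$; the recursive objects are $h$-minimal subgraphs of auxiliary multigraphs, not DP $4$-critical simple graphs. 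Without such a strengthening, your induction on $|V(G)|$ over DP $4$-critical simple graphs has no hypothesis to invoke after the reduction. Relatedly, the paper does not in fact decompose $G$ as a DHGO-composition of two critical pieces at all; it constructs auxiliary multigraphs by edge deletion, vertex identification, and edge addition, and then exploits Lemma~\ref{lem:j(k-2)} and submodularity of the potential to bound arbitrary subsets of $V(G)$. That is strictly more flexible than demanding a clean two-piece decomposition, and it is what lets the argument go through in the $k=4$ DP setting where no such decomposition need exist.
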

Clearly, this result yields~\eqref{ourr}.

We do not know whether $f_{\ell}(n,4)\geq \fDP(n,4)$ for all $n\geq 11$.
However, our method for proving Theorem~\ref{thethm} also allows us to show the following bound on $f_{\ell}(n,4)$.

\begin{cor}\label{cor62}
    For $n\geq 6$,  $n\notin \{7,10\}$,
\begin{equation}\label{ourr2}
  f_{\ell}(n,4)>
   \left(3+\frac{1}{5}\right)\frac{n}{2}.
\end{equation} 
\end{cor}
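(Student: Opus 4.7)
The plan is to adapt the proof of Theorem~\ref{thethm} to the list coloring setting rather than to deduce Corollary~\ref{cor62} from it as a black box. A direct deduction is not possible because a list $4$-critical graph need not be DP $4$-critical: while $\chi_\ell(G)\leq \cDP(G)$ in general, a proper subgraph $G'$ of a list $4$-critical $G$ satisfies $\chi_\ell(G')\leq 3$ but may still have $\cDP(G')=4$, so $G$ itself need not be minimal with respect to $\cDP$. One must therefore rerun the entire argument, replacing each DP-coloring tool with a list-coloring counterpart.

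The pivotal ingredient used in the proof of Theorem~\ref{thethm} is Theorem~\ref{thm:deg-choosable}, which controls the subgraph of a DP $4$-critical graph induced by its vertices of degree $3$ (the minimum degree of any $4$-critical graph). For list $4$-critical graphs, Theorem~\ref{theo:list_Brooks} provides the precise analog: the subgraph induced by the degree-$3$ vertices is a Gallai forest, and any two adjacent non-cut vertices whose lists have size exactly their degrees must share the same list. Because we work throughout with simple graphs, a simple GDP-forest is the same as a Gallai forest, so the local structural information available at low-degree vertices matches the DP case exactly. With this dictionary established, one replays the proof of Theorem~\ref{thethm}: assume $G$ is a minimum counterexample to~\eqref{ourr2}, extract the Gallai-forest structure of its degree-$3$ subgraph, perform the same reductions (vertex deletions, identifications, local recolorings), invoke the inductive lower bound on $f_\ell(n,4)$ on the resulting smaller graphs, and carry out the identical edge-counting/discharging calculation. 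The exceptional values $n\in\{7,10\}$ are forced for the same reason as in the DP case: the $4$-Ore graphs on $7$ and $10$ vertices are list $4$-critical but have at most $\tfrac{8n}{5}$ edges.

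The main obstacle is verifying that each local reduction used in the proof of Theorem~\ref{thethm} remains valid in the simple-graph/list-coloring framework. Several reductions in the DP setting rely on vertex-splitting or edge-identification operations that can in principle produce multi-edges, and in the list setting one must either check that no multi-edge arises or supply a substitute argument staying within simple graphs. One also needs to confirm that the ``equal lists on adjacent non-cut vertices'' conclusion of Theorem~\ref{theo:list_Brooks} is as strong as the corresponding conclusion about matching-structure in DP-covers exploited in the DP proof; any step requiring strictly more than the list conclusion would have to be re-examined and handled directly for list coloring.
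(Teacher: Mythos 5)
Your opening observation is correct and important: a list $4$-critical graph need not be DP $4$-critical, so one cannot simply write $f_\ell(n,4)\geq\fDP(n,4)$ and stop. But the fix you propose — replaying the entire inductive argument behind Theorem~\ref{thm:stronger} with Theorem~\ref{theo:list_Brooks} standing in for Theorem~\ref{thm:deg-choosable} — is not what the paper does, and you have yourself flagged why it is unlikely to go through unmodified. The inductive framework of Theorem~\ref{thm:stronger} lives on multigraphs with variable demand functions $h\colon V\to\{0,1,2,3\}$ precisely because several reductions (for example the identification of $v_1$ and $v_4$ in Lemma~\ref{dia}, the $uw$-edge addition with a hand-built matching in Lemma~\ref{lem:no2}, and the cycle-contraction construction in Lemma~\ref{comp}) create parallel pairs and require choosing the matchings in the cover after the reduction. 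DP-coloring tolerates these operations; list coloring does not, since after identifying two vertices you have no freedom to re-match lists. So the ``dictionary'' you describe does not actually translate the proof, and the obstacle you list at the end is not a detail to verify but the place where the rerun breaks.

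The paper's actual route avoids this entirely. It introduces a notion of being \emph{$(H,L)$-minimal} — minimal with respect to one fixed cover — and proves (Theorem~\ref{thm:HLcrit}) that if $G$ has no exceptional subgraph and is $(H,L)$-minimal for some $h$-cover, then $\rho_h(G)\leq -1$. The proof applies Theorem~\ref{thm:stronger} as a black box, but to a \emph{proper subgraph} of $G$: since $G$ fails to be DP $h$-colorable (the cover $(H,L)$ witnesses this), $G$ contains an $h$-minimal subgraph $G_0$, to which Theorem~\ref{thm:stronger} applies and gives $\rho_{G,h}(V(G_0))\leq -1$. One then takes a maximal $G_1\supseteq G_0$ of potential at most $-1$, uses $(H,L)$-minimality to $(H,L)$-color $G_1$ (which is proper in $G$), and shows via the potential bound that the residual list sizes on $G-V(G_1)$ are large enough that no $h'$-minimal subgraph can exist there, so the coloring extends — a contradiction. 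Corollary~\ref{cor62} then follows by encoding a witnessing list assignment on a list $4$-critical $G$ as a DP cover $(H,L)$ and noting that list $4$-criticality makes $G$ $(H,L)$-minimal. This is the key idea your proposal is missing: apply the DP bound to a subgraph, not to $G$, and close the argument with a maximal-potential / extension step rather than an inductive rerun.
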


The structure of the paper is as follows.
In Section~\ref{sec:setup}, we introduce a stronger version of Theorem~\ref{thethm}, which allows multiple edges and variable list sizes of the vertices.
Properties of a minimum counterexample are shown in Section~\ref{sec:propG}, and we finish the proof using discharging
in Section~\ref{sec:discharging}.
In Section~\ref{sec:list}, we prove Corollary~\ref{cor62}.

\section{Setup}\label{sec:setup}

In order to prove Theorem~\ref{thethm},
we need a stronger statement.  Our approach is similar to the one in~\cite{kgeq5}, but we need new tricks.
One of the reasons is the need to handle different exceptional graphs, which are in this case $4$-Ore graphs with few vertices.
Let $G$ be a loopless multigraph, and let $h:V(G) \to \mathbb \{0, 1,2,3\}$ be a function. 
For each $v \in V(G)$ and $xy\in {V(G)\choose 2}$, we define the \emph{potential} $\rho_{G,h}(v)$ and $\rho_{G,h}(xy)$ as follows:
\[
\hfill
\rho_{G,h}(v) = 
\begin{cases}
    8 & \textrm{ if } h(v) = 3, \\ 
    4 & \textrm{ if } h(v) = 2, \\ 
    1 & \textrm{ if } h(v) = 1, \\
    -1 & \textrm{ if } h(v) = 0.
\end{cases}
\qquad\hfill\qquad
\rho_{G,h}(xy) = 
\begin{cases}
0 & \textrm { if } |E_G(x,y)| = 0,  \\
1 - 6|E_G(x,y)| & \textrm{ if } |E_G(x,y)| \geq 1.
\end{cases}
\hfill
\]

Given a vertex subset $A \subseteq V(G)$, 
we define the \emph{potential} of $A$ as 
\[\rho_{G,h}(A) = \sum\nolimits_{v \in V(A)} \rho_{G,h}(v) + \sum\nolimits_{xy \in \binom{A}{2}} \rho_{G,h}(xy) .\]
We also write $\rho_h(G) = \rho_{G,h}(V(G))$. 
We often omit $G$ and $h$ when they are clear from context.
 If  $h \equiv 3$ is a constant function on $V(G)$, we often write $\rho_{\mathbf{3}}(G) = \rho_{G,h}(V(G))$.

Given a multigraph $G$, let $h:V(G) \to \N \cup\{0\}$.
A cover $(H,L)$ of $G$ is an {\em $h$-cover} of $G$ if $|L(v)| \geq h(v)$ for each $v \in V(G)$.
When $h\equiv t$ for some constant $t\in \N$, we call an $h$-cover simply a {\em $t$-cover}.
We say that $G$ is {\em DP $h$-colorable} if $G$ has an $(H,L)$-coloring for every $h$-cover $(H,L)$ of $G$. 
We say that $G$ is (DP) {\em $h$-minimal}
if $G$ is not DP $h$-colorable, but every proper subgraph $G'$ of $G$ is DP $h\vert_{V(G')}$-colorable.
Given a vertex subset $A \subseteq V(G)$, we often write $h$ for the restriction $h\vert_{A}$ when this does not cause confusion.

We aim to prove the following strengthening of Theorem~\ref{thethm}.
\begin{thm}
\label{thm:stronger}
    Let $G$ be a loopless multigraph,
    and let $h:V(G) \to \{0, 1,2,3\}$.
     If $G$ is $h$-minimal, then one of the following holds:
    \begin{enumerate}
        \item $G$ is a $4$-Ore graph on at most $10$ vertices, and $h(v) = 3$ for each $v \in V(G)$.
        \item $\rho_h(G) \leq -1$.
    \end{enumerate}
\end{thm}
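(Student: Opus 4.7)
The plan is to argue by induction on $|V(G)| + |E(G)|$, assuming a minimum counterexample $(G,h)$ to Theorem~\ref{thm:stronger}. Thus $G$ is $h$-minimal, is not a $4$-Ore graph on at most $10$ vertices with $h \equiv 3$, and satisfies $\rho_h(G) \geq 0$, while the theorem holds for every proper subgraph of $G$ under the restricted function. The overall framework follows \cite{kgeq5}: first extract strong structural restrictions on $(G,h)$, then close the argument by a discharging count whose initial charge is exactly $\rho_{G,h}$.

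For the structural part, I would begin with routine minimality observations: $h(v) \geq 1$ (otherwise $G-v$ inherits an uncolorable cover), and $h(v) \leq d_G(v)$ counted with multiplicity (else Theorem~\ref{thm:deg-choosable} applied to $G-v$ extends the coloring to $v$). The heart of the section should be a ``potential lemma'' asserting that every proper nonempty $A \subsetneq V(G)$ satisfies $\rho_{G,h}(A) \geq C_A$ for explicit constants, with equality forcing $G[A]$ to be one of the small exceptional $4$-Ore configurations under $h \equiv 3$. Unlike list coloring, DP-coloring does not permit identifying vertices that share a common color, so the reduction must be performed at the level of the cover $(H,L)$ itself: one builds a ``contracted'' loopless multigraph with an adjusted $h$ whose $h$-minimality is inherited from $G$, then applies induction. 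The exceptional $4$-Ore graphs on $4$, $7$, and $10$ vertices correspond precisely to the obstructions where this inductive step produces another exceptional subproblem instead of an improvement, and they must be unpacked through their DHGO decomposition.

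With these constraints in hand, I would run a discharging argument. Each vertex $v$ receives initial charge $\rho_{G,h}(v) \in \{-1,1,4,8\}$ and each pair $\{x,y\}$ with $|E_G(x,y)| \geq 1$ receives $1 - 6|E_G(x,y)|$, so that the total is exactly $\rho_h(G)$. The rules push charge from vertices with $h=3$ and high degree toward neighbors of smaller $h$-value, toward pairs in parallel classes, and toward vertices sitting inside small near-$4$-Ore subgraphs, using the structural lemmas to bound how often a typical vertex can be asked to donate. If the rules are calibrated correctly, every vertex and every supported pair ends with nonnegative charge, forcing $\rho_h(G) \geq 0$ to improve to $\rho_h(G) \leq -1$ as soon as $(G,h)$ is not exceptional, a contradiction. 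The factor $\tfrac{1}{5}$ in the main bound matches the slack available from a $\rho$-function that assigns weight $8$ per $h=3$ vertex and $-5$ per edge in the baseline simple case.

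The main obstacle, I expect, is the delicate interplay between the contraction reduction and the $4$-Ore exceptional family. Because the small $4$-Ore graphs themselves carry $\rho_h \in \{0,1,2\}$, any reducible configuration whose contraction lands inside this family yields essentially no slack in the induction, so the structural lemmas must include sharp ``near-exceptional'' clauses that control how such pieces can sit inside $G$. Consequently the discharging will likely need second-neighborhood rules, or rules triggered specifically by triangles that participate in a $K_4$-minus-an-edge or in a $7$-vertex $4$-Ore fragment, to peel these almost-tight configurations apart. Verifying that all such near-exceptional pieces can be treated simultaneously without violating a potential lower bound, and that the resulting charges indeed land nonnegative everywhere, will dominate the length and technicality of the proof.
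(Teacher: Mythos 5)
Your plan correctly identifies the two-phase architecture (structural restrictions on a minimum counterexample, then a discharging count whose total is $\rho_h(G)$), and your intuition that small $4$-Ore pieces give ``essentially no slack'' is exactly right. But what you have is a framework, not a proof, and several of the specific devices the paper relies on are absent or guessed incorrectly.

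The central quantitative tool is not a generic ``$\rho_{G,h}(A)\geq C_A$'' but a precise boundary-edge bound: if $E_G(S,\overline S)$ has $i$ parallel pairs and $j$ simple pairs, then $\rho_{G,h}(S)\geq 4i+j+1$. This is proved by peeling off an $h'$-minimal subgraph of $G-S$ and using the counterexample's minimality, and it immediately yields $\rho_{G,h}(U)\geq 0$ for all $U$, $\rho\geq 2$ or $\geq 3$ across single or double cut edges, and the nonexistence of vertices with $h(v)=1$. You gesture at this but never pin it down, and the constants matter: they are exactly calibrated to the weights $(8,4,1,-1)$ and $1-6s$.

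Second, the pivotal structural choice is not a vertex-contraction reduction as you propose (the paper in fact avoids contraction almost entirely; the one identification argument appears only in ruling out a $K_4^-$ on low vertices). Rather, the proof fixes a special set $S_0^*$: the vertex set of a \emph{smallest pendent edge-block of smallest potential}, and shows that inside $S_0^*$ the subgraph $\mathcal B_0$ induced by low vertices is a \emph{forest}. Ruling out low cycles (Lemmas on no $2$-cycle, no $3$-cycle, no longer cycle) is where the $4$-Ore exceptions actually enter, via the auxiliary graph $F(C,u,u')$ built by deleting the low cycle $C$ and adding an edge between two of its outside neighbors, and via a tailored lemma about composing the Moser spindle with a triangle. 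Your proposal's ``contraction at the cover level'' and ``second-neighborhood rules'' do not reflect how these obstructions are dispatched.

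Finally, the discharging rules are simpler than you anticipate: each supported pair $uv$ with $s$ parallel edges pulls $(6s-1)/2$ from each endpoint, and non-low vertices of $S_0^*$ pull $\frac12$ per incident edge to $\mathcal B_0$. The conclusion then follows from $\mathcal B_0$ being a nonempty forest, since each tree component contributes $-1$ (or $-\frac12$ if it contains the anchor vertex $x_0^*$), and this forces $\rho_h(G)\leq -1$ once the degenerate terminal cases are handled. Without the edge-block framing and the no-low-cycle lemma, the discharging you describe has no reason to close, so the missing idea is not a detail to be verified later but the load-bearing step.
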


We say that a pair $(G,h)$ is \emph{exceptional} if $G$ is a $4$-Ore graph on at most $10$ vertices, and $h(v) = 3$ for each $v \in V(G)$.
We note that 
when $G$ 
is a simple graph and $h(v) = 3$ for each $v \in V(G)$,
the statement $\rho_h
(G) \leq -1$ is equivalent to the statement $|E(G)| \geq \frac{8|V(G)| + 1}{5}$. As 
every $4$-Ore graph has $3s+1$ vertices for some integer $s \geq 1$,
Theorem~\ref{thm:stronger} implies Theorem~\ref{thethm}.

\subsection{Preliminaries and $4$-Ore graphs}


The following lemma is easily verified.
\begin{lemma}
\label{lem:cycle-cover}
Let $C$ be a cycle, and let $(H,L)$ be a $2$-cover of $C$ for which $C$ has no $(H,L)$-coloring.
    \begin{enumerate}
        \item If $|E(C)|$ is odd, then $H$ is isomorphic to two disjoint copies of $C$.
        \item If $|E(C)|$ is even, then $H$ is isomorphic to a single cycle of length $2|E(C)|$.
    \end{enumerate}
\end{lemma}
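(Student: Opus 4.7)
The plan is a three-stage reduction culminating in a direct case check based on the parities of the matchings in $H$. Write $n = |V(C)| = |E(C)|$ and enumerate $V(C) = \{v_1, \dots, v_n\}$ cyclically.

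First, I would show that every vertex of $H$ has degree exactly $2$. If some color $c \in L(v_i)$ has $d_H(c) \leq 1$, pick $c$ for $v_i$ and greedy-color the remaining path $C - v_i$, starting at the endpoint adjacent to $v_i$ where $c$'s unique neighbor (if any) lies; since every list has size at least $2$ and each subsequent step is obstructed by only the single matched neighbor of the previous choice, this succeeds, contradicting the hypothesis. Thus $H$ is $2$-regular, so the matching $E_H(L(v_i), L(v_{i+1}))$ is perfect for every $i$, and $|L(v_i)| = k$ for some constant $k \geq 2$. If $k \geq 3$, I can still color greedily: pick $x_1 \in L(v_1)$ arbitrarily; for $2 \leq i \leq n-1$ pick $x_i \in L(v_i)$ avoiding the unique matched neighbor of $x_{i-1}$, leaving $k-1 \geq 2$ choices; at $v_n$ avoid the (at most two) matched neighbors of $x_{n-1}$ and $x_1$, leaving at least $k - 2 \geq 1$ choice. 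Hence $k = 2$.

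Label $L(v_i) = \{a_i, b_i\}$. Each perfect matching between consecutive lists is either \emph{parallel} ($\{a_i a_{i+1}, b_i b_{i+1}\}$) or \emph{crossing} ($\{a_i b_{i+1}, b_i a_{i+1}\}$). Identifying each $x_i$ with a symbol in $\{a, b\}$, a valid coloring flips the symbol across every parallel matching and preserves it across every crossing matching, so it exists iff the number $p$ of parallel matchings is even; hence no coloring exists iff $p$ is odd. Meanwhile, tracing the orbit of $a_1$ through the matchings returns to $a_1$ after $n$ steps iff the number $q$ of crossing matchings is even, in which case $H$ is two disjoint copies of $C_n$, and otherwise $H$ is a single cycle of length $2n$. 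Since $p + q = n$, the condition $p$ odd yields $q$ even when $n$ is odd (so $H$ is two disjoint copies of $C$) and $q$ odd when $n$ is even (so $H \cong C_{2n}$), matching the two conclusions of the lemma.

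The main subtlety is in Stage 1, where the greedy walk along $C - v_i$ must be oriented so that the unique obstruction imposed by a degree-$\leq 1$ color $c$ is absorbed at the \emph{first} step and does not compound with the constraint propagated by the walk; once this orientation is fixed, the rest is routine parity accounting on the two kinds of matchings.
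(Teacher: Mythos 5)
The paper states this lemma without proof (``the following lemma is easily verified''), so there is no paper proof to compare against; your argument is a correct and complete verification. Stage 1 (degree of every color is exactly 2, hence all matchings are perfect and all lists have size 2) is sound: a color of degree $\leq 1$ lets you start the greedy walk at the endpoint of the path $C - v_i$ that carries its unique obstruction, and 2-regularity forces equal list sizes, which in turn must be $2$ since $k\geq 3$ survives the wrap-around check at $v_n$. The parity bookkeeping in the remaining stages is also right: a coloring must flip the $\{a,b\}$-symbol across each parallel matching, so colorability is equivalent to $p$ even, while the $H$-orbit of $a_1$ flips across each crossing matching, so the orbit closes after $n$ steps iff $q$ is even, and $p+q=n$ ties the two parities to $|E(C)|$ exactly as the lemma claims. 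One cosmetic remark: you tacitly use that $C$ is a simple cycle of length $\geq 3$ (so that $C - v_i$ is a genuine path with two distinct endpoints); this is the setting in which the paper applies the lemma, but it is worth saying explicitly since elsewhere in the paper the ambient graph is a multigraph and ``cycle'' occasionally includes $2$-cycles.
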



The unique $4$-Ore graph on $4$ vertices is $K_4$, and the unique Ore graph on $7$ vertices is called the \emph{Moser spindle}.
Every $4$-Ore graph $F$ is $4$-critical, and if $|V(F)| = 3s+1$, then $|E(F)| =
5s+1$.

We now establish some useful properties of $4$-Ore graphs.

\begin{obs}\label{obs:ore-potential}
    If $F$ is a $4$-Ore graph on $3s+1$ vertices,  then $\rho_{{\bf 3}}(F) = 3 - s$.
\end{obs}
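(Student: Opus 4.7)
The plan is a direct arithmetic calculation from the definition. Since every $4$-Ore graph $F$ is a simple graph, for each edge $xy \in E(F)$ we have $|E_F(x,y)| = 1$, so $\rho_{F,\mathbf{3}}(xy) = 1 - 6 = -5$, while non-adjacent pairs contribute $0$. Every vertex contributes $\rho_{F,\mathbf{3}}(v) = 8$ because $h \equiv 3$. Hence
\[
\rho_{\mathbf{3}}(F) \;=\; 8\,|V(F)| \;-\; 5\,|E(F)|.
\]
Plugging in the identities $|V(F)| = 3s+1$ and $|E(F)| = 5s+1$ (stated just above the observation) yields $\rho_{\mathbf{3}}(F) = 8(3s+1) - 5(5s+1) = 24s + 8 - 25s - 5 = 3 - s$, which is the claim.

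If a self-contained justification of the vertex and edge counts is desired, I would proceed by induction on the number of DHGO-compositions used to build $F$, equivalently on $s$. The base case is $F = K_4$, for which $s = 1$ with $|V(F)| = 4 = 3(1)+1$ and $|E(F)| = 6 = 5(1)+1$. For the inductive step, suppose $F$ is a DHGO-composition of $F_1$ and $F_2$, where $|V(F_i)| = 3s_i + 1$ and $|E(F_i)| = 5s_i+1$. The composition deletes one edge from $F_1$, splits a vertex of $F_2$ (adding one vertex and preserving the edge count), and then identifies two pairs of vertices (removing two vertices and preserving edges). Therefore
\[
|V(F)| = (3s_1+1) + (3s_2+1) + 1 - 2 = 3(s_1 + s_2) + 1,
\]
\[
|E(F)| = (5s_1+1 - 1) + (5s_2 + 1) = 5(s_1 + s_2) + 1,
\]
so $F$ has parameter $s = s_1 + s_2$, completing the induction.

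There is essentially no obstacle here beyond careful bookkeeping in the DHGO-composition step, and even that can be bypassed by directly citing the vertex and edge counts already stated in the paper. The whole observation reduces to plugging two integers into a linear expression.
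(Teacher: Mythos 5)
Your proof is correct and matches the paper's: both simply compute $\rho_{\mathbf 3}(F)=8(3s+1)-5(5s+1)=3-s$ using the stated vertex and edge counts for $4$-Ore graphs. The optional induction verifying $|V(F)|=3s+1$ and $|E(F)|=5s+1$ is a harmless extra; the paper takes those counts as given.
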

\begin{proof}
    If $F$ has $3s+1$ vertices, then $F$ has $5s+1$ edges, so $\rho_{{\bf 3}}(F) = 8(3s+1) - 5(5s+1) = 3-s$.
\end{proof}

The following lemma follows directly from~\cite[Claim 16]{2014KoYa} by setting $k = 4$.
Given a multigraph $F$ and a subset $A \subseteq V(F)$, we write $\|A\| = |E(F[A])|$.
\begin{lemma}[Kostochka and Yancey~\cite{2014KoYa}]\label{lem:KY}
For every $4$-Ore graph $F$ and nonempty  $A\subsetneq V(F)$, 
\begin{equation}
\label{eqn:KY}
    5|A| - 3 \|A\| \geq 5.
\end{equation}
\end{lemma}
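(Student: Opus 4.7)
The plan is to prove the lemma by induction on $|V(F)|$, following the recursive definition of $4$-Ore graphs. Write $\phi(B) := 5|B| - 3\|B\|$ for the quantity to bound. Since $|V(F)| = 3s+1$ and $|E(F)| = 5s+1$, the ``global'' value is $\phi(V(F)) = 2$, so the lemma asserts that restricting to any proper nonempty $A$ raises $\phi$ by at least $3$. For the base case $F = K_4$, I would just check directly: for $|A| = 1,2,3$ one has $\phi(A) = 5, 7, 6$ respectively.

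For the inductive step, write $F$ as a DHGO-composition of smaller $4$-Ore graphs $F_1$ and $F_2$: delete an edge $xy$ from $F_1$, split some $z \in V(F_2)$ into $z_1, z_2$ along a partition $N_{F_2}(z) = U_1 \sqcup U_2$ with $U_1, U_2 \neq \emptyset$, and identify $z_1, z_2$ with $x, y$ respectively. Given nonempty $A \subsetneq V(F)$, I would decompose it as $A_1 := A \cap V(F_1)$ (treating $x, y$ as lying in $V(F_1)$) and $A_2 \subseteq V(F_2)$, defined to contain $z$ exactly when $A$ meets $\{x, y\}$. A short bookkeeping yields $|A| = |A_1| + |A_2| - \mathbf{1}[A \cap \{x,y\} \neq \emptyset]$ and $\|A\|_F = \|A_1\|_{F_1} - \mathbf{1}[\{x,y\} \subseteq A] + \|A_2\|_{F_2} - d$, where $d$ counts the edges of $F_2[A_2]$ at $z$ that are lost in $F$ because the corresponding $z_i$ is not in $A$. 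Substituting gives the identity
\[
\phi_F(A) = \phi_{F_1}(A_1) + \phi_{F_2}(A_2) - 5\cdot\mathbf{1}[A \cap \{x,y\} \neq \emptyset] + 3\cdot\mathbf{1}[\{x,y\} \subseteq A] + 3d.
\]

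I would then split by $|A \cap \{x,y\}|$. If $A \cap \{x,y\} = \emptyset$, the identity collapses to $\phi_{F_1}(A_1) + \phi_{F_2}(A_2)$, and both $A_1 \subsetneq V(F_1)$ (avoids $x, y$) and $A_2 \subsetneq V(F_2)$ (avoids $z$), so each nonzero summand is $\geq 5$ by the inductive hypothesis. If $|A \cap \{x,y\}| = 2$, the identity becomes $\phi_{F_1}(A_1) + \phi_{F_2}(A_2) - 2$, and not both of $A_i = V(F_i)$ can hold (else $A = V(F)$), so the sum is at least $5 + 2 - 2 = 5$. The main obstacle is the middle case $|A \cap \{x,y\}| = 1$: the identity reads $\phi_{F_1}(A_1) + \phi_{F_2}(A_2) - 5 + 3d$, with $A_1 \subsetneq V(F_1)$ automatic but $A_2$ possibly equal to $V(F_2)$. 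If $A_2 \subsetneq V(F_2)$, both terms are $\geq 5$ by induction and the bound $\phi_F(A) \geq 5$ follows. If $A_2 = V(F_2)$, then $\phi_{F_2}(A_2) = 2$, so I need $d \geq 1$; this is guaranteed exactly because the DHGO-composition requires $U_1, U_2 \neq \emptyset$, and $A_2 = V(F_2)$ forces $A \supseteq N_{F_2}(z) \setminus \{z\}$, so at least one edge incident to $z$ in $F_2$ is routed to the excluded $z_i$. This yields $\phi_F(A) \geq 2 - 5 + 3 + 5 = 5$, closing the case. This sub-case is the only place where the ``nonempty split partition'' requirement of the DHGO-composition is used essentially.
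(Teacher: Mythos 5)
The paper does not prove this lemma itself; it cites it directly from Kostochka and Yancey (Claim~16 of~\cite{2014KoYa} with $k=4$), so there is no in-paper argument to compare against. Your inductive proof over the DHGO recursion is correct and self-contained. The base case $K_4$ checks out ($\phi=5,7,6$ for $|A|=1,2,3$), the bookkeeping identity
$\phi_F(A)=\phi_{F_1}(A_1)+\phi_{F_2}(A_2)-5\cdot\mathbf{1}[A\cap\{x,y\}\neq\emptyset]+3\cdot\mathbf{1}[\{x,y\}\subseteq A]+3d$
is accurate (the term $-\mathbf{1}[\{x,y\}\subseteq A]$ accounts for the deleted edge $xy$, and $d$ accounts for $F_2$-edges at $z$ whose endpoint $z_1$ or $z_2$ is excluded from $A$), and the case split on $|A\cap\{x,y\}|$ is exhaustive. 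The delicate sub-case $|A\cap\{x,y\}|=1$ with $A_2=V(F_2)$ is handled correctly: since $F_2$ is loopless, $U_2\subseteq V(F_2)\setminus\{z\}=A'$, so $d=|U_2|\geq 1$ precisely because the DHGO split requires both $U_1,U_2$ nonempty, and $\phi_F(A)\geq 5+2-5+3=5$. You also correctly note that $A_1$ automatically avoids the other of $x,y$ (hence is proper) and that not both $A_i$ can equal $V(F_i)$ when $A\subsetneq V(F)$. This is essentially the Kostochka--Yancey argument specialized to $k=4$, and works as a valid substitute for the citation.
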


\begin{lemma}\label{i2}
    If $F$ is a $4$-Ore graph, 
    then  every nonempty $A\subsetneq V(F)$ satisfies $\rho_{F,{\mathbf 3}}(A) \geq \rho_{\mathbf 3}(F)+6$.
\end{lemma}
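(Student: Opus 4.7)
The plan is to rewrite the desired inequality in a form that combines the edge-count bound from Lemma \ref{lem:KY} with the minimum-degree condition $\delta(F) \geq 3$, which holds since every $4$-Ore graph is $4$-critical. Since $F$ is simple and $h \equiv \mathbf{3}$, one has $\rho_{F,\mathbf{3}}(X) = 8|X| - 5\|X\|$ for every $X \subseteq V(F)$. Writing $B := V(F) \setminus A$, a direct computation gives
$$\rho_{F,\mathbf{3}}(A) - \rho_{\mathbf{3}}(F) = 5\,|E_F(A, B)| + 5\|B\| - 8|B|,$$
so the task reduces to showing that this quantity is at least $6$ for every nonempty $A \subsetneq V(F)$.

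First I would dispose of the bulk case $|B| \geq 3$ by applying Lemma \ref{lem:KY} to $A$: from $5|A| - 3\|A\| \geq 5$ one obtains $\|A\| \leq (5|A| - 5)/3$, and therefore
$$\rho_{F,\mathbf{3}}(A) \;\geq\; 8|A| - \tfrac{5}{3}(5|A| - 5) \;=\; \tfrac{25 - |A|}{3}.$$
Since $|V(F)| = 3s+1$, the assumption $|B| \geq 3$ gives $|A| \leq 3s - 2$, so the right-hand side is at least $9 - s = \rho_{\mathbf{3}}(F) + 6$ by Observation \ref{obs:ore-potential}.

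The remaining cases $|B| \in \{1, 2\}$ I would handle directly using $\delta(F) \geq 3$. If $B = \{v\}$, then the expression equals $5\,d_F(v) - 8 \geq 7$. If $B = \{u, v\}$ with $uv \notin E(F)$, it equals $5(d_F(u) + d_F(v)) - 16 \geq 14$. If $B = \{u, v\}$ with $uv \in E(F)$, the edge $uv$ contributes $+5$ through $5\|B\|$ and $-10$ through $5\,|E_F(A, B)|$, so the expression equals $5(d_F(u) + d_F(v)) - 21 \geq 9$. The only mild obstacle is the tightness at $|A| = 3s - 2$ in the bulk case, where Lemma \ref{lem:KY} is attained with equality and so must be applied directly (no looser estimate suffices); the small cases are essentially bookkeeping with the minimum degree of $F$.
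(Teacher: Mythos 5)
Your proof is correct. The main range $|B|\geq 3$ (equivalently $|A|\leq 3s-2$) is handled exactly as in the paper, via Lemma~\ref{lem:KY} and the computation $\rho_{F,\mathbf 3}(A)\geq \frac{25-|A|}{3}\geq 9-s=\rho_{\mathbf 3}(F)+6$. Where you diverge is at the two boundary cases $|B|\in\{1,2\}$: the paper stays with Lemma~\ref{lem:KY} and wins by integrality of $\|A\|$ (for $|A|=3s$ one gets $\|A\|\leq\lfloor 5s-\tfrac53\rfloor=5s-2$, giving $\rho\geq 10-s$, and for $|A|=3s-1$ one gets $\|A\|\leq 5s-4$, giving $\rho\geq 12-s$), while you instead rewrite $\rho_{F,\mathbf 3}(A)-\rho_{\mathbf 3}(F)=5\,|E_F(A,B)|+5\|B\|-8|B|$ and invoke the minimum-degree bound $\delta(F)\geq 3$ (valid since every $4$-Ore graph is $4$-critical, as stated just before Observation~\ref{obs:ore-potential}). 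Your arithmetic in all three small subcases checks out ($5d_F(v)-8\geq 7$; $5(d_F(u)+d_F(v))-16\geq 14$; $5(d_F(u)+d_F(v))-21\geq 9$). The trade-off is modest: the paper's route is more uniform (one lemma throughout) but relies on a floor computation, whereas yours trades the floor argument for the standard minimum-degree fact about critical graphs and a cleaner identity for $\rho(A)-\rho(F)$. Either way the content is the same; neither proof is materially shorter or more general than the other.
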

\begin{proof}
    Let $F$ be a $4$-Ore graph on $3s+1$ vertices.
    By Observation~\ref{obs:ore-potential}, $\rho_{\mathbf 3}(F) = 3-s$,
    so $\rho_{\mathbf 3}(F)+6=9-s$. Hence, we aim to show that $\rho_{F,h}(A) \geq 9-s$.
     We frequently use  the fact that  by~(\ref{eqn:KY}), $\|A\| \leq \frac 13 (5|A| - 5)$.
    
    If $|A|=3s$, then 
    $\|A\| \leq 5s-2$, so
    $\rho_{F,h}(A)\geq 8 (3s)- 5(5s-2)=10-s> 9 - s$.
    If $|A|=3s-1$, then 
    $\|A\| \leq 5s-4$, so
    $\rho_{F,h}(A)\geq 8 (3s-1)-5 (5s-4)=12- s>9-s$.
    Now suppose $|A|\leq 3s-2$, so that
    $-|A|\geq {-(3s-2)}$.
    Then,
    $$\rho_{F,h}(A)=8|A|-5 \|A\|
    \geq 8|A|+ \frac{5(5-5|A|)}{3}
    ={-|A|+25\over 3}
    \geq {-(3s-2) +25 \over 3}
    ={-3s+27 \over 3}
    =9 - s.$$
\end{proof}

Given a multigraph $F$, let $F^-$ denote a multigraph obtained from $F$ by removing any edge. 

\begin{lemma}\label{ore-47}
    Let $F$ be a copy of the Moser spindle, and let $xy\in E(F)$. 
    Suppose $F'$ is formed from $F-xy$ by adding a $3$-cycle $x'y'z'$  and a matching $xx', yy', zz'$ where $z\in V(F)$. If $z$ does not belong to $N(x) \cup N(y)$,
    then $F'$ is either 4-Ore or DP 3-colorable. 
\end{lemma}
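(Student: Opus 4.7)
The plan is to suppose toward contradiction that $F'$ is not DP $3$-colorable, and derive a contradiction.  Let $(H,L)$ be a $3$-cover of $F'$ without an $(H,L)$-coloring; after relabeling we may assume $L(v)=\{1,2,3\}$ for each $v$.  Since $F$ is $4$-Ore and hence $4$-critical, the subgraph $F-xy$ is DP $3$-colorable; fix any $(H,L)$-coloring $\phi$ of $F-xy$.  Extending $\phi$ to $F'$ amounts to DP-coloring the triangle $x'y'z'$ in the restricted $2$-cover with $L'(v')=L(v')\setminus\{M_{vv'}(\phi(v))\}$, and by Lemma~\ref{lem:cycle-cover} this extension fails exactly when the restricted $6$-vertex cover consists of two vertex-disjoint triangles.

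The next step is a short structural analysis showing this obstruction is rigid.  For the restricted cover to be $2$-regular, all three restricted matchings must be full, which (writing $(\alpha,\beta,\gamma)=(M_{xx'}(\phi(x)), M_{yy'}(\phi(y)), M_{zz'}(\phi(z)))$) requires $M_{x'y'}(\alpha)=\beta$, $M_{y'z'}(\beta)=\gamma$, and $M_{x'z'}(\alpha)=\gamma$.  When these hold, the restricted cover is either a $C_6$ or two $K_3$'s, and the latter occurs iff both elements of $L'(x')=\{1,2,3\}\setminus\{\alpha\}$ are fixed points of the permutation $M_{x'z'}^{-1}\circ M_{y'z'}\circ M_{x'y'}$ in $S_3$.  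Since two distinct permutations in $S_3$ agree on at most one point, ``both fixed'' forces this composition to equal the identity.  Consequently, either the composition is non-trivial and no $\phi$ hits the obstruction (so every $\phi$ extends, and $F'$ is DP $3$-colorable), or the composition is the identity, in which case the set $B$ of bad triples $(\phi(x),\phi(y),\phi(z))$ is $\{(M_{xx'}^{-1}(k), M_{yy'}^{-1}(M_{x'y'}(k)), M_{zz'}^{-1}(M_{x'z'}(k))):k\in\{1,2,3\}\}$ -- a set of exactly $3$ triples whose $x$-, $y$-, $z$-coordinates each exhaust $\{1,2,3\}$.

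In the latter case it suffices to exhibit an $(H,L)$-coloring of $F-xy$ whose induced triple is outside $B$.  Given any $\phi$ with $(\phi(x),\phi(y),\phi(z))\in B$, the key observation is that escaping $B$ is possible by changing only $\phi(z)$: since $B$ contains a unique triple with any fixed $(\phi(x),\phi(y))$-pair, any $\phi'$ agreeing with $\phi$ on $\{x,y\}$ and differing at $z$ escapes $B$.  To construct such a $\phi'$, I propose a Kempe-chain swap on colors $\{\phi(z),c\}$ for some $c\in\{1,2,3\}\setminus\{\phi(z)\}$, performed in a component of the two-color subgraph of $F-xy$ containing $z$ but avoiding $\{x,y\}$.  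The hypothesis $z\notin N(x)\cup N(y)$, combined with the two-rhombus structure of the Moser spindle (two copies of $K_4-e$ sharing an apex and joined by a bridge edge), guarantees such a component for at least one choice of $c$.

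The main obstacle will be rigorously establishing the Kempe-chain swap across all cases.  The automorphism group of the Moser spindle gives only four edge orbits for $xy$ and three vertex orbits for $z\in V(F)\setminus(\{x,y\}\cup N(x)\cup N(y))$, so the case analysis is small; however, the DP-matching structure can twist colors along the chain, so one must check in each case that a valid swap exists.  The most delicate case is likely when $xy$ is the bridge edge $cf$, where the two rhombi interact directly and the choice of $c$ must be made with care using the rhombi's internal $3$-coloring structure.
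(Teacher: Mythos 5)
Your proposal has a fundamental structural problem: you open by ``suppose toward contradiction that $F'$ is not DP $3$-colorable,'' aiming to always produce an $(H,L)$-coloring of $F'$. But the lemma's conclusion is a disjunction, and the $4$-Ore alternative is not vacuous. When $x$ is the unique degree-$4$ vertex of the Moser spindle (forcing $z$ to be the opposite tip), or when $xy$ is the unique edge of $F$ not contained in a triangle (forcing $z$ to be the degree-$4$ vertex), the resulting $F'$ is a DHGO-composition of $K_4$ with $F$, hence $4$-Ore and therefore $4$-critical and \emph{not} DP $3$-colorable. Your Kempe-swap step would thus have to fail systematically in exactly these two configurations, and then you would still owe a separate argument that $F'$ is $4$-Ore there --- but your proposal never contemplates the $4$-Ore branch. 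As written, a successful execution of your plan would prove something false.

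Beyond that, the Kempe-chain step is a genuine gap even in the configurations where $F'$ really is DP $3$-colorable. You correctly observe that when $M_{x'z'}^{-1}\circ M_{y'z'}\circ M_{x'y'}=\mathrm{id}$, the set $B$ of bad triples is a Latin set of size $3$, and that escaping $B$ requires changing only $\phi(z)$. But in DP-coloring there is no canonical ``two-color subgraph'': whether an alternating walk closes up or whether a swap stays consistent depends on the composition of matchings around every cycle of the Moser spindle it traverses, and these compositions are not controlled by the hypotheses. Asserting that a suitable component exists ``for at least one choice of $c$'' is precisely the content that would need proof, and you flag it yourself as the main obstacle.

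For comparison, the paper's proof is a short case analysis on the edge orbit of $xy$. When $xy$ lies in a triangle whose three vertices all have degree $3$ in $F$, the six vertices $\{x,y,w,x',y',z'\}$ induce a theta-graph $\Theta$ in $F'$ in which every vertex has degree $3$; since $\Theta$ is $2$-connected and not a cycle or clique, Theorem~\ref{thm:deg-choosable} applies and any coloring of $F'-V(\Theta)$ extends. In the remaining two cases ($x$ is the degree-$4$ vertex, or $xy$ is the bridge edge) the paper identifies $F'$ explicitly as a DHGO-composition of $K_4$ and $F$, hence $4$-Ore. No coloring search or swap is needed, and the $4$-Ore alternative is handled affirmatively rather than ignored.
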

\begin{proof}
    As $F$ is isomorphic to the Moser spindle, $F$ is a $4$-Ore graph on seven vertices.
    Fix a 3-cover $(H,L)$ of $F'$.
    \begin{enumerate}[(1)]
        \item If $xy$ is contained in a $3$-cycle $xyw$ in $F$ and $d_F(x) = d_F(y)  =d_F(w)= 3$, then
        as $z \not \in N(x) \cup N(y)$, $z \neq w$.
        Therefore,
        $\Theta:=F'[\{x,y,w,x',y',z'\}]$ is  an induced subgraph         
        isomorphic to a $6$-cycle with one chord,
        and every vertex of $\Theta$ has degree $3$ in $F'$.
        By Theorem~\ref{thm:deg-choosable},
        $F - V(\Theta)$ has an $(H,L)$-coloring $f$, and $f$ can be extended to $\Theta$ by Theorem~\ref{thm:deg-choosable}. 
        Therefore, $F'$ is DP $3$-colorable.

\begin{figure}

\begin{center}
\begin{tikzpicture}
[xscale=2,auto=left, 
blacknode/.style={circle,draw,fill=black,minimum size = 6pt,inner sep=0pt}, 
graynode/.style={circle,draw,fill=gray!30,minimum size = 6pt,inner sep=0pt}
]
\begin{scope}[xshift=0cm]\picpicd\end{scope} 
 \begin{scope}[xshift=3cm]\picpicg\end{scope}
\end{tikzpicture}
\end{center}
\caption{
Two $4$-Ore graphs $F'$ that appear in the proof of Lemma~\ref{ore-47}. The light vertices form a Moser spindle minus an edge, and the dark vertices form the $3$-cycle $x'y'z'$ that we add to form $F'$. 
The thick edges show the $K^-_4$ that is composed with a Moser spindle to form the graph.}
\label{fig:4-cases}
\end{figure}
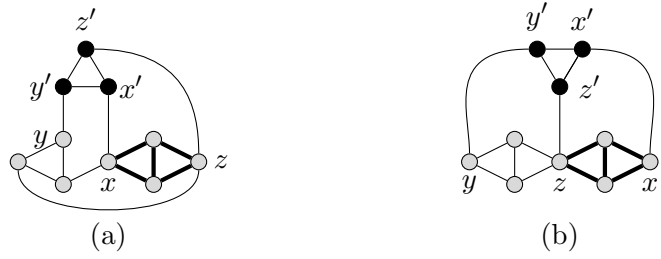

    \item If $x$ is the unique vertex of degree $4$ in $F$, then
    as $z \not \in N(x) \cup N(y)$, 
    $F'$ is isomorphic to Figure~\ref{fig:4-cases}~(a).
    Then,    $F'$     is obtained from a DHGO-composition of $K_4$ and the Moser spindle and hence is a $4$-Ore graph.

    \item 
    If $xy$ is the unique edge in $F$ that does not belong to a $3$-cycle, 
    then as $z \not \in N(x) \cup N(y)$,
    $F'$ is isomorphic to the graph in Figure~\ref{fig:4-cases}~(b). Then,
    $F'$ is obtained from a DHGO-composition of $K_4$ and the Moser spindle and hence is a $4$-Ore graph.
    \end{enumerate}

    This completes the proof.
\end{proof}

\section{Properties of a minimum counterexample}
\label{sec:propG}

For the rest of the paper, we fix a counterexample $G$ to Theorem~\ref{thm:stronger} for which $|V(G)|$ is minimum.
We also fix an $h$-cover $(H,L)$ of $G$ such that $G$ does not have an $(H,L)$-coloring.
For every $uv\in E(G)$, we assume $E_H(L(u), L(v))$ is the union of $|E_G(u, v)|$ maximal matchings.
We write $L(v) = \{1_v, \dots, t_v\}$, where $t = |L(v)|$.
For $A\subseteq V(G)$, we abuse notation and write $(H, L\vert_{A})$ for the cover $(H[L(A)], L\vert_{A})$.

\subsection{General observations}

\begin{obs}
\label{obs:zero}
    $G$ has no vertex $v$ for which $h(v) = 0$. 
\end{obs}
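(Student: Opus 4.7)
The plan is to contradict the $h$-minimality of $G$ by exhibiting a proper subgraph $G' \subsetneq G$ that fails to be DP $h|_{V(G')}$-colorable, whenever some vertex $v \in V(G)$ has $h(v) = 0$.

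First I would dispose of the one-vertex case separately: if $V(G) = \{v\}$, then $\rho_h(G) = \rho_{G,h}(v) = -1$, so conclusion~2 of Theorem~\ref{thm:stronger} already holds for $G$, contradicting the choice of $G$ as a counterexample. Henceforth I may assume $|V(G)| \geq 2$.

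For the main case, I would pick any vertex $u \in V(G) \setminus \{v\}$ and set $G' := G - u$. Then I would exhibit an ``uncolorable'' $h|_{V(G')}$-cover $(H', L')$ of $G'$ in the most direct way: declare $L'(v) = \emptyset$ (permitted because $h(v) = 0$), give each remaining vertex $w \in V(G') \setminus \{v\}$ an arbitrary palette of size $h(w)$, and take every matching $E_{H'}(L'(w_1), L'(w_2))$ prescribed by an edge of $G'$ to be empty (a choice permitted by the cover definition, in which the component matchings are allowed to be empty). Since $L'(v) = \emptyset$, no $(H', L')$-coloring of $G'$ can exist, so $G'$ is not DP $h|_{V(G')}$-colorable, contradicting the $h$-minimality of $G$.

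I do not foresee any real obstacle. The only point requiring care is recognizing that the single-vertex case does not admit the covering argument and must be dispatched via the potential computation; every case with $|V(G)| \geq 2$ is then resolved by the one-line construction of the witness cover on $G - u$ with an empty list at $v$.
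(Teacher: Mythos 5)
Your proof is correct and takes essentially the same approach as the paper: dispatch $|V(G)|=1$ via the potential computation, then contradict $h$-minimality by exhibiting a proper subgraph that is not DP $h$-colorable. The only cosmetic difference is the choice of witness subgraph — you use $G-u$ for an arbitrary $u\ne v$ and build an explicit cover with $L'(v)=\emptyset$, whereas the paper more economically takes $G[\{v\}]$, the single-vertex subgraph, which is trivially not DP $h$-colorable when $h(v)=0$.
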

\begin{proof}
    If $V(G) = \{v\}$, then $\rho_h(G) = \rho_h(v) = -1$, so $G$ is not a counterexample. If $|V(G)| \geq 2$, then $G[\{v\}]$ is a proper subgraph of $G$ that is not DP $h$-colorable, contradicting the assumption that $G$ is $h$-minimal.
\end{proof}

\begin{obs}
\label{obs:reducible}
    For each induced subgraph $X \subseteq G$, $X$ is not DP $(h-d_G + d_X)$-colorable.
\end{obs}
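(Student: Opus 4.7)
The plan is to argue by contradiction. Suppose that some induced subgraph $X$ of $G$ is DP $(h - d_G + d_X)$-colorable; we will produce an $(H,L)$-coloring of $G$, contradicting our standing choice of $(H,L)$.

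First, we may assume that $X \neq G$, since if $X = G$ then $h - d_G + d_X = h$, and the hypothesis would directly contradict the fact that $G$ has no $(H,L)$-coloring. Let $Y := V(G) \setminus V(X)$, which is nonempty. Then $G[Y]$ is a proper subgraph of $G$, so by the $h$-minimality of $G$ it admits an $(H[L(Y)], L|_Y)$-coloring $f$.

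Next, we use $f$ to cut down the cover of $X$. For each $v \in V(X)$, let $L'(v) \subseteq L(v)$ be obtained by deleting every color of $L(v)$ that is $H$-adjacent to $f(u)$ for some $u \in Y$. Since the edges from $v$ to $Y$ (counted with multiplicity) number exactly $d_G(v) - d_X(v)$, and each such edge contributes a single matching in $H$ that can forbid at most one color of $L(v)$, we obtain $|L'(v)| \geq h(v) - d_G(v) + d_X(v)$. Setting $H' = H[\bigcup_{v \in V(X)} L'(v)]$, the pair $(H', L')$ is a DP cover of $X$ meeting the required list-size condition, so by hypothesis $X$ admits an $(H',L')$-coloring $g$. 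The union $f \cup g$ is then an $(H,L)$-coloring of $G$: edges within $Y$ are handled by $f$, edges within $X$ by $g$ (since $H'$ is an induced subgraph of $H$), and cross edges by the very construction of $L'$.

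The main obstacle, such as it is, lies in the list-size counting: one needs the cover axiom that an edge of $G$ of multiplicity $s$ corresponds to $s$ matchings in $H$, so each neighbor $u \in Y$ of $v$ blocks at most $|E_G(u,v)|$ colors of $L(v)$, and these contributions sum to $d_G(v) - d_X(v)$. Beyond this, the argument is a pure "restrict the cover by fixing colors outside $X$" reduction, and does not use any further structural properties of $G$.
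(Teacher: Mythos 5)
Your proof is correct and follows essentially the same approach as the paper: color $G - V(X)$ using $h$-minimality, restrict the lists on $X$ by removing colors adjacent to the fixed coloring, count the blocked colors via cross-edge multiplicities, and conclude. The only cosmetic difference is that you phrase it as a contradiction and explicitly separate the $X=G$ case, whereas the paper argues directly, but the substance is identical.
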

\begin{proof}
    Let $(H,L)$ be an $h$-cover of $G$ for which $G$ admits no $(H,L)$-coloring. As $G$ is $h$-minimal, $G - V(X)$ has an $(H,L)$-coloring $f$. 
    Now, 
    for each $x \in V(X)$, let $L'(x) \subseteq L(x)$ consist of the colors in $L(x)$ that are not adjacent to a color of $f$. As $G$ has no $(H,L)$-coloring,  $f$ cannot be extended to $X$; therefore, $X$ has no $(H[L'(V(X))],L')$-coloring. As $|L'(x)| \geq h(x) - d_G(x) + d_X(x) $ for each $x \in V(X)$,  $X$ is not DP $(h-d_G+d_X)$-colorable.
\end{proof}

The following lemma shows that our potential function is submodular;
 for a proof, see e.g.~\cite[Lemma 3.1]{2022JKMX}.

\begin{lemma}
\label{lem:submodularity}
    If $U_1, U_2 \subseteq V(G)$, then
    \[\rho_{G,h}(U_1 \cup U_2) + \rho_{G,h}(U_1 \cap U_2) = \rho_{G,h}(U_1) + \rho_{G,h}(U_2) + \rho_{G,h}(E'),\]
    where $E'=E_G(U_1 \setminus U_2, U_2 \setminus U_1)$. 
    As a corollary, 
    \[\rho_{G,h}(U_1 \cup U_2) + \rho_{G,h}(U_1 \cap U_2) \leq \rho_{G,h}(U_1) + \rho_{G,h}(U_2).\]
\end{lemma}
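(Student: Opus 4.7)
The plan is to verify the displayed equality by a direct accounting argument, splitting $\rho_{G,h}$ into its vertex contribution and its edge-pair contribution, and then to deduce the corollary from the nonpositivity of the edge-pair term. Both sides of the identity decompose term by term, so I would check vertex and edge-pair contributions separately.

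For the vertex part, each vertex $v \in V(G)$ contributes $\rho_{G,h}(v)$ weighted by the total number of sets among $\{U_1 \cup U_2,\, U_1 \cap U_2\}$ that contain it on the left, and weighted by the total number of sets among $\{U_1,\, U_2\}$ that contain it on the right. Both of these totals equal $|\{i \in \{1,2\} : v \in U_i\}|$: if $v$ lies in both $U_1$ and $U_2$ it is counted twice on each side, if it lies in exactly one of them it is counted once on each side (contributing to $U_1 \cup U_2$ on the left), and otherwise it is not counted at all. So the vertex parts cancel exactly, yielding no residual term.

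For the edge-pair part, I fix a pair $xy \in \binom{V(G)}{2}$ and split according to which of the four regions $U_1 \cap U_2$, $U_1 \setminus U_2$, $U_2 \setminus U_1$, or $V(G) \setminus (U_1 \cup U_2)$ contains each endpoint. For each case I count how many of $\binom{U_1 \cup U_2}{2}$, $\binom{U_1 \cap U_2}{2}$, $\binom{U_1}{2}$, $\binom{U_2}{2}$ contain $\{x,y\}$. A quick check of the cases shows that these two counts agree in every situation except one: when exactly one endpoint lies in $U_1 \setminus U_2$ and the other in $U_2 \setminus U_1$, the pair contributes once to $\binom{U_1 \cup U_2}{2}$ on the left but to neither $\binom{U_1}{2}$ nor $\binom{U_2}{2}$ on the right. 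Since $\rho_{G,h}(xy) = 0$ whenever $|E_G(x,y)| = 0$, summing the exceptional contributions over all such pairs is the same as summing $\rho_{G,h}(xy)$ over pairs that meet $E' = E_G(U_1 \setminus U_2, U_2 \setminus U_1)$, which is exactly $\rho_{G,h}(E')$. This gives the displayed identity.

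For the corollary, I observe from the definition that $\rho_{G,h}(xy) \in \{0\} \cup \{1 - 6m : m \geq 1\}$, so $\rho_{G,h}(xy) \leq 0$ for every pair, and hence $\rho_{G,h}(E') \leq 0$. The argument is entirely combinatorial bookkeeping; the only place that requires any care is the four-region case analysis for edge pairs, which is the main (and fairly mild) obstacle.
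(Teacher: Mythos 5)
Your proof is correct and self-contained. The paper does not include its own argument (it cites an external reference for the proof), and your term-by-term accounting---splitting $\rho_{G,h}$ into the vertex sum and the edge-pair sum, verifying that the vertex contributions match exactly, and isolating the unique mismatched case where one endpoint lies in $U_1 \setminus U_2$ and the other in $U_2 \setminus U_1$---is the standard direct verification and handles all cases correctly, with the corollary following from $\rho_{G,h}(xy) \leq 0$ for every pair.
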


\begin{obs}
\label{obs:list-degree}
    For each vertex $v \in V(G)$, $h(v) \leq d_G(v)$.
\end{obs}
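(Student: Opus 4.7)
The plan is to argue by contradiction: suppose some $v \in V(G)$ has $h(v) \geq d_G(v) + 1$, and show that the $(H,L)$-coloring of $G$ that we assumed did not exist can in fact be constructed by extending a coloring of $G - v$.

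First I would invoke $h$-minimality applied to the proper subgraph $G - v$. The restricted cover $(H[L(V(G)\setminus\{v\})],\, L|_{V(G)\setminus\{v\}})$ is still an $h|_{V(G)\setminus\{v\}}$-cover, so $G-v$ admits an $(H,L)$-coloring $f$. The second step is to count the colors in $L(v)$ that $f$ forbids. For each neighbor $u$ of $v$, the edges of $H$ between $L(u)$ and $L(v)$ form $|E_G(u,v)|$ matchings, each of which excludes at most one color of $L(v)$ once $f(u)$ is fixed. Summing over all neighbors yields at most $\sum_{u \in N_G(v)} |E_G(u,v)| = d_G(v)$ forbidden colors in $L(v)$.

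Finally, since $|L(v)| \geq h(v) > d_G(v)$, at least one color of $L(v)$ remains available. Assigning this color to $v$ extends $f$ to an $(H,L)$-coloring of $G$, contradicting our standing assumption that $G$ has no $(H,L)$-coloring. Hence $h(v) \leq d_G(v)$ for every $v \in V(G)$.

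There is essentially no obstacle here; the only thing one has to be slightly careful about is that $G$ is a (loopless) multigraph, so the bound on forbidden colors must use the edge multiplicities $|E_G(u,v)|$ rather than just $|N_G(v)|$, which is why the count yields exactly $d_G(v)$ rather than $|N_G(v)|$.
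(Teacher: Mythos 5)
Your proof is correct and is essentially the paper's argument in inlined form: the paper simply applies its already-established Observation~\ref{obs:reducible} to $X = G[\{v\}]$ (so $d_X(v)=0$ and a one-vertex graph with a nonempty list is trivially DP colorable, forcing $h(v)-d_G(v)\leq 0$), whereas you unroll that lemma's proof for this special case by extending an $(H,L)$-coloring of $G-v$ and counting forbidden colors. Your care with edge multiplicities — bounding forbidden colors by $\sum_{u}|E_G(u,v)| = d_G(v)$ rather than $|N_G(v)|$ — is exactly the point the general lemma handles, so the two arguments coincide.
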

\begin{proof}
    We let $X = G[\{v\}]$. 
    By Observation~\ref{obs:reducible}, $X$ is not DP $(h-d_G+d_X)$-colorable. This implies that $h(v) - d_G(v) + d_X(v) = h(v) - d(v) \leq 0$, completing the proof.
\end{proof}

We say that a vertex $v \in V(G)$ is \emph{low} if $h(v) = d(v)$.

\begin{lemma}
\label{lem:ERT}
    If $U \subseteq V(G)$ is a set of low vertices, then each block of $G[U]$ is isomorphic to $K_{s}^{t}$ or $C_{s}^{t}$, for some integers $s,t \geq 1$.
\end{lemma}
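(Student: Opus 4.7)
The plan is to reduce the claim to a single connected component and then invoke the two earlier tools: Observation~\ref{obs:reducible} and Theorem~\ref{thm:deg-choosable}. Since every block of $G[U]$ lies inside some connected component of $G[U]$, it suffices to show that each such component is a GDP-tree; its blocks are then, by definition of a GDP-tree, isomorphic to $K_s^t$ or $C_s^t$.

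Concretely, I would let $C$ be an arbitrary connected component of $G[U]$. Because $V(C) \subseteq U$ consists of low vertices and $C = G[V(C)]$ is an induced subgraph of $G$, I would apply Observation~\ref{obs:reducible} with $X = C$ to conclude that $C$ is not DP $(h - d_G + d_C)$-colorable. For each $v \in V(C)$, lowness gives $h(v) = d_G(v)$, so the effective list-size function $h(v) - d_G(v) + d_C(v)$ simplifies to $d_C(v)$. Hence $C$ is not DP degree-colorable, and Theorem~\ref{thm:deg-choosable} yields that $C$ is a GDP-tree, as required.

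I do not foresee any real obstacle. The only small point worth being careful about is that "not DP degree-colorable" for a disconnected multigraph only guarantees that \emph{some} component is a GDP-tree, not every component; this is exactly why the argument is carried out one component at a time rather than by applying Observation~\ref{obs:reducible} globally to $G[U]$. In spirit, the lemma is the natural DP-coloring analog of Theorem~\ref{Ga2} applied to the subgraph induced by vertices whose lists are as tight as possible, and the proof just makes that analogy precise.
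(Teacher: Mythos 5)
Your proof is correct, and it takes a genuinely different (though closely related) route from the paper's. The paper argues by contradiction at the level of a single bad block: it fixes a block $B$ of $G[U]$ that is neither $K_s^t$ nor $C_s^t$, uses $h$-minimality to get an $(H,L)$-coloring of $G - V(B)$, shrinks lists to those colors with no neighbor under that coloring to get $|L'(v)| \geq d_B(v)$ for all $v \in V(B)$ (using lowness), and then invokes the \emph{positive} direction of Theorem~\ref{thm:deg-choosable} (a block that is not $K_s^t$ or $C_s^t$ is 2-connected, hence is its own unique block, hence is not a GDP-tree, hence is DP degree-colorable) to extend the coloring and contradict $h$-minimality. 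You instead work at the level of a whole connected component $C$ of $G[U]$, apply the already-established Observation~\ref{obs:reducible} to conclude $C$ is not DP $(h - d_G + d_C)$-colorable, note that lowness reduces the relevant function to $d_C$, and invoke the \emph{negative} direction of Theorem~\ref{thm:deg-choosable} to conclude directly that $C$ is a GDP-tree and hence every block of $C$ has the claimed form. Both arguments rest on the same two ingredients; yours is a direct argument that reuses Observation~\ref{obs:reducible} rather than re-deriving the list-shrinking step inline, while the paper's contradiction argument needs the small extra step of noting that a non-$K_s^t$/$C_s^t$ block is not a GDP-tree. Your caution about applying Theorem~\ref{thm:deg-choosable} component-by-component (rather than to the possibly disconnected $G[U]$ all at once) is exactly right and is a subtlety worth making explicit.
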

\begin{proof}
    Suppose that $G[U]$ contains a block $B$ that is isomorphic to neither $K_{s}^t$ nor $C_{s}^{t}$.
    Let $(H,L)$ be an $h$-cover of $G$ for which $G$ has no $(H,L)$-coloring.
    As $G$ is $h$-minimal, $G \setminus V(B)$ has an $(H,L)$-coloring $f$. Now, for each $v \in V(B)$, let $L'(v) \subseteq L(v)$ consist of the colors in $L(v)$ with no neighbor in $f$. Each $v \in V(B)$ is low, so $|L'(v)| \geq d_{B}(v)$.
    Then, $B$ has an $(H,L')$-coloring by Theorem~\ref{thm:deg-choosable}, and thus $G$ is $(H,L)$-colorable, a contradiction.
\end{proof}

{ The following lemma establishes a lower bound on the potentials of subsets of $V(G)$ and is one of our main tools.}
For distinct vertices $u,v$, we say $(u, v)$ is a {\it simple pair} if $|E_G(u, v)|=1$ and is a {\it parallel pair} if $|E_G(u, v)|\geq 2$.
We also say $E_G(S, T)$ has $p$ simple (parallel) pairs if there are $p$ simple (parallel) pairs $(s, t)$ for which $s\in S$ and $t\in T$. 
Given a subset $S \subseteq V(G)$, we write $\overline S = V(G) \setminus S$.

\begin{lemma}
\label{lem:j(k-2)}
Let $i,j \geq { 0}$ and $i+j\geq1$, and let $S \subsetneq V(G)$.
If $E_G(S, \overline{S})$ has $i$ parallel pairs and $j$ simple pairs,
then $\rho_{G,h}(S) \geq 4i+j+1$.
\end{lemma}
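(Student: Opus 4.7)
My plan is to argue by contradiction: assume $\rho_{G,h}(S) \le 4i + j$. Applying Observation~\ref{obs:reducible} to the induced subgraph $X := G[S]$ gives that $X$ is not DP $h'$-colorable, where $h'(v) := \max\{0,\ h(v) - d_G(v, \overline S)\}$ for $v \in S$. I then extract an $h'$-minimal subgraph $G^* \subseteq X$. Since $|V(G^*)| \le |S| < |V(G)|$, the minimality of $G$ as a counterexample to Theorem~\ref{thm:stronger} implies that the theorem applies to $(G^*, h'|_{V(G^*)})$: either $(G^*, h')$ is exceptional (Case A), or $\rho_{G^*, h'}(V(G^*)) \le -1$ (Case B).

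In Case B, I set $T := V(G^*)$ and relate $\rho_{G^*, h'}(T)$ back to $\rho_{G,h}(S)$. The key identity is
\[
\rho_{G,h}(T) = \rho_{G^*, h'}(T) + \sum_{v \in T}\bigl(\rho(h(v)) - \rho(h'(v))\bigr) + \sum_{xy \in \binom{T}{2}}\bigl(\rho_G(xy) - \rho_{G^*}(xy)\bigr),
\]
where the first sum accounts for the gain from replacing $h'$ by $h$ on $T$ (at least $2$ per unit decrement of $h$, by inspection of the $\rho$-table on $\{0,1,2,3\}$; larger costs of $3$ or $4$ apply when $h(v) \in \{2,3\}$), and the second sum is nonpositive, vanishing precisely when $G^*$ is induced. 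I would then combine this with $\rho_{G, h}(S) = \rho_{G, h}(T) + \rho_{G, h}(S \setminus T) + \sum_{\mathrm{cross}} \rho_G(xy)$, using Lemma~\ref{lem:submodularity} to handle the cross term (or a recursive invocation of the lemma to the smaller set $S \setminus T$), and track edges from $T$ to $\overline S$ so as to attribute at least $4$ to each parallel pair and at least $1$ to each simple pair between $S$ and $\overline S$, plus a slack of $2$. Case A forces $h' \equiv 3$ and thus $h(v) = 3$ and $d_G(v, \overline S) = 0$ for every $v \in T$, so all $i+j$ pairs lie in $E_G(S \setminus T, \overline S)$; combined with Observation~\ref{obs:ore-potential} (giving $\rho_{\mathbf 3}(G^*) = 3 - s \ge 0$ for $s \le 3$) and Lemma~\ref{i2}, the analogous bookkeeping closes the argument.

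The main obstacle I foresee is controlling the direction of the inequalities in Case B. The bound $\rho_{G^*, h'}(T) \le -1$ is an upper bound, and the translation back to $\rho_{G,h}(T)$ mixes positive contributions (vertex-side, from $h \ge h'$) with nonpositive ones (edge-side, from $G^* \subseteq G[T]$); worse, when a vertex $v \in T$ has $d_G(v, \overline S) > h(v)$ the truncation of $h'$ at $0$ means excess edges at $v$ yield no additional vertex-side gain, breaking a naive ``$2$ per edge'' estimate. I expect to resolve both issues together: either by arguing that the choice of coloring of $G[\overline S]$ from Observation~\ref{obs:reducible} can be refined to avoid over-blocking such vertices (so that effectively $d_G(v, \overline S) \le h(v)$ for the vertices that matter), or by an extra case analysis that isolates the low vertices of $S$ (where Lemma~\ref{lem:ERT} restricts the block structure) from the non-low ones.
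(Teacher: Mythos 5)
Your plan extracts an $h'$-minimal subgraph inside $S$, whereas the paper's proof extracts one inside $\overline S$, and this difference is fatal. Working inside $S$: you get $T = V(G^*) \subseteq S$ with $\rho_{G^*,h'}(T) \le -1$, and --- as you yourself observe --- translating to $\rho_{G,h}(T)$ adds nonnegative vertex gains and nonpositive edge discrepancies, yielding only an \emph{upper} bound $\rho_{G,h}(T) \le -1 + (\text{vertex gains})$. But to conclude $\rho_{G,h}(S) \ge 4i+j+1$ via the decomposition $\rho_{G,h}(S) = \rho_{G,h}(T) + \rho_{G,h}(S\setminus T) + \rho(\text{cross})$ you need \emph{lower} bounds on $\rho_{G,h}(T)$ and $\rho_{G,h}(S\setminus T)$, and nothing in the setup supplies them: at this point in the paper the only available potential lower bound is $\rho_h(G) \ge 0$ on $V(G)$ itself (Observation~\ref{obs:geq0} comes after and depends on this very lemma). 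Your construction never reaches $V(G)$, so there is nothing for the inequality to collide with. The proposed ``recursive invocation'' on $S \setminus T$ also does not clearly terminate, since $\overline{S \setminus T} = \overline S \cup T$ can introduce more crossing pairs than $(i,j)$, so the relevant parameter need not shrink. Neither of your suggested fixes --- refining the coloring hidden behind Observation~\ref{obs:reducible}, or separating low from non-low vertices via Lemma~\ref{lem:ERT} --- supplies the missing lower bound or a path to $V(G)$.

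The paper instead sets $G' = G - S$, defines $h'$ on $V(G')$ by deducting $|E_G(v,S)|$, extracts $U \subseteq V(G')$ with $\rho_{G',h'}(U) \le -1$, and examines the \emph{union} $S' = S \cup U$. Because $G'$ is induced in $G$, we have $\rho_{G,h'}(U) = \rho_{G',h'}(U)$, so the gap between $\rho_{G,h}(U)$ and $-1$ is a purely vertex-side gain bounded per crossing edge (there is no edge-discrepancy term to fight, unlike in your version). Plugging in the hypothesis $\rho_{G,h}(S) \le 4i+j$ then forces $\rho_{G,h}(S')$ below the bound, and choosing $S$ of maximum size together with induction on $2i+j$ forces $S' = V(G)$, contradicting $\rho_h(G) \ge 0$. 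The mechanism your proposal is missing is this outward growth toward $V(G)$; passing to the complement is what makes it possible.
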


\begin{proof}
    Suppose that the lemma is false, and let $i$ and $j$ have minimum $2i+j$ for which the lemma does not hold.
    Then, $G$ has a set $S \subsetneq V(G)$ for which  $\rho_{G,h}(S) \leq 4i+j$ and
    $E_G(S,\overline S)$ 
    has $i$ parallel pairs and $j$ simple pairs.
    We choose $S$ to be a counterexample to the lemma with largest size.

Let $G' = G -S$. For each vertex $v \in V(G')$, let $h'(v) = \max\{0, h(v) - d_G(v) + d_{G'}(v) \} = \max\{ 0,h(v) - |E_G(v,S)|\}$. 
By Observation~\ref{obs:reducible}, $G'$ is not DP $h'$-colorable.
Therefore, there exists $U \subseteq V(G')$ for which $G'[U]$ has a spanning $h'$-minimal subgraph. As $G$ is $h$-minimal and $U \neq V(G)$, it follows that $(G,h)$ and $(G',h')$ do not agree on $U$; therefore, $U$ contains a neighbor $u$ of $S$. As $h'(u) <3 $,  $\rho_{G',h'}(U) \leq -1$.

Now, consider the set $S':=U \cup S$, and write $\ell_1$ and $\ell_2$ for the number of simple pairs and parallel pairs, respectively, in $E_G(S,U)$. 
As $U$ contains a neighbor of $S$, $\ell_1 + \ell_2 \geq 1$.
As $\rho_h(v)-\rho_{h'}(v)\leq  4j' + 7i' $
for each $v \in U$ contained in $j'$ simple pairs and $i'$ parallel pairs of $E_G(v, S)$, 
we have
\begin{equation}
\label{eqn:j(k-2)}
\rho_{G,h}(S') \leq (\rho_{G,h}(U) + 4 \ell_1 + 7\ell_2) + \rho_{G,h}(S) - 5 \ell_1 - 11 \ell_2 \leq -1 - \ell_1 - 4\ell_2 + \rho_{G,h}(S).
\end{equation}

Now, suppose $2i+j = 1$, so that $i=\ell_2=0$ and $j=1$.
Then, $\rho_{G,h}(S) \leq  1$, so as $\ell_1 \geq 1$,
\[\rho_{G,h}(S') \leq -1 - \ell_1 + 1  \leq - 1.\]

Thus, it follows from the maximality of $S$ that $S' = V(G)$. Therefore, $\rho_h(G) \leq - 1$, and $G$ is not a counterexample to Theorem~\ref{thm:stronger}, a contradiction. This completes the case that $2i+j = 1$.

    Next, suppose that $2i+j \geq 2$. Then,
    as $\rho_{G,h}(S) \leq 4i+j$,
    (\ref{eqn:j(k-2)}) implies that 
    \begin{equation}
    \label{eqn:S'}
    \rho_{G,h}(S')     \leq -1 + 4(i-\ell_2) +(j- \ell_1).
    \end{equation}
    If $S' = V(G)$, then
    $U = \overline S$, so $i =\ell_2$ and $j = \ell_1$. Therefore,~\eqref{eqn:S'} implies that $\rho_h(G) \leq -1$, and $G$ is not a counterexample to Theorem~\ref{thm:stronger}. Otherwise, $|E_G(S', \overline{S'})| \geq 1$, so the $2i+j=1$ case implies that $\rho_{G,h}(S') \geq 2$. In both cases by~\eqref{eqn:S'}, 
    either $i > \ell_2$ or $j > \ell_1$,
    so that there is at least one edge in $E_G(S, V(G') \setminus U)$.

    Now, we observe that $E_G(S, V(G') \setminus U)$ has at least $i - \ell_2$ parallel pairs and $j - \ell_1$ simple pairs.
    As $1 \leq 2(i - \ell_2) + (j-\ell_1) < 2i+j$,
     the minimality of $2i+j$
     tells us that $\rho_{G,h}(S') >  4(i - \ell_2) + (j-\ell_1) $, contradicting (\ref{eqn:S'}). 
\end{proof}

\begin{lemma}\label{lem:no1}
    $G$ has no vertex satisfying $h(v) = 1$.
\end{lemma}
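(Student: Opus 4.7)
The plan is to derive a contradiction by a single application of Lemma~\ref{lem:j(k-2)} to the singleton set $S = \{v\}$, where $v$ is a hypothetical vertex with $h(v) = 1$. The key observation is a potential mismatch: a vertex with $h(v) = 1$ contributes only $\rho_{G,h}(v) = 1$, while Lemma~\ref{lem:j(k-2)} forces $\rho_{G,h}(S) \geq 2$ for any proper subset $S$ with at least one outgoing edge.

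More precisely, I would argue as follows. Suppose for contradiction $v \in V(G)$ satisfies $h(v) = 1$. By Observation~\ref{obs:list-degree}, $d_G(v) \geq h(v) = 1$, so $v$ has at least one neighbor, and in particular $|V(G)| \geq 2$, giving $\{v\} \subsetneq V(G)$. Let $i$ and $j$ denote the number of parallel and simple pairs in $E_G(\{v\}, V(G) \setminus \{v\})$, respectively. Since $v$ has at least one neighbor, $i + j \geq 1$, so Lemma~\ref{lem:j(k-2)} applies and yields
\[
\rho_{G,h}(\{v\}) \;\geq\; 4i + j + 1 \;\geq\; 2.
\]
On the other hand, $\rho_{G,h}(\{v\}) = \rho_{G,h}(v) = 1$ since $h(v) = 1$ and there are no edges inside a single vertex. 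This is the desired contradiction.

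There is no real obstacle here, since Lemma~\ref{lem:j(k-2)} has already done the work. The statement we are proving is essentially a corollary of that lemma: the marginal potential contribution $+1$ of a list-size-$1$ vertex cannot keep up with the $+2$ baseline that the lemma guarantees for any subset with at least one outgoing edge.
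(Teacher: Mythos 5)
Your proof is correct and follows essentially the same route as the paper: both apply Lemma~\ref{lem:j(k-2)} to the singleton $\{v\}$ (after noting $v$ has positive degree) to force $\rho_{G,h}(\{v\}) \geq 2$, which contradicts $\rho_{G,h}(v) = 1$ when $h(v) = 1$. Your citation of Observation~\ref{obs:list-degree} in place of the paper's appeal to Observations~\ref{obs:zero} and~\ref{obs:reducible} is an equivalent way to obtain positive degree, and your explicit check that $\{v\} \subsetneq V(G)$ is a minor but harmless extra step.
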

\begin{proof}
    By Observations~\ref{obs:zero} and~\ref{obs:reducible}, each $v \in V(G)$ has positive degree, so by Lemma~\ref{lem:j(k-2)}, $\rho_h(v) \geq 2$. 
By the definition of potential, this implies that $h(v) \geq 2$, a contradiction.
\end{proof}

We also obtain the following observation for all vertex subsets of $V(G)$.

\begin{obs}
\label{obs:geq0}
Each nonempty vertex subset $U \subseteq V(G)$ satisfies $\rho_{G,h}(U) \geq 0$. 
\end{obs}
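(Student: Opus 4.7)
The plan is to handle the two cases $U=V(G)$ and $U\subsetneq V(G)$ separately, since in the latter case we can immediately invoke Lemma~\ref{lem:j(k-2)} once we know there is a crossing edge.

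First I would observe that $G$ must be connected. Indeed, if $G$ had two or more components, then any single component would be a proper subgraph of $G$, and since $G$ is not DP $h$-colorable, at least one of these components must also fail to be DP $h$-colorable, contradicting the $h$-minimality of $G$.

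Next, for a proper nonempty subset $U\subsetneq V(G)$, connectivity of $G$ guarantees that $E_G(U,\overline{U})$ contains at least one edge. Let $i$ and $j$ be the numbers of parallel and simple pairs in $E_G(U,\overline{U})$, so that $i+j\geq 1$. Lemma~\ref{lem:j(k-2)} then yields
\[\rho_{G,h}(U)\geq 4i+j+1\geq 1\geq 0.\]

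Finally, for $U=V(G)$, I would use that $G$ is a counterexample to Theorem~\ref{thm:stronger}, so $G$ fails both of its alternatives; in particular $\rho_h(G)\geq 0$, because $\rho_h(G)$ is an integer (all vertex and edge potentials take integer values) and by assumption $\rho_h(G)\not\leq -1$. This is the only place where we appeal to $G$ itself being a counterexample; the main content of the lemma comes from the previous case and Lemma~\ref{lem:j(k-2)}. There is no real obstacle here beyond verifying that $G$ is connected, which is what lets us reduce the general case to an application of the already-established submodular/crossing-edge bound.
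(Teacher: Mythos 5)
Your proof is correct and takes essentially the same route as the paper: split off the case $U=V(G)$, which follows directly from $G$ being a counterexample to Theorem~\ref{thm:stronger}, and handle proper nonempty subsets via Lemma~\ref{lem:j(k-2)}. The one thing you add that the paper takes for granted is the explicit verification that $G$ is connected; your argument for that (a disconnected $h$-minimal multigraph would have a component that is a proper non-DP-$h$-colorable subgraph) is sound and is the right justification for invoking Lemma~\ref{lem:j(k-2)} on proper subsets.
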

\begin{proof}
If $1 \leq |U| \leq |V(G) | - 1$, then as $G$ is connected, 
$\rho_{G,h}(U) \geq 2$ by Lemma~\ref{lem:j(k-2)}. If $|U| = |V(G)|$, then $\rho_{G,h}(U) = \rho_{G,h}(G) \geq 0$ by our assumption that $G$ is a counterexample to Theorem~\ref{thm:stronger}.
\end{proof}

We say that a subgraph $B \subseteq G$ is \emph{terminal} if $V(B)$ is joined to $\overline{V(B)}$ by a cut edge.
{ The following lemmas establish certain properties of terminal subgraphs of $G$.}

\begin{lemma}
\label{lem:terminal}
    If $G$ has two disjoint terminal subgraphs $B_1, B_2$ for which $\rho(B_1) + \rho(B_2)\geq 5$, then $V(G) = V(B_1) \cup V(B_2)$.
\end{lemma}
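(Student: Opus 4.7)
The plan is to argue by induction on $|V(B_1)|+|V(B_2)|$, supposing for contradiction that $\overline{S}:=V(G)\setminus(V(B_1)\cup V(B_2))\neq\emptyset$. First I observe that both cut edges must leave $V(B_1)\cup V(B_2)$ into $\overline{S}$: if the cut edge of $B_1$ had its outside endpoint in $V(B_2)$, then by the uniqueness of cut edges it would also serve as the cut edge of $B_2$, forcing $E_G(V(B_1)\cup V(B_2),\overline{S})=\emptyset$ and so (by connectivity of $G$) $\overline{S}=\emptyset$. Thus write $e_i=u_iw_i$ with $u_i\in V(B_i)$ and $w_i\in\overline{S}$.

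Next I collect potential inequalities. Applying Lemma~\ref{lem:j(k-2)} to $\overline{V(B_i)}$ (joined to $V(B_i)$ by the single simple pair $e_i$) gives $\rho(\overline{V(B_i)})\geq 2$; decomposing via the cut edge $e_{3-i}$ (whose pair contributes $-5$), this becomes $\rho(B_{3-i})+\rho(\overline{S})-5\geq 2$, i.e., $\rho(B_{3-i})+\rho(\overline{S})\geq 7$ for each $i\in\{1,2\}$. Observation~\ref{obs:geq0} gives $\rho(G)\geq 0$, i.e., $\rho(B_1)+\rho(B_2)+\rho(\overline{S})\geq 10$. Combining these with the hypothesis $\rho(B_1)+\rho(B_2)\geq 5$ yields $\rho(B_i)\geq 3$ for each $i$ and $\rho(\overline{S})\geq 4$.

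Then I reduce each $B_i$. Since $B_i$ is an induced subgraph whose only external edge is $e_i$ at $u_i$, Observation~\ref{obs:reducible} shows $B_i$ is not DP $h_i$-colorable, where $h_i(u_i)=h(u_i)-1$ and $h_i=h$ elsewhere. Taking an $h_i$-minimal subgraph $B_i^{*}\subseteq B_i$, necessarily $u_i\in V(B_i^{*})$ (else $B_i^{*}$ would be a proper $h$-minimal subgraph of $G$, contradicting $h$-minimality); and minimality of $G$ rules out $(B_i^{*},h_i^{*})$ being exceptional (since $h_i^{*}(u_i)<3$), forcing $\rho_{h_i^{*}}(B_i^{*})\leq -1$ and hence $\rho_{G,h}(V(B_i^{*}))\leq 3$. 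Lemma~\ref{lem:j(k-2)} applied to $V(B_i^{*})$ then shows the edges leaving $V(B_i^{*})$ consist of only $e_i$ (so $V(B_i^{*})=V(B_i)$ and $\rho(B_i)\leq 3$), or of $e_i$ plus exactly one simple edge to $T_i:=V(B_i)\setminus V(B_i^{*})$, making $T_i$ itself a terminal subgraph of $G$ with $\rho(T_i)\geq 2$. In the second case $(T_i,B_{3-i})$ is a pair of disjoint terminal subgraphs with $\rho(T_i)+\rho(B_{3-i})\geq 5$ and strictly fewer total vertices, so the inductive hypothesis yields $V(G)=V(T_i)\cup V(B_{3-i})\subsetneq V(B_1)\cup V(B_2)\subsetneq V(G)$, a contradiction. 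Hence $\rho(B_i)=3$ for each $i$.

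The final step, and the main obstacle, is to derive a contradiction from the fully tight configuration $\rho(B_1)=\rho(B_2)=3$, $\rho(\overline{S})=4$, $\rho(G)=0$. Since $\rho_{h_i^{*}}(B_i)=-1$ combined with $\Delta_i\in\{3,4\}$ being the potential drop at $u_i$ forces $\Delta_i=4$, we must have $h(u_i)=3$, and the structure is rigid. I expect to close this case by performing one additional reduction that combines $B_1$ (or $B_2$) with the portion of $\overline{S}$ containing $w_i$---exploiting that low vertices in $\overline{S}$ must organize themselves into $K_s^{t}$- or $C_s^{t}$-blocks by Lemma~\ref{lem:ERT}---and then using the $h_i$-minimality of $B_i$ together with the matching structure of $(H,L)$ across the cut edges to extend a coloring of the modified graph to an $(H,L)$-coloring of $G$, contradicting our initial choice of $(H,L)$.
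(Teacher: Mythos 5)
Your proposal diverges substantially from the paper's argument, and more importantly, it is incomplete: the decisive case is not proved but only sketched in vague terms.

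The paper's proof of Lemma~\ref{lem:terminal} is a direct one-shot reduction, not an induction. Using Observation~\ref{obs:reducible}, each $B_i$ is not DP colorable with one unit removed from $h$ at the cut vertex $x_i$; from this one extracts a single color $c_i\in L(x_i)$ that every $(H,L|_{V(B_i)})$-coloring of $B_i$ must use at $x_i$. One then forms $G'$ from $B_1\cup B_2$ by adding the edge $x_1x_2$, and a cover $(H',L')$ by restricting $(H,L)$ to $V(B_1)\cup V(B_2)$ and adding the single cover edge $c_1c_2$. Then $G'$ has no $(H',L')$-coloring, and $\rho_{G',h}(G')=\rho(B_1)+\rho(B_2)-5\geq 0$, which contradicts minimality since $|V(G')|<|V(G)|$ (any $h$-minimal subgraph $G_0\subseteq G'$ must contain $x_1x_2$, which is a cut edge, so $(G_0,h)$ is not exceptional and forces $\rho\leq -1$ against $\rho_{G',h}(G')\geq 0$). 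The forced-color gadget is the heart of the lemma; your proposal never arrives at it.

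There are also concrete problems in what you wrote. In your second paragraph the claims ``$\rho(B_i)\geq 3$ for each $i$'' and ``$\rho(\overline{S})\geq 4$'' do not follow from (a) $\rho(B_{3-i})+\rho(\overline{S})\geq 7$, (b) $\rho(G)\geq 0$, and (c) $\rho(B_1)+\rho(B_2)\geq 5$: for example $\rho(B_1)=2$, $\rho(B_2)=3$, $\rho(\overline{S})=5$ satisfies all three. This error propagates: in the inductive step you invoke ``$\rho(T_i)\geq 2$'' together with the unproved ``$\rho(B_{3-i})\geq 3$'' to reach $\rho(T_i)+\rho(B_{3-i})\geq 5$. (This particular gap is repairable --- since $\rho_{G,h}(V(B_i^*))\leq 3$, one has $\rho(T_i)=\rho(B_i)-\rho(V(B_i^*))+5\geq \rho(B_i)+2\geq 4$, which together with $\rho(B_{3-i})\geq 2$ gives the needed $\geq 5$ --- but as written the step is unjustified.) And your final ``fully tight'' case is restricted to $\rho(B_1)=\rho(B_2)=3$, whereas the actual remaining configurations after the reduction step also include $(\rho(B_1),\rho(B_2))=(2,3)$ or $(3,2)$, since $\rho(B_i)\leq 3$ and $\rho(B_1)+\rho(B_2)\geq 5$ allow these. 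Most seriously, the closing paragraph is a prediction (``I expect to close this case by...''), not an argument; it gestures at Lemma~\ref{lem:ERT} and at the matching structure of $(H,L)$ without constructing either the modified graph, the modified cover, or the coloring extension that would produce the contradiction. Precisely this missing step is what the paper's edge-addition trick supplies, so the proposal cannot be judged correct.
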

\begin{proof}
    Suppose that $G$ has two disjoint terminal subgraphs $B_1$ and $B_2$ where $\rho_{G,h}(B_1)+\rho_{G,h}(B_2)\geq 5$ and $V(B_1)\cup V(B_2)\subsetneq V(G)$. 
    For $i \in \{1,2\}$, let $B_i$ have a cut vertex $x_i$ incident with the single edge of $E_G(B_i, \overline{B_i})$.
    Letting $h'$ agree with $h$ on each vertex of $G$ except that $h'(x_i) = h(x_i) - 1$ for $i \in \{1,2\}$, Observation~\ref{obs:reducible} implies that each $B_i$ is not DP $h'$-colorable.
    Therefore, for each $x_i$, there is a color $c_i \in L(x_i)$ for which every $(H,L\vert_{V(B_{i})})$-coloring $f$ of $ B_{i}$
    assigns $f(x_i) = c_i$.

    Now, consider the graph $G'$ obtained from $B_1 \cup B_2$ by adding the edge $x_1 x_2$. Let $(H',L')$ be a cover of $G'$ obtained from $(H,L)$ as follows: Take the subgraph of $H[L(V(B_1 \cup B_2))]$, and add a single edge joining $c_1$ and $c_2$. Let $L'$ agree with $L$ on $G'$. Then, by our observation above, $G'$ has no $(H',L')$-coloring. Furthermore, 
    \[\rho_{G',h}(G') = \rho_{G,h}(B_1) + \rho_{G,h}(B_2) - \rho_{G,h}(x_1 x_2) \geq 0.\]
    Hence, $G'$ is a counterexample to Theorem~\ref{thm:stronger}  with fewer vertices than $G$, which is a contradiction.
    Therefore, $V(B_1)\cup V(B_2)=V(G)$. 
\end{proof}

\begin{lemma}
    \label{lem:no2in2}
    If $S\subsetneq V(G)$ satisfies $\rho_{G,h}(S) = 2$, 
    then every nonempty proper subset $S_1 \subsetneq S$ satisfies $|E_G(S_1, \overline{S_1})| \geq 2$. In particular, $\rho_{G,h}(S_1) \geq 3$.
\end{lemma}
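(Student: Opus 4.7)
The plan is to argue by contradiction: assume there is a nonempty $S_1 \subsetneq S$ with $|E_G(S_1, \overline{S_1})| \leq 1$, and derive a contradiction. Since $G$ is connected, every nonempty proper subset of $V(G)$ has at least one outgoing edge, so in fact $|E_G(S_1, \overline{S_1})| = 1$; call this edge $e'$. First, I apply Lemma~\ref{lem:j(k-2)} to $S$ itself: with $\rho_{G,h}(S) = 2$, the bound $\rho_{G,h}(S) \geq 4i+j+1$ forces $i = 0$ and $j = 1$, so $E_G(S, \overline{S})$ is also a single simple pair. Setting $S_2 = S \setminus S_1$, I then split into two cases according to whether $e'$ points into $S_2$ or into $\overline{S}$.

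The easy case is when $e'$ joins $S_1$ to $\overline{S}$: then $|E_G(S_1, S_2)| = 0$, and $e'$ accounts for the unique edge of $E_G(S, \overline{S})$, forcing $|E_G(S_2, \overline{S})| = 0$. Consequently $|E_G(S_2, \overline{S_2})| = 0$, contradicting connectedness of $G$ since $S_2$ is a nonempty proper vertex subset.

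The main case, and the heart of the argument, is when $e'$ joins $S_1$ to $S_2$. Then $|E_G(S_1, \overline{S})| = 0$, and the unique edge of $E_G(S, \overline{S})$ lies in $E_G(S_2, \overline{S})$. Each of $S_1$ and $\overline{S}$ is thus joined to the rest of $G$ by exactly one (cut) edge. A short component-counting argument—any component of $G[S_1]$ or $G[\overline{S}]$ with no outgoing edge would disconnect $G$—shows that $G[S_1]$ and $G[\overline{S}]$ are both connected, so they form two disjoint terminal subgraphs, and their union excludes the nonempty $S_2$. The contrapositive of Lemma~\ref{lem:terminal} therefore gives $\rho_{G,h}(S_1) + \rho_{G,h}(\overline{S}) \leq 4$. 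On the other hand, $\rho_h(G) = \rho_{G,h}(S) + \rho_{G,h}(\overline{S}) - 5 = \rho_{G,h}(\overline{S}) - 3 \geq 0$ (since $G$ is a counterexample), giving $\rho_{G,h}(\overline{S}) \geq 3$, while Lemma~\ref{lem:j(k-2)} applied to $S_1$ with one simple pair out yields $\rho_{G,h}(S_1) \geq 2$. Summing, $\rho_{G,h}(S_1) + \rho_{G,h}(\overline{S}) \geq 5$, a contradiction.

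The ``in particular'' statement is then immediate: having shown $|E_G(S_1, \overline{S_1})| \geq 2$, Lemma~\ref{lem:j(k-2)} with $2i+j \geq 2$ gives $\rho_{G,h}(S_1) \geq 4i+j+1 \geq 3$. The hardest part of the argument will be recognizing the ``two disjoint terminal subgraphs'' configuration in the main case, which is what activates Lemma~\ref{lem:terminal}; verifying the required connectedness of $G[S_1]$ and $G[\overline{S}]$ is the one structural check to keep in mind, while everything else reduces to direct edge-counting together with applications of Lemma~\ref{lem:j(k-2)} and the identity $\rho_h(G) = \rho_{G,h}(S) + \rho_{G,h}(\overline{S}) - 5$.
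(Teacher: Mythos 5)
Your proof is correct, and it takes a genuinely different and in fact cleaner route than the paper's. The paper proves this lemma from scratch: it extracts a forced color at $u_1 \in S_1$ and a forced color at $v_1 \in \overline{S}$, builds the auxiliary multigraph $G'' = G[S_1] \cup G[\overline{S}] + u_1 v_1$ with a modified cover joining those two forced colors, locates a spanning $h$-minimal subgraph $S''$ inside it with $\rho_{G'',h}(S'') \le -1$, and then runs a potential computation on $S'' \cup S_2$. Your argument instead recognizes that this "join-two-pendant-pieces-by-a-forced-edge" machinery is already packaged in Lemma~\ref{lem:terminal}: in the nontrivial case (the unique edge leaving $S_1$ goes to $S_2$), $G[S_1]$ and $G[\overline{S}]$ are disjoint terminal subgraphs whose union omits the nonempty $S_2$, and the potential bounds $\rho_{G,h}(S_1) \ge 2$ (Lemma~\ref{lem:j(k-2)}) and $\rho_{G,h}(\overline{S}) = \rho_h(G) + 3 \ge 3$ (Observation~\ref{obs:geq0} plus the identity $\rho_h(G) = \rho_{G,h}(S) + \rho_{G,h}(\overline{S}) - 5$ for a single cut edge) sum to $5$, which Lemma~\ref{lem:terminal} then forbids. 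You also correctly dispose of the degenerate case where the edge out of $S_1$ lands in $\overline{S}$, which the paper handles only implicitly by positioning $u_1u_2$ inside $S$, and your connectedness check on $G[S_1]$, $G[\overline{S}]$ is exactly the right thing to verify before invoking the terminal-subgraph lemma. The payoff of your version is that it avoids duplicating the cover-surgery construction that already lives inside Lemma~\ref{lem:terminal}; the payoff of the paper's version is that it is self-contained at the price of being longer. Both rely on the same underlying idea (two pendant pieces each carrying a forced color), and there is no circularity since Lemma~\ref{lem:terminal} is proved earlier and independently of the present lemma.
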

\begin{proof}
Let $G'=G-S$.
Since $\rho_{G,h}(S)=2$, by Lemma~\ref{lem:j(k-2)}, $|E_G(S, \overline{S})|=1$.
    Suppose that $S_1\subseteq S$ is a proper subset of $S$ satisfying $|E_G(S_1, \overline{S_1})|=1$.
    By Lemma~\ref{lem:j(k-2)}, $\rho_{G,h}(S_1) \geq 2$.
    Write $S_2 = S \setminus S_1$.    
    Let $u_1\in S_1$, $v_1\in V(G')$, and $ u_2, v_2\in S_2$, such that $u_1u_2$ and $v_1v_2$ are the cut edges connecting $S_1$ and $G'$, respectively, with $S_2$ (see Figure~\ref{fig:2in2}).
    As $2 = \rho_{G,h}(S) = \rho_{G,h}(S_1 \cup S_2) = \rho_{G,h}(S_1) + \rho_{G,h}(S_2) - 5$, it follows that $\rho_{G,h}(S_2) \leq 5$.
\begin{figure}
\begin{center}
\begin{tikzpicture}
[scale=1.2,auto=left,every node/.style={circle,fill=gray!30,minimum size = 6pt,inner sep=0pt}]

\draw [draw=gray!40,fill=gray!40,very thick] (0,-0.5) rectangle (2,0.5);
\draw [draw=gray!40,fill=gray!40,very thick] (3,-0.5) rectangle (5,0.5);
\draw [draw=gray!40,fill=gray!40,very thick] (6,-0.5) rectangle (8,0.5);

\node(f3) at (2,0) [draw = black,fill=black] {};
\node(f3) at (3,0) [draw = black,fill=black] {};
\node(f3) at (5,0) [draw = black,fill=black] {};
\node(f3) at (6,0) [draw = black,fill=black] {};

\node(z) at (4,0.75) [draw=white,fill=white,minimum size=0pt,inner sep=0pt] {$5$};
\node(z) at (1,0.75) [draw=white,fill=white,minimum size=0pt,inner sep=0pt] {$2$};
\node(z) at (2.5,0.25) [draw=white,fill=white,minimum size=0pt,inner sep=0pt] {$-5$};
\node(z) at (4,0) [draw=gray!40,fill=gray!40,minimum size=0pt,inner sep=0pt] {$S_2$};
\node(z) at (7,0) [draw=gray!40,fill=gray!40,minimum size=0pt,inner sep=0pt] {$G'$};
\node(z) at (1,0) [draw=gray!40,fill=gray!40,minimum size=0pt,inner sep=0pt] {$S_1$};
\node(z) at (4.8,-0.3) [draw=gray!40,fill=gray!40,minimum size=0pt,inner sep=0pt] {$v_2$};
\node(z) at (6.2,-0.3) [draw=gray!40,fill=gray!40,minimum size=0pt,inner sep=0pt] {$v_1$};
\node(z) at (3.2,-0.3) [draw=gray!40,fill=gray!40,minimum size=0pt,inner sep=0pt] {$u_2$};
\node(z) at (1.8,-0.3) [draw=gray!40,fill=gray!40,minimum size=0pt,inner sep=0pt] {$u_1$};

\draw [-] (2,0) to  (3,0) {};
\draw [-] (5,0) to  (6,0) {};

\end{tikzpicture}
\end{center}
\caption{The structure of $G$ described in the proof of Lemma~\ref{lem:no2in2}. 
The numbers indicate $\rho_{G,h}(S_1) = 2$ and  $\rho_{G,h}(S_2) = 5$, so $\rho_{G,h}(S) = \rho_{G,h}(S_1 \cup S_2) = 2$.
Each cut edge has potential $-5$.}
\label{fig:2in2}
\end{figure}
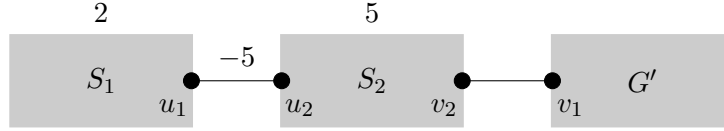

    Every proper subgraph $F$ of $G$ has an $(H,L\vert_{V(F)})$-coloring $f_F$. 
    By Observation~\ref{obs:reducible}, if we define $h'$ on $V(G')$ to agree with $h$ except that $h'(v_1) = h(v_1) - 1$, then $G'$ is not DP $h'$-colorable. 
    Therefore, there exists a color $c_v \in L(v_1)$ such that every possible coloring $f_{G'}$
    assigns $f_{G'}(v_1) = c_v$. 
    Similarly, there exists a color $c_u \in L(u_1)$ such that every possible coloring $f_{G[S_1]}$ assigns $f_{G[S_1]}(u_1) = c_u$.

    Now, we define $G'' = G' \cup S_1 +u_1v_1$ and a cover graph $H''$ from  $H[L(V(G') \cup  S_1 )]$ by adding the edge $c_u c_v$. We observe that $G''$ has no $(H'',L\vert_{V(G'')})$-coloring. 
    Hence, as $G$ is a minimum counterexample, there exists a subset $S'' \subseteq V(G'')$ for which $G''[S'']$ has a spanning $h$-minimal subgraph. 
    As $G$ does not have a proper $h$-minimal subgraph, 
    $S''$ contains  $u_1$ and $v_1$; thus, $G''[S'']$ contains a cut edge and so is not a $4$-Ore graph; in particular, 
    $\rho_{G'',h}(S'')   \leq -1$.     
    Then, in $G$, 
    $$
    \rho_{G,h}(S''\cup S_2)\leq \rho_{G'',h}(S'') - \rho_{G'',h}(u_1v_1) + \rho_{G,h}(S_2) +\rho_{G,h}(u_1u_2) + \rho_{G,h}(v_1v_2) \leq -6+ \rho_{G,h}(S_2) \leq -1,
    $$
    contradicting Observation~\ref{obs:geq0}.

    Finally, $\rho_{G,h}(S_1) \geq 3$ by Lemma~\ref{lem:j(k-2)}.
\end{proof}

\begin{lemma}
\label{lem:2or3}
    Let $S \subseteq V(G)$. If $|E_G(S,\overline S)| = 1$, then $\rho_{G,h}(S) \in \{2,3\}$.
\end{lemma}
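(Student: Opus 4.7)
The lower bound $\rho_{G,h}(S)\ge 2$ is the case $i=0,j=1$ of Lemma~\ref{lem:j(k-2)}, so it suffices to establish $\rho_{G,h}(S)\le 3$. I will argue by contradiction: assume some $S\subsetneq V(G)$ has $|E_G(S,\overline S)|=1$ and $\rho_{G,h}(S)\ge 4$, and choose such $S$ with $|S|$ minimum. Write the cut edge as $uv$ with $u\in S$ and $v\in\overline S$; since $h(v)\ge 2$ (Lemma~\ref{lem:no1}) and $d_G(v)\ge h(v)$ (Observation~\ref{obs:list-degree}), we have $|\overline S|\ge 2$.

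The plan is to shrink $\overline S$ to a single new vertex $w$ and extract a smaller counterexample. As in the proof of Lemma~\ref{lem:terminal}, a forced-color argument applied to the cut edge $uv$ yields a color $c_u\in L(u)$ that is used by $u$ in every $(H,L)$-coloring of $G[S]$. Let $G':=G[S]+w+uw$, let $h'\equiv h$ on $S$ with $h'(w):=1$, and build an $h'$-cover of $G'$ by restricting $(H,L)$ to $S$, setting $L'(w):=\{\gamma\}$ for a new color $\gamma$, and taking the single edge $c_u\gamma$ as the $uw$-matching. Any $(H',L')$-coloring of $G'$ would require $w=\gamma$ and hence $u\ne c_u$, which is impossible; so $G'$ is not DP $h'$-colorable.

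Since $|V(G')|=|S|+1<|V(G)|$, any $h'$-minimal subgraph $G''$ of $G'$ is, by the minimality of $|V(G)|$, either exceptional or satisfies $\rho_{h'}(G'')\le -1$. If $w\notin V(G'')$, then $G''$ would be a proper $h$-minimal subgraph of $G$, which is forbidden; so $w\in V(G'')$. If $uw\notin E(G'')$, then $w$ is isolated in $G''$ and $G''-w$ would still fail to be DP $h'$-colorable, contradicting $h'$-minimality; so $uw\in E(G'')$ and $u\in V(G'')$. Because $h'(w)=1\ne 3$, exceptionality is ruled out, giving $\rho_{h'}(G'')\le -1$. Setting $S^*:=V(G'')\cap S$, a direct calculation (using that $G''$ has at most the edges of $G'[V(G'')]$, which on $S^*$ are exactly those of $G[S^*]$) gives
\[
\rho_{G,h}(S^*)-4 \;=\; \rho_{G',h'}(S^*\cup\{w\}) \;\le\; \rho_{h'}(G'') \;\le\; -1,
\]
so $\rho_{G,h}(S^*)\le 3$.

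If $S^*=S$ this contradicts $\rho_{G,h}(S)\ge 4$. Otherwise let $T:=S\setminus S^*\ne\emptyset$; since $u\in S^*$, the cut edge $uv$ lies in $E_G(S^*,\overline{S^*})$, so Lemma~\ref{lem:j(k-2)} applied to $S^*$ combined with $\rho_{G,h}(S^*)\le 3$ forces $E_G(S^*,T)$ to consist of exactly one simple pair—the empty case being excluded by the connectivity of $G$. Hence $E_G(T,\overline T)$ is a single simple edge and $|T|<|S|$, so by the minimal choice of $S$ we have $\rho_{G,h}(T)\le 3$. A submodularity computation then gives
\[
\rho_{G,h}(S) \;=\; \rho_{G,h}(S^*)+\rho_{G,h}(T)-5 \;\le\; 3+3-5 \;=\; 1,
\]
contradicting $\rho_{G,h}(S)\ge 4$. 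The delicate step is making sure that the $h'$-minimal subgraph $G''$ really contains both $w$ and the edge $uw$; once this is secured, the translation to $\rho_{G,h}(S^*)\le 3$ is immediate and the case split on whether $S^*=S$ (together with the inductive reduction to the smaller set $T$) closes the argument.
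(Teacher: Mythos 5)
Your proof is correct, but it takes a more roundabout path than the paper's. The paper applies Observation~\ref{obs:reducible} symmetrically to both $G[S]$ and $G[\overline S]$, reducing $h$ by $1$ at the respective cut endpoint; this produces subsets $S_1\subseteq S$ containing $x$ and $S_2\subseteq \overline S$ containing $y$, each with $\rho_{G,h}(S_i)\le 3$, whereupon $\rho_{G,h}(S_1\cup S_2)\le 3+3-5=1<2$ forces $V(G)=S_1\cup S_2$ by Lemma~\ref{lem:j(k-2)}, hence $S_1=S$ and the upper bound follows immediately. Your argument instead contracts $\overline S$ to a single pendant vertex $w$, requires a forced-color argument (used implicitly in Lemma~\ref{lem:terminal} but not isolated as a standalone tool in the paper) to show the auxiliary graph $G'=G[S]+w+uw$ is not DP $h'$-colorable, and then needs a secondary induction on $|S|$ to handle the case where the extracted $h'$-minimal subgraph $G''$ omits part of $S$. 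Your steps do check out: the identity $\rho_{G,h}(S^*)-4=\rho_{G',h'}(S^*\cup\{w\})$ is correct, $h'(w)=1$ correctly rules out exceptionality, the cut-edge and Lemma~\ref{lem:j(k-2)} bookkeeping around $T=S\setminus S^*$ is right, and the connectedness of $G$ (needed to exclude $E_G(S^*,T)=\emptyset$) is a standing assumption that follows from $h$-minimality. What the paper's symmetric use of Observation~\ref{obs:reducible} buys is that it eliminates the auxiliary vertex, the forced-color step, and the induction on $|S|$ all at once, replacing them with a single submodularity computation.
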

\begin{proof}
    By Lemma~\ref{lem:j(k-2)}, $\rho_{G,h}(S) \geq 2$. 
    Now, let $E_G(S, \overline{S})=\{xy\}$ where $x\in S$. 
    Define $h'$ on $S$ so that $h'(v) = h(v)$ for $v \in S \setminus \{x\}$ and $h'(x) = h(x) - 1$. By Observation~\ref{obs:reducible}, $G[S]$ is not DP $h'$-colorable. Therefore, $S$ has a subset $S_1$ for which $G[S_1]$ has a spanning subgraph that is $h'$-minimal. 
    As $S_1$ is a proper subset of $V(G)$, $h$ and $h'$ do not agree on $S_1$; therefore, $S_1$ contains $x$. 
    As $h'(x) < 3$, $\rho_{G,h'}(S_1) \leq -1$, so $\rho_{G,h}(S_1) \leq 3$.
    By symmetry, $\overline S$ has a vertex subset $S_2$ for which $\rho_{G,h}(S_2) \leq 3$.

    Now, $\rho_{G,h}(S_1 \cup S_2) \leq 3 + 3 - 5 = 1$. Thus, by Lemma~\ref{lem:j(k-2)}, $V(G) = S_1 \cup S_2$. Therefore, $S_1 = S$ and $S_2 = \overline S$. So, $\rho_{G,h}(S) \leq 3$, as claimed.
\end{proof}

Recall that $F^-$ denotes a multigraph obtained from a multigraph $F$ by removing any edge.

\begin{lemma}\label{dia}
  $G$ has no $K_4^-$-subgraph
  in which all vertices of a $3$-cycle are low  vertices of degree $3$. 
\end{lemma}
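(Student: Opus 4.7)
The plan is to argue by contradiction. Suppose $G$ contains a $K_4^-$ on $\{a,b,c,d\}$ with triangle $abc$ of low degree-$3$ vertices and missing edge $cd$; set $S=\{a,b,c\}$. Since $a,b$ are low degree-$3$, each has $d$ as its unique neighbor outside $S$, and since $c$ is low degree-$3$ with two neighbors $a,b$ in $S$, it has a unique third neighbor $e\in V(G)\setminus\{a,b\}$. I would split into the cases $e=d$ and $e\neq d$.

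In the case $e=d$, the set $W=\{a,b,c,d\}$ spans $K_4$ in $G$, so
\[\rho_{G,h}(W)=8\cdot 3+\rho_h(d)-5\cdot 6=\rho_h(d)-6.\]
Observation~\ref{obs:geq0} (combined with Lemma~\ref{lem:j(k-2)} when $W\subsetneq V(G)$) forces $\rho_{G,h}(W)\ge 0$, so $\rho_h(d)\ge 6$ and hence $h(d)=3$. Since $\cDP(K_4)=4$, the subgraph $K_4$ is not DP $h$-colorable with $h\equiv 3$, so the $h$-minimality of $G$ forces $V(G)=W$; a potential check rules out parallel edges, giving $G=K_4$, which is exceptional and contradicts $G$ being a counterexample.

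For the case $e\neq d$, let $(H,L)$ be the $h$-cover of $G$ with no coloring, and let $f$ be an $(H,L)$-coloring of $G-S$ (which exists by $h$-minimality). Restricting to $S$, each $|L'(v)|\ge 2$ since every $v\in S$ has exactly one neighbor outside $S$. Since $G[S]=C_3$ cannot admit an $(H[L'(S)],L')$-coloring (else $f$ extends to all of $G$), Lemma~\ref{lem:cycle-cover} forces $H[L'(S)]$ to be two disjoint copies of $C_3$. Writing $\alpha\colon L(d)\to L(a)$, $\beta\colon L(d)\to L(b)$, $\gamma\colon L(e)\to L(c)$ for the matchings from $ad,bd,ce$, this structural condition forces $M(ab)(\alpha(c_d))=\beta(c_d)$ for every achievable $c_d=f(d)$ and determines a unique $c_e=\phi(c_d):=\gamma^{-1}(M(ac)(\alpha(c_d)))$ for which extension fails. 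Hence the ``bad'' pairs form a partial matching $\phi\subseteq L(d)\times L(e)$, and I form the smaller graph $G^*=(G-S)+de$ with cover $(H^*,L^*)$ extending $(H,L)$ on $V(G)\setminus S$ by $\phi$ on the new edge $de$. By construction $G^*$ has no $(H^*,L^*)$-coloring, and a direct computation using $\rho_{G,h}(S)=9$ and the three simple edges $ad,bd,ce$ gives $\rho_h(G^*)\ge\rho_h(G)\ge 0$.

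Since $|V(G^*)|<|V(G)|$, the minimality of $G$ forces any $h$-minimal subgraph $G^{**}\subseteq G^*$ to satisfy Theorem~\ref{thm:stronger}. If $\rho_h(G^{**})\le -1$, then in the case $\{d,e\}\not\subseteq V(G^{**})$ the edge $de$ is absent from $G^{**}$, so $G^{**}\subseteq G$ and $\rho_{G,h}(V(G^{**}))\le -1$, contradicting Observation~\ref{obs:geq0}; in the case $\{d,e\}\subseteq V(G^{**})$, a computation gives $\rho_{G,h}(V(G^{**}))\le 5$ and hence $\rho_{G,h}(V(G^{**})\cup S)\le 5+9-15=-1$, contradicting either $\rho_h(G)\ge 0$ or Lemma~\ref{lem:j(k-2)}. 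If $G^{**}$ is a $4$-Ore graph on at most $10$ vertices with $h\equiv 3$, then when $de\notin E(G^{**})$ the proper subgraph $G^{**}\subseteq G$ is not DP $h$-colorable (as $\cDP(G^{**})=4$), contradicting $h$-minimality; when $de\in E(G^{**})$, I form $F$ as the DHGO-composition of $G^{**}$ at edge $de$ with $K_4$, splitting a vertex of $K_4$ into $z_1\sim\{a,b\}$ and $z_2\sim\{c\}$ and identifying $z_1=d$, $z_2=e$. Then $F$ is a $4$-Ore subgraph of $G$ with $h\equiv 3$ on $V(F)=V(G^{**})\cup S$; if $F\subsetneq G$ this contradicts $h$-minimality, and if $F=G$ then either $|V(G)|\le 10$ and $G$ is exceptional or $|V(G)|\ge 13$ and $\rho_h(G)\le -1$ by Observation~\ref{obs:ore-potential}, both contradicting that $G$ is a counterexample. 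The main technical obstacles are verifying that $\phi$ is a matching under possibly non-perfect covering matchings and the bookkeeping in the DHGO-composition subcase.
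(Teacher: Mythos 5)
Your approach is correct, and it differs from the paper's in a meaningful way. Where you delete the entire low triangle $S=\{a,b,c\}$ and add a ``synthetic'' edge $de$ whose cover matching $\phi$ records the bad $(f(d),f(e))$ pairs, the paper instead deletes only $v_2,v_3$ and \emph{identifies} $v_1$ with $v_4$ into a single new vertex $v'$, merging their lists color by color (after first establishing that the cover on $\{v_1,v_2,v_3\}$ decomposes into three $C_3$'s and that the matchings from $v_4$ are ``aligned''). The resulting reduced multigraphs are in fact isomorphic (your $d$ plays the role of the paper's $v'$), but the covers are constructed differently: the paper reuses the original $v_1$--$v_1'$ matching on the edge $v'v_1'$, while you synthesize a fresh matching on $de$ encoding exactly the extension obstruction. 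Your route avoids the paper's preliminary alignment argument and handles a general cover directly via the ``$\phi$ is a matching'' verification (which you flag, correctly, as the main technical point to fill in); the paper's route avoids constructing a new matching but pays with the structural normalization. The potential bookkeeping lands at the same place ($\rho_{G,h}(S^{**}\cup S)\le -1$, resp.\ $\rho_{G,h}((U-v')\cup M)\le -1$), and both handle the exceptional case by recognizing a DHGO-composition with $K_4$, so both reach $\le 13$ vertices and finish by Observation~\ref{obs:ore-potential}. You also explicitly treat the degenerate case $e=d$ (the $K_4^-$ sits inside a $K_4$ of $G$), which the paper's setup silently excludes by positing that $v_1v_4\notin E(G)$ so that $v_1'$ exists; that extra case does need to be addressed given the literal statement of the lemma, and your disposal of it (potential forces $h(d)=3$, then $K_4\subseteq G$ violates $h$-minimality unless $G=K_4$, which is exceptional) is correct.

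Two small points to tighten if you write this up fully. First, when you argue $\phi$ is a matching, you should note that if $c_e$ has no match under $\gamma$ then $L'(c)$ has all three colors and extension always succeeds (a $(2,2,3)$-cover of $C_3$ is always colorable), so $\phi$ is only defined where the matchings are ``live''; this is what makes $\phi$ a \emph{partial} matching rather than forcing perfection. Second, in the exceptional subcase with $de\in E(G^{**})$, you should observe that $G^{**}$ cannot simultaneously contain the $\phi$-edge and an original $de$-edge of $G$ (a $4$-Ore graph is simple), so $G^{**}$ minus its $de$-edge really is a subgraph of $G$; this is what makes the DHGO-composite $F$ a genuine subgraph of $G$ and closes the argument.
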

\begin{proof}  
Suppose that $G$ contains a vertex subset $M=\{v_1,v_2,v_3,v_4\}\subset V(G)$ such that
$d(v_1)=d(v_2)=d(v_3)=3$, and such that each vertex pair in $M$ except $v_1, v_4$ is joined by an edge.
 Let $v'_1$ be the unique neighbor of $v_1$ in $V(G) \setminus M$.

By Lemma~\ref{lem:cycle-cover}, $H[L(\{v_1,v_2,v_3\})]$ consists of three $C_3$ components.
As $G$ is $h$-minimal, $G - (M - v_4)$ has an $(H,L\vert_{V(G)\setminus(M-v_4)})$-coloring $f$.
If $f(v_4)$ has neighbors in distinct components of $H[ L(\{v_2,v_3\})]$, then we can extend $f$ to $M$. Therefore, we assume that
for each $i \in \{1,2,3\}$ and pair $x,y \in \{v_1, v_2, v_3\}$, $H$ contains the edge $i_x i_y$.
Moreover, for each $i\in \{1, 2, 3\}$, we may assume $i_{v_4}i_{v_2}, i_{v_4}i_{v_3}\in E(H)$. 

Obtain $G'$ from  $G - \{v_2, v_3\}$ by identifying $v_1$ and $v_4$ into a single vertex $v'$.
Note that as $d_G(v_1) = 3$, $v_1$ and $v_4$ have at most one common neighbor in $V(G) \setminus M$, and hence $v'$ is in at most one parallel pair  in $G'$.
We obtain a cover $(H',L')$ of $G'$ as follows. First, we delete $L(v_2) \cup L(v_3)$ from $H$. Then, for each $i \in \{1,2,3\}$, we identify $i_{v_1}$ and $i_{v_4}$.
Finally, we let $L'(v) = L(v)$ for each $v \in V(G) \setminus \{v_1, v_2, v_3, v_4\}$, and
we let $L'(v')$ consist of the identified colors of $L(v_1)$ and $L(v_4)$. This gives us a cover $(H',L')$ of $G'$.

Now, we claim that if $G'$ has an $(H',L')$-coloring, then $G$ has an $(H,L)$-coloring. Indeed, given an $(H',L')$-coloring $f'$ of $G'$, we can construct an $(H,L)$-coloring of $G$ as follows. For each $v \in V(G) \setminus \{v_1, v_2, v_3, v_4\}$, let $f(v) = f'(v)$. Next, let $i \in \{1,2,3\}$ satisfy $f'(v')  = i_{v'}$, and assign $f(v_1) = i_{v_1}$ and $f(v_4) = i_{v_4}$. 
By our previous observations, $f(v_1)$ and $f(v_4)$ have a common neighbor in $L(v_2)$, so after this step, $L(v_2)$ has two available colors. Finally, let $f(v_3)$ be any available color in $L(v_3)$, and then let $f(v_2)$ be any available color in $L(v_2) \setminus \{f(v_3)\}$. Hence we construct an $(H,L)$-coloring $f$ of $G$.

As we have assumed that $G$ has no $(H,L)$-coloring, it follows that $G'$ has no $(H',L')$-coloring. Therefore, letting $h'(v) = |L'(v)|$ for each $v \in V(G')$, 
$G'$ contains a vertex subset $U$ for which $G'[U]$ is $h'$-minimal. 
As $G'[U]$ is not a subgraph of $G$ by the minimality of the counterexample $G$, it follows that $ v'\in U$.

Now, again by the minimality of  
 $G$, either $\rho_{G',h'}(U) \leq -1$, or $G'[U]$ is a $4$-Ore graph on at most $10$ vertices and $|L'(v)| = 3$ for each $v \in V(G')$. If $\rho_{G',h'}(U) \leq -1$, then as $v'$ is in at most one parallel pair, 
\[\rho_{G,h} ((U - v')\cup M) \leq -1 + 3(8) + 6 - 6(5) = -1,\]
contradicting Observation~\ref{obs:geq0}.

On the other hand, if $G'[U]$ is a $4$-Ore graph, then by definition, $G[(U - v') \cup M]$ is also a $4$-Ore graph on at most $13$ vertices, and $h(v) = 3$ for each $v \in (U - v') \cup M$.
As every $4$-Ore graph is 
{ not DP $3$-colorable},  $V(G) = (U - v') \cup M$. If $|V(G)| = 13$, then $\rho_{G,h}(G) = -1$ by Observation~\ref{obs:ore-potential}, contradicting Observation~\ref{obs:geq0}. If $|V(G)| \leq 10$, then $G$ is a $4$-Ore graph on at most $10$ vertices, and hence $G$ is not a counterexample to Theorem~\ref{thm:stronger}.
\end{proof}

\begin{lemma}\label{lem:no2}
If $v \in V(G)$ satisfies $h(v) = 2$, then $d(v) = 3$ and $v$ has three distinct neighbors.

\end{lemma}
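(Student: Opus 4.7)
The plan is to first apply Lemma~\ref{lem:j(k-2)} to the singleton $S = \{v\}$. Since $h(v) = 2$ implies $\rho_{G,h}(v) = 4$, the inequality $4 = \rho_{G,h}(\{v\}) \geq 4i + j + 1$, where $i$ and $j$ count parallel and simple pairs at $v$, forces $i = 0$ and $j \leq 3$. Combined with Observation~\ref{obs:list-degree}, this yields $d(v) \in \{2, 3\}$ with all neighbors distinct. It remains to rule out $d(v) = 2$, which constitutes the bulk of the proof.

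Suppose for contradiction that $v$ has exactly two distinct neighbors $u_1$ and $u_2$. We may assume $|L(v)| = 2$, since otherwise any valid coloring of $G - v$ forbids at most two colors at $v$ but leaves at least one available. The plan is to reduce by replacing the path $u_1 v u_2$ with a single edge. Define $G^* := (G - v) + u_1 u_2$, incrementing the multiplicity of $u_1 u_2$ by one, and construct a cover $(H^*, L^*)$ of $G^*$ that agrees with $(H,L)$ on $V(G)\setminus\{v\}$, with the new copy of $u_1 u_2$ equipped with a matching $M^*$ defined by $(c_1, c_2) \in M^*$ iff the partners of $c_1, c_2$ through the $(H,L)$-matchings for edges $u_1 v, u_2 v$ both exist and together exhaust $L(v)$. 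One verifies that $M^*$ is indeed a matching, and that $G$ is $(H, L)$-colorable iff $G^*$ is $(H^*, L^*)$-colorable. Hence $G^*$ is not DP $h$-colorable, so some $h|_U$-minimal subgraph $\tilde G$ spans a subset $U \subseteq V(G^*)$.

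If $\{u_1, u_2\} \not\subseteq U$ or $\tilde G$ does not use the new copy of $u_1 u_2$, then $\tilde G$ (with cover induced from $(H, L)$) is a proper subgraph of $G$ that is not DP $h$-colorable, contradicting $G$'s $h$-minimality. So $u_1, u_2 \in U$ and $\tilde G$ uses the new edge. Setting $U' := U \cup \{v\}$, routine computation gives $\rho_{G, h}(U') = \rho_{G, h}(U) - 6$ and $\rho_{G^*, h}(U) - \rho_{G, h}(U) \in \{-5, -6\}$, depending on whether $u_1 u_2 \in E(G)$. Since $\rho_{\tilde G, h}(U) \geq \rho_{G^*, h}(U)$ (edge potentials being nonpositive), if $\tilde G$ is not exceptional then minimality of $G$ forces $\rho_{\tilde G, h}(U) \leq -1$, and the chain yields $\rho_{G, h}(U') \leq -1$, contradicting Observation~\ref{obs:geq0} or the assumption $\rho_h(G) \geq 0$.

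The main obstacle is the exceptional case, where $\tilde G$ is a $4$-Ore graph on $3s + 1 \leq 10$ vertices with $h \equiv 3$ on $U$. Since $4$-Ore graphs are simple, $\tilde G$'s copy of $u_1 u_2$ has multiplicity one, forcing $u_1 u_2 \notin E(G)$; moreover, if $\tilde G \subsetneq G^*[U]$ then the additional missing edges push $\rho_{G, h}(U') \leq -3 - s < 0$, a contradiction. Otherwise $\tilde G = G^*[U]$, giving $\rho_{G, h}(U') = 2 - s$ exactly. The case $s = 3$ contradicts Observation~\ref{obs:geq0} directly. For $s \in \{1, 2\}$ with $U' \subsetneq V(G)$, Lemma~\ref{lem:j(k-2)} requires $\rho_{G, h}(U') \geq 2$, another contradiction. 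Finally, for $s \in \{1, 2\}$ with $U' = V(G)$, the graph $G$ is obtained by subdividing a single edge of $\tilde G$ (either $K_4$ or the Moser spindle) with $v$; in each case $G$ still contains a $K_4^-$ subgraph whose triangle consists of three vertices of degree $3$ with $h = 3$ — for the Moser spindle, this uses the existence of two edge-disjoint $K_4^-$ subgraphs which guarantees at least one survives any single subdivision — which violates Lemma~\ref{dia}. Thus $d(v) = 3$, completing the proof.
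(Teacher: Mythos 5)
Your proof is correct and shares the paper's central idea: replace $v$ by an auxiliary edge $u_1 u_2$ with a matching that encodes ``the two forbidden colors exhaust $L(v)$.'' But you take a genuinely cleaner route than the paper. The paper first passes to $G' = G - v$ with reduced list sizes $h'$, extracts a vertex set $S$ carrying an $h'$-minimal subgraph, and splits on $s = |N(v)\cap S|$; this forces a separate Case 2 ($s=1$), handled via the terminal-subgraph lemma. You instead jump straight to $G^* = (G-v)+u_1u_2$ and locate an $h$-minimal $\tilde G$ there, which eliminates the $s=1$ scenario entirely and collapses the paper's two cases into one. Inside the exceptional case you also diverge slightly: for the $K_4$ subcase you invoke Lemma~\ref{dia} (the $K_4^-$ obstruction), whereas the paper invokes Theorem~\ref{thm:deg-choosable} (GDP degree-colorability); both are legitimate. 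The only soft spot is the phrase ``forcing $u_1 u_2 \notin E(G)$'' --- the simplicity of $\tilde G$ does not by itself force this, since $\tilde G$ could simply omit the extra parallel copies. What actually happens is: if $u_1 u_2 \in E(G)$ then $\tilde G \subsetneq G^*[U]$, and your next sentence's potential bound $\rho_{G,h}(U') \le -3-s$ handles that case; so the conclusion $u_1 u_2 \notin E(G)$ should be drawn after disposing of the strict-subgraph case rather than asserted before it. This is a presentation glitch, not a gap --- the computation you give does close the loop.
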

\begin{proof}
   Suppose that $h(v) = 2$. 
   Then, $\rho_h(v) = 4$, so Lemma~\ref{lem:j(k-2)} implies that $v$ is not in any parallel  pair. 
    By Lemma~\ref{lem:j(k-2)}, $d_G(v) \leq 3$.
   For each $w \in N(v)$, Lemma~\ref{lem:no1} implies that $h(w) \in \{2,3\}$.

We write $G' = G - v$.
For each  $w \in N(v)$, we write $h'(w) = h(w) - 1$, and we write
$h'(w)=h(w)$ for $w\not\in N(v)$.
By Observation~\ref{obs:reducible}, $G'$ is not DP $h'$-colorable. Therefore,
 $G'$ has a vertex subset $S$ for which $G'[S]$ has a spanning $h'$-minimal subgraph.
As $G'$ is a proper subgraph of $G$, it follows that $h$ and $h'$ do not agree on $U'$; therefore, $S$ contains some $w \in N(v)$. 
As $h'(w) \leq 2$,
$\rho_{G',h'}(S) \leq -1$.

 Suppose $|N(v) \cap S| = s$, so $1 \leq s\leq 3$. 
 Furthermore,
 \begin{equation}
 \label{eqn:type1}
 \rho_{G,h}(S+v) \leq -1 + 4 + s((8-4) -5) = 3 - s.
\end{equation}
If $s = 3$, then $d_G(v) = 3$, and the proof is complete. 
Otherwise, we consider two cases.

 {\bf Case 1:} Suppose $s = 2$. 
 In this case, (\ref{eqn:type1}) implies that 
 $\rho_{G,h}(S+v) \leq 1$, so Lemma~\ref{lem:j(k-2)} implies that $V(G) = S+v$.
 We write $N(v) = \{u,w\}$.
We assume without loss of generality that $\{1_v 1_u, 2_v 2_u, 1_v 1_w, 2_v 2_w \} \subseteq E(H)$.

Obtain $G_1$ from $G'$ by adding an edge $uw$.
Also, obtain a cover $(H',L)$ of $G_1$ from $H$ by deleting $L(v)$ and then adding the matching $\{1_u2_w,2_u1_w\}$. 
If $G_1$ has an $(H',L)$-coloring $f$, then we can extend $f$ to $v$, because if $f(u)$ is adjacent to the color $i_v \in L(v)$,
then (due to our new edge $uw$) $f(w)$ is not adjacent to $(3-i)_v$.
 Therefore, 
 $G_1$ has no $(H',L)$-coloring and hence has a vertex subset $U$ for which $G_1[U]$ has a spanning $h$-minimal subgraph. 
 As $G_1[U]$ is not a subgraph of $G$, $U$ contains both $u$ and $w$.

As $G$ is a minimum counterexample,
 $\rho_{G_1,h}(U) \leq -1$, or $(G_1[U],h)$ is exceptional.
First, we suppose that $\rho_{G_1,h}(U) \leq -1$.
We observe that
$G[U+v]$ is constructed by deleting an edge $uw$ from $G_1[U]$, adding a vertex $v$ with $h(v) = 2$, and adding two edges $vu$ and $vw$; hence,
 \[\rho_{G,h}(U + v) \leq -1 + 6 - 2(5) + 4 = -1,\]
 contradicting Observation~\ref{obs:geq0}. 
 Therefore, $(G_1[U], h)$ is exceptional. 
We consider several cases.

 \begin{itemize}

\item  If $G_1[U]$ is a $K_4$, then every vertex in $G$ is low. 
Furthermore, $G$ is  not a GDP-tree.
Thus, Theorem~\ref{thm:deg-choosable} implies that $G$ is DP $h$-colorable, a contradiction.

\item If $G_1[U]$ is a $4$-Ore graph on $7$ vertices,
then $G_1[U]$ is isomorphic to the Moser spindle.
Therefore, $\rho_{G_1,h}(U) = 1$ by Observation~\ref{obs:ore-potential}, so $U = V(G_1) = V(G) - v$, and hence $V(G) = U+v$.
We note that $G_1$ has two vertex-disjoint $3$-cycles consisting of vertices of degree $3$, and each of these $3$-cycles belongs to a $K_4^-$-subgraph of $G_1[U]$.
At least one of these $3$-cycles belongs to a $K_4^-$ subgraph of $G$, contradicting Lemma~\ref{dia}.

\item If $G_1[U]$ is a $4$-Ore graph on $10$ vertices, then $|E_{G_1}(u, w)| = 1$; therefore,
\[\rho_G(U+v) \leq \rho_{G_1}(U) + 5 + 4 - 2(5) = 0 - 1 =-1,\]
a contradiction.
\end{itemize}

{\bf Case 2:} Suppose $s = 1$.
In this case, $\rho_{G,h}(S+v) \leq 2$.
By Observation~\ref{obs:list-degree}, $d_G(v) \geq h(v) = 2$. As $1 = s = d_S(v)$, $v$ therefore has a neighbor $w \in V(G) \setminus S$. 
Therefore, $S+v$ is a proper subset of $V(G)$ with potential at most $2$,
and therefore Lemma~\ref{lem:j(k-2)} implies that $E_G(S+v, \overline{S+v}) = \{vw\}$.

Now, we observe that by Lemma~\ref{lem:j(k-2)}, $\rho_{G,h}(S+v) = 2$, implying that $\rho_{G,h}(S) = \rho_{G,h}(S+v) - 4 + 5 = 3$.
Additionally, Lemma~\ref{lem:j(k-2)} implies that $\rho_{G,h}(V(G) \setminus S) \geq 2$, implying that 
$\rho_{G,h}(V(G) \setminus (S+v)) \geq 
\rho_{G,h}(V(G) \setminus S) + 5 - 4 \geq 3$.
Then, $S$ and $V(G) \setminus (S+v)$ are disjoint terminal subgraphs of $G$, both with potential at least $3$ and neither containing $v$, contradicting Lemma~\ref{lem:terminal}.
\end{proof}

\subsection{A special set $S_0^*$ of vertices in $G$}

An {\em edge-block} in a multigraph $G$ is an inclusion maximal connected subgraph $G'$ of $G$ such that
either $|V(G')|=2$ or $G'$ has no cut edges. In particular, every cut edge forms an edge-block, and each connected graph decomposes into edge-blocks.

Define a special subset $S^*_0 \subseteq V(G)$ as follows. If $G$ has no cut edges, then $S^*_0=V(G)$. Otherwise, we fix a smallest pendent edge-block $B^*$ of smallest potential
  and let $S^*_0=V(B^*)$. By Lemma~\ref{lem:no1} and Observation~\ref{obs:list-degree}, a cut edge cannot be a pendent edge-block, so $S^*_0$ is  well-defined.

    If $B^* \subsetneq G$, then
    since $B^*$ is pendent, there are $x^*_0\in S^*_0$ and $y^*_0\in V(G)-S^*_0$ such that $x^*_0y^*_0$ is the unique edge connecting $B^*$ with the rest of $G$. Fix these $B^*$, $S^*_0$, $x^*_0,y^*_0$.
  By definition, $B^*$ is $2$-edge-connected.

The rest of our proof focuses mainly on the set $S_0^*$. 
We first show that no set of low vertices in $S^*_0$ induces a cycle in $G$, and then we use a discharging argument on the vertices of $S_0^*$ to reach a contradiction. One useful property of $S_0^*$ is that since $B^* = G[S_0^*]$ is an edge-block, each proper subset $S \subsetneq S_0^*$ satsifes $|E_G(S, \overline S)| \geq 2$, and hence $\rho_{G,h}(S) \geq 3$ by Lemma~\ref{lem:j(k-2)}. 
The following lemma shows that the same lower bound holds for any subset $S \subseteq V(G)$ where $S\cap S_0^*$ is a nonempty proper subset of $S^*_0$.

\begin{lemma}\label{lem:3-in-2}
If $S \subseteq V(G)$ satisfies $S \cap S_0^* \not \in \{\emptyset,  S_0^*\}$, 
then $\rho_{G,h}(S) \geq 3$. 
\end{lemma}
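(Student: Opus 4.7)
The approach is a one-step application of the $2$-edge-connectivity of $B^* = G[S_0^*]$ combined with Lemma~\ref{lem:j(k-2)}. Write $A = S \cap S_0^*$; by hypothesis, $A$ is a nonempty proper subset of $S_0^*$. The authors have already observed (in the paragraph immediately preceding the lemma) that $B^*$ is $2$-edge-connected: either $S_0^* = V(G)$ and $G$ has no cut edges, or $B^*$ is a pendent edge-block which, by Lemma~\ref{lem:no1} and Observation~\ref{obs:list-degree}, cannot consist of a single cut edge. Therefore $E_{B^*}(A, S_0^* \setminus A)$ contains at least two edges.

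Since $A \subseteq S$ and $S_0^* \setminus A \subseteq \overline{S}$, every such edge of $B^*$ lies in $E_G(S, \overline{S})$. I would then split into two cases based on whether those two edges span the same vertex pair. If they do, then $E_G(S, \overline{S})$ contains a parallel pair, so Lemma~\ref{lem:j(k-2)} applied with $i \geq 1$ gives $\rho_{G,h}(S) \geq 4 + 1 = 5$. If they span two distinct vertex pairs, then $E_G(S, \overline{S})$ contains at least two simple pairs, so Lemma~\ref{lem:j(k-2)} with $j \geq 2$ gives $\rho_{G,h}(S) \geq 2 + 1 = 3$. Either way, $\rho_{G,h}(S) \geq 3$, as required.

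There is no substantive obstacle: the statement is essentially a corollary of the setup. The only point deserving a brief check is that the boundary edges of $A$ inside $B^*$ really do remain boundary edges of $S$ in all of $G$, but this is immediate from the definition $A = S \cap S_0^*$, which forces $S_0^* \setminus A$ to be disjoint from $S$.
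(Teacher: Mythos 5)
Your proof is correct, and it is genuinely simpler than the paper's. The essential observation is that the $2$-edge-connectivity argument applies directly to $A = S \cap S_0^*$, no matter whether $S$ spills outside $S_0^*$: the $\geq 2$ edges of $E_{B^*}(A, S_0^* \setminus A)$ all lie in $E_G(S,\overline S)$ because $A \subseteq S$ and $S_0^* \setminus A = S_0^* \setminus S \subseteq \overline{S}$, and then Lemma~\ref{lem:j(k-2)} immediately yields $\rho_{G,h}(S) \geq 3$. The paper instead first reduces to $G[S]$ connected, handles the subcase $S \subseteq S_0^*$ by essentially your argument, and then for the complementary subcase (where $S$ contains $y_0^*$) proves three auxiliary potential bounds --- that $\rho_h(G - S_0^*) = 3$, that proper subsets of $S_0^*$ containing $x_0^*$ have potential $\geq 5$, and that proper subsets of $V(G')$ containing $y_0^*$ have potential $\geq 5$ --- before assembling them through the submodularity decomposition $\rho_{G,h}(S) = -5 + \rho_{G,h}(S \cap V(G')) + \rho_{G,h}(S \cap S_0^*)$. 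Your route makes all of that unnecessary; since these intermediate bounds do not appear to be reused elsewhere, nothing is lost. One small imprecision in your wording: when the two boundary edges land on distinct vertex pairs, you say $E_G(S,\overline S)$ has ``at least two simple pairs,'' but one or both of those pairs could in fact be parallel. The correct statement is that $i + j \geq 2$ across pair types, which still gives $4i + j + 1 \geq i + j + 1 \geq 3$, so the conclusion survives; this is a matter of phrasing rather than a gap.
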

\begin{proof}
If $G[S]$ is not connected, then it suffices to prove the lemma for a connected component $S'$ of $G[S]$ satisfying $V(S') \cap S_0^* \not \in \{\emptyset, S_0^*\}$.
Therefore, we assume that $G[S]$ is connected.
If $S \subseteq S_0^*$, then as $B^*$ is an edge-block,
$|E_G(S, S_0^* \setminus S)| \geq 2$. Then, by Lemma~\ref{lem:j(k-2)}, $\rho_{G,h}(S) \geq 3$, and we are done. Therefore, we assume that $S_0^* \neq V(G)$ and
$S$ contains $y_0^*$.

    We write $G' = G - S_0^*$. 
    By definition, 
    $S_0^*$ is joined to $G'$ by a cut edge $x_0^*y_0^*$, where $x_0^* \in S_0^*$ and $y_0^* \in V(G')$.

    We first show that $\rho_{h}(G') = 3$. Let $h'$ be defined on $V(G')$ so that $h'(v) = h(v)$ for $v \in V(G')  - y_0^*$, and $h'(y_0^*) = h(y_0^*) - 1$. By Observation~\ref{obs:reducible}, $G'$ is not DP $h'$-colorable. 
    Therefore, $G'$ has some vertex subset $X$ for which $G'[X] = G[X]$ has a spanning $h'$-minimal
    subgraph.
    As $X$ is a proper subset of $V(G)$,
    $h'$ and $h$ do not agree on $X$; therefore,
     $X$ contains $y_0^*$. As $h'(y_0^*) < 3$,
    $\rho_{G,h'}(X) \leq -1$. Thus,
    \[\rho_{G,h}(X) \leq \rho_{G,h'}(X) + 4 \leq 3.\]
    We also note that by Lemma~\ref{lem:2or3},
    $\rho_{G,h}(X \cup S_0^*) = \rho_{G,h}(X) + \rho_{G,h}(S_0^*) - 5 \leq 1$,
    so Lemma~\ref{lem:j(k-2)} implies that $X \cup S_0^* = V(G)$.
    Therefore, $X = V(G')$, and hence $\rho_h(G') \leq 3$.

    Now, by Lemma~\ref{lem:j(k-2)}, $\rho_h(G') \geq 2$. If $\rho_h(G') = 2$, then by Lemma~\ref{lem:no2in2}, $G'$ is an edge-block of $G$. 
    As $B^*$ was chosen to have minimum potential, it follows that $\rho_h(B^*) = 2$. Then, $\rho_h(G) = \rho_h(B^*) + \rho_h(G')-5 = -1$, contradicting Observation~\ref{obs:geq0}. Therefore, $\rho_h(G') = 3$.

    Now, we show that if $A \subsetneq S_0^*$ contains $x_0^*$, then $\rho_{G,h}(A) \geq 5$. 
    As $B^*$ is an edge-block, $|E_G(A,S_0^* \setminus A)| \geq 2$. 
    As $x_0^* y_0^*$ is also incident with $A$, Lemma~\ref{lem:j(k-2)} implies that $\rho_{G,h}(A) \geq 4$.
    Now, if $\rho_{G,h}(A) = 4$, then we write $U' = V(G') \cup A$, and we observe that $\rho_{G,h}(U') = \rho_{h}(G') + \rho_{G,h}(A) - 5 = 2$. 
    Therefore, by Lemma~\ref{lem:j(k-2)}, $U'$ is joined to $V(G) \setminus U'$ by a cut edge, 
    which contradicts the assumption that $B^*$ is an edge-block of $G$.
    Therefore, $\rho_{G,h}(A) \geq 5$.

    Next, we show that if $A \subsetneq V(G')$ contains $y_0^*$, then $\rho_{G,h}(A) \geq 5$. 
     As $A\cup S_0^* \neq V(G)$, $S_0^* \subsetneq A \cup S_0^*$, and $|E_G(S_0^*, \overline{S_0^*})|=1$, Lemma~\ref{lem:no2in2} implies $\rho_{G,h}(A) + \rho_{G,h}(S_0^*) - 5 = 
    \rho_{G,h}(A \cup S_0^*) \geq 3$. 
     Therefore, by Lemma~\ref{lem:2or3}, $\rho_{G,h}(A) \geq 8 - \rho_{G,h}(S_0^*) \geq 5$.

    Finally, consider the set $S$. 
    As $S \not \subseteq S_0^*$,
    $S$ contains both $x_0^*$ and $y_0^*$. 
    As $S \cap S_0^* \neq S_0^*$, 
    \[\rho_{G,h}(S) = -5 + \rho_{G,h}(S \cap V(G')) + \rho_{G,h}(S \cap S_0^*) \geq -5 + 3 + 5 = 3.\]
    This completes the proof.
\end{proof}

\subsection{Subgraphs induced by low vertices}

Recall that a vertex $v \in V(G)$ is \emph{low} if $h(v) = d(v)$.

\begin{lemma}
\label{lem:low-3}
    Every low vertex $v$ in $G$ satisfies $h(v) = d(v) = 3$.
\end{lemma}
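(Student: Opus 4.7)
The plan is to argue that Lemma~\ref{lem:low-3} is essentially a direct corollary of the preceding structural observations about the values $h$ can take on vertices of $G$.

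First, I recall the definition: a vertex $v$ is low precisely when $h(v) = d_G(v)$. Since by assumption $h$ maps into $\{0,1,2,3\}$, it suffices to rule out the values $h(v) \in \{0,1,2\}$ for a low $v$, since then we are forced into $h(v) = 3$, which together with lowness gives $d(v) = 3$.

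Ruling out the small values of $h(v)$ proceeds one value at a time. The value $h(v) = 0$ is immediately excluded by Observation~\ref{obs:zero}. The value $h(v) = 1$ is excluded by Lemma~\ref{lem:no1}. For the remaining case $h(v) = 2$, I plan to invoke Lemma~\ref{lem:no2}, which forces $d_G(v) = 3$; but since $v$ is assumed low, we would also have $d_G(v) = h(v) = 2$, a contradiction. Hence $h(v) = 2$ is impossible for a low vertex.

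Putting these three exclusions together with the range $\{0,1,2,3\}$ of $h$ leaves only $h(v) = 3$, and then lowness gives $d_G(v) = 3$ as required. I do not expect any genuine obstacle here; the lemma is simply recording, for later use in the discharging section, that once $v$ is low we have $h(v) = d_G(v) = 3$, and everything needed has already been proved.
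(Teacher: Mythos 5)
Your proof is correct and follows exactly the paper's argument: exclude $h(v)\in\{0,1\}$ via Observation~\ref{obs:zero} and Lemma~\ref{lem:no1}, and note that $h(v)=2$ forces $d(v)=3$ by Lemma~\ref{lem:no2}, so such a vertex cannot be low. No differences worth noting.
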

\begin{proof}
    By Observation~\ref{obs:zero}, no $v \in V(G)$ satisfies $h(v) = 0$.
    By Lemma~\ref{lem:no1}, no $v \in V(G)$ satisfies $h(v) = 1$.
    If $v \in V(G)$ satisfies $h(v) = 2$, then Lemma~\ref{lem:no2} implies that $d(v) = 3$, and hence $v$ is not low. 
\end{proof}

Let $\mathcal B$ be the subgraph of $G$ induced by low vertices in $G$,
and let ${\mathcal B}_0$ be the subgraph of $\mathcal B$ induced by the vertices of $\mathcal B$ in $S^*_0$.
By Lemma~\ref{lem:ERT}, each block in $\mathcal B$ is a multiple of either a complete graph or a cycle.
As $G$ is $K_4$-free, we hence assume that each block of $\mathcal B$ is a single vertex or a multiple of an edge or a cycle. 
In particular, each such block is regular.
Given an induced cycle $C \subseteq \mathcal B$
 along with distinct vertices $u,u' \in N(C) \setminus V(C)$, we write $F(C,u,u')$ for the multigraph obtained from $G - V(C)$ by adding an edge $uu'$.

\begin{lemma}\label{comp}
If $C$ is an induced cycle of $\mathcal B$, then the multigraph $F(C,u,u')$ is not DP $h$-colorable.
\end{lemma}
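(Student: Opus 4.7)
The approach is to argue by contradiction. Suppose $F := F(C, u, u')$ is DP $h$-colorable. Since each $v \in V(C)$ is low, Lemma~\ref{lem:low-3} gives $h(v) = d_G(v) = 3$, and because $C$ is induced in $\mathcal{B}$, each $v \in V(C)$ has two neighbors in $C$ and a unique outside neighbor $\phi(v) \in V(G) \setminus V(C)$, with $u, u' \in \phi(V(C))$. I will construct an $h$-cover $(H^+, L^+)$ of $F$ such that any $(H^+, L^+)$-coloring of $F$ produces an $(H, L)$-coloring of $G$, contradicting our standing assumption that $G$ is not $(H, L)$-colorable.

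Specifically, set $L^+ := L|_{V(F)}$ and take $H^+$ to be $H[L(V(F))]$ together with a matching $M \subseteq L(u) \times L(u')$ representing the new edge $uu' \in E(F)$. Let $M$ consist of all pairs $(c, c') \in L(u) \times L(u')$ such that every $(H, L)$-coloring $g$ of $G - V(C)$ with $g(u) = c$ and $g(u') = c'$ fails to extend to an $(H, L)$-coloring of $G$. If $(H^+, L^+)$ admits a coloring $f^*$, then $(f^*(u), f^*(u')) \notin M$ by properness at $uu'$, so by definition of $M$, some $(H, L)$-coloring $g$ of $G - V(C)$ with $g(u) = f^*(u)$, $g(u') = f^*(u')$ extends to an $(H, L)$-coloring of $G$, the desired contradiction.

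The main obstacle is verifying that $M$ is a matching, so that $(H^+, L^+)$ is indeed a valid $h$-cover. For this, I would appeal to Lemma~\ref{lem:cycle-cover}. Given an $(H, L)$-coloring $g$ of $G - V(C)$, the residual cover on $V(C)$ has list sizes in $\{2, 3\}$, with size $2$ at $v$ precisely when $g(\phi(v))$ is matched in $H$ to a color of $L(v)$. A greedy argument around $C$ starting at any vertex whose residual list has size $3$ shows that $g$ extends unless every residual list on $V(C)$ has size exactly $2$; in that case, Lemma~\ref{lem:cycle-cover} forces the residual $2$-cover on $C$ to be either two vertex-disjoint copies of $C$ (if $|V(C)|$ is odd) or a single cycle of length $2|V(C)|$ (if $|V(C)|$ is even). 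These bad structures are rigid: fixing the blocked color at one vertex of $C$ determines, up to a two-fold ambiguity in the odd case, the blocked colors at every other vertex of $C$. Consequently, $g(u) = c$ determines the blocked color at each $v \in \phi^{-1}(u)$, which propagates around $C$ through the rigid bad structure to pin down the blocked color at each $v \in \phi^{-1}(u')$, thereby forcing $g(u') = c'$ essentially uniquely; this is what makes $M$ a matching. The delicate points are handling the case when $\phi$ is not injective (so that $u$ or $u'$ is the outside neighbor of several vertices of $C$) and reconciling the two-fold ambiguity in the odd case with the matching property, both of which I expect to resolve through careful case analysis on the parity of $|V(C)|$ and the fibers of $\phi$.
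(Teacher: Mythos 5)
Your high-level plan is the same as the paper's: construct an $h$-cover $(H^+,L^+)$ of $F=F(C,u,u')$ that admits no coloring, and conclude that $F$ is not DP $h$-colorable. The difference is how you define the matching $M$ representing the new edge $uu'$, and this is where the proposal breaks.

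Your definition — $(c,c')\in M$ iff \emph{every} $(H,L)$-coloring $g$ of $G-V(C)$ with $g(u)=c$, $g(u')=c'$ fails to extend — does not give a matching, and the failure has nothing to do with the delicate cycle-cover analysis you flag at the end. It is much blunter: under the standing assumption that $G$ has no $(H,L)$-coloring, \emph{every} coloring $g$ of $G-V(C)$ fails to extend to $C$. So the ``for all $g$\dots'' condition holds for every pair $(c,c')$ whatsoever — it holds nonvacuously when some such $g$ exists, and vacuously when none does. Thus $M = L(u)\times L(u')$, the complete bipartite graph, which is never a matching. The first sentence of your fix sketch (``$g$ extends unless every residual list has size exactly $2$'') tacitly assumes that extendable colorings exist, which is exactly what cannot happen here.

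The repair is to abandon the coloring-based quantifier and define $M$ structurally, which is what the paper does. Since $h\equiv 3$ on $V(C)$ and the inter-list matchings in $H[L(V(C))]$ are perfect, $H[L(V(C))]$ is a $2$-regular graph on $3\ell$ vertices decomposing into cycles of length $\ell a_i$ with $\sum a_i=3$; combined with Lemma~\ref{lem:cycle-cover} applied to the residual cover of some nonextending $f$, this forces $H[L(V(C))]$ to have a unique decomposition into a $C_\ell$ plus either a $C_{2\ell}$ (even $\ell$) or two more $C_\ell$'s (odd $\ell$). For every coloring $g$ of $G-V(C)$, the deleted (``blocked'') colors $\{L(v)\setminus L_g(v) : v\in V(C)\}$ must form exactly one $C_\ell$-component $K_m$, so $g(u)$ and $g(u')$ must each be adjacent in $H$ to $K_m\cap L(v)$ for \emph{every} $v$ in $\phi^{-1}(u)$ (resp.\ $\phi^{-1}(u')$). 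Defining $M$ as the set of pairs $(c,c')$ with this property for a common $K_m$ gives a matching precisely because each $K_m\cap L(v)$ has at most one $H$-neighbor in $L(u)$ (and likewise in $L(u')$), so a color cannot be ``strongly adjacent'' to two distinct components, nor can two colors be strongly adjacent to the same one. Note also that there is no ``two-fold ambiguity in the odd case'' as you suggest: once the blocked color at a single vertex of $C$ is known, its component $K_m$ is determined and hence so are all other blocked colors; parity only affects whether the complement of $K_m$ is one $C_{2\ell}$ or two $C_\ell$'s, not the determination of $K_m$ itself.
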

\begin{proof}
Let $G'=G-V(C)$.
As $G$ is $h$-minimal, $G'$ has an $(H,L\vert_{V(G')})$-coloring $f$.
 For each $v \in V(C)$, we write $L_f(v)$ for the subset of $L(v)$ consisting of colors with no neighbor in $f$.
We write $H_f = H[ L_f(V(C))]$.
As $|L_f(v)| \geq 2$ for each $v \in V(C)$ and $C$ has no $(H_f,L_f)$-coloring, it follows from Lemma~\ref{lem:cycle-cover} that $(H_f,L_f)$ is $2$-regular. 
By Lemma~\ref{lem:low-3}, $h(v) = 3$ for each $v \in V(C)$;
therefore, $H[L(V(C))] -V(H_f)$ is isomorphic to $C$.
As $f$ is chosen arbitrarily, it follows that for every $(H,L\vert_{V(G')})$-coloring $f$ of $G'$, $f(u)$ and $f(u')$ are adjacent to a component of $H[L(V(C))]$ isomorphic to $C$.

Now, we construct an $h$-cover $(H',L\vert_{V(G')})$ of $F(C,u,u')$ as follows. 
We begin with $(H,L)$, and we remove $L(V(C))$ from $H$. 
Then, we add a matching between $L(u)$ and $L(u')$ so that $c \in L(u)$ and $c' \in L(u')$ are adjacent if and only if $c$ and $c'$ are adjacent to a common component of $H[L(V(C))]$.
This is possible, as $h(v) = 3$ for each $v \in V(C)$, so for each adjacent pair $v,v' \in V(C)$, $L(v)$ and $L(v')$ are joined by a perfect matching.
Then, if $G'$ has an $(H',L\vert_{V(G')})$-coloring $f'$, the colors $f'(u)$ and $f'(u')$ are not both adjacent to a component of $H[L(V(C))]$ isomorphic to $C$, contradicting our observation above.
Therefore, $F(C,u,u')$ is not DP $h$-colorable.
\end{proof}

By Lemma~\ref{comp}, for each induced cycle  $C$ of $\mathcal B$, and any two distinct vertices  $u,u'\in N(C) \setminus V(C)$, 
there is an $h$-minimal multigraph $G(C,u,u')$  contained in $F(C,u,u')$.
As $G$ is $h$-minimal, $G(C,u,u')$ is not a subgraph of $G$; therefore, $G(C,u,u')$ contains $u$ and $u'$.

\subsection{Low cycles in $B^*$}

We aim to show that $\mathcal B_0$ is acyclic.
To this end, we fix a shortest induced cycle $C\subseteq\mathcal B_0$.
 We write $V(C)=\{u_1, \ldots, u_\ell\}$, so that $C$ is an induced cycle in $B^*$ of length $\ell$.
By Lemma~\ref{lem:low-3}, every vertex $u_i \in V(C)$ satisfies $h(v) = d(v) = 3$, so $u_i$ has a unique neighbor $w_i\in V(G) \setminus V(C)$. 
Let $W=\{w_1, \ldots, w_\ell\}$, and note that $|W|\leq \ell$.
Also define $F_{i,j}=G(C,w_i,w_j)$ and $S_{i,j}=V(F_{i,j})$.

\begin{lemma}\label{notx0}
If $S^*_0 \neq V(G)$, then the vertex $x^*_0$ is not on $C$. In particular, $W\subseteq S^*_0$.
\end{lemma}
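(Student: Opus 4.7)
The plan is to argue by contradiction: suppose $x_0^* \in V(C)$, and after relabeling assume $x_0^* = u_1$. Since $x_0^* y_0^*$ is the unique edge from $S_0^*$ to $\overline{S_0^*}$ and $u_1$'s only off-$C$ neighbor is $w_1$, we must have $w_1 = y_0^*$; for every $i \geq 2$, the same cut-edge uniqueness forces $w_i \in S_0^*$. In particular $w_1 \neq w_2$ and both lie in $N(C) \setminus V(C)$, so Lemma~\ref{comp} applies and produces an $h$-minimal subgraph $F_{1,2} = G(C, w_1, w_2) \subseteq F(C, w_1, w_2)$ containing $w_1$ and $w_2$.

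The key structural observation is that because $u_1 = x_0^*$ is the only vertex of $S_0^*$ with a neighbor in $\overline{S_0^*}$, the graph $F(C,w_1,w_2) - w_1 w_2 = G - V(C)$ has $V(G) \setminus S_0^*$ completely disconnected from $S_0^* \setminus V(C)$. Thus $w_1 w_2$ is a cut edge of $F(C,w_1,w_2)$; since $F_{1,2}$ is connected and contains both $w_1 \in V(G) \setminus S_0^*$ and $w_2 \in S_0^* \setminus V(C)$, it must use $w_1 w_2$, and $w_1 w_2$ is also a cut edge of $F_{1,2}$. By minimality of $G$ as a counterexample to Theorem~\ref{thm:stronger}, either $(F_{1,2}, h)$ is exceptional or $\rho_h(F_{1,2}) \leq -1$; the exceptional case is ruled out immediately, since every $4$-Ore graph is $4$-critical and hence $2$-edge-connected.

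In the remaining case, let $F_1, F_2$ be the components of $F_{1,2} - w_1 w_2$, with $w_s \in V(F_s)$. Decomposing the potential across the cut edge gives $\rho_h(F_1) + \rho_h(F_2) = \rho_h(F_{1,2}) + 5 \leq 4$. Since each $F_s$ is a subgraph of $G$ on the vertex set $V(F_s)$ (possibly missing some edges of $G[V(F_s)]$), and extra edges only decrease potential, we get $\rho_{G,h}(V(F_s)) \leq \rho_h(F_s)$. But $V(F_2)$ is a nonempty subset of $S_0^* \setminus V(C) \subsetneq S_0^*$, so Lemma~\ref{lem:3-in-2} yields $\rho_{G,h}(V(F_2)) \geq 3$; and $V(F_1)$ is a nonempty proper subset of $V(G)$ disjoint from $S_0^*$, so Lemma~\ref{lem:j(k-2)} yields $\rho_{G,h}(V(F_1)) \geq 2$. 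Summing gives $5 \leq \rho_h(F_1) + \rho_h(F_2) \leq 4$, a contradiction. Hence $x_0^* \notin V(C)$, and then each $u_i \neq x_0^*$ satisfies $w_i \in S_0^*$ by the same cut-edge uniqueness, so $W \subseteq S_0^*$.

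I expect the main subtlety to be recognizing that both lower-bound tools are needed in tandem---Lemma~\ref{lem:j(k-2)} on the outside piece $V(F_1)$, where no vertex of $S_0^*$ appears, and Lemma~\ref{lem:3-in-2} on the inside piece $V(F_2) \subseteq S_0^* \setminus V(C)$---because neither bound alone beats the $\leq 4$ ceiling forced by $\rho_h(F_{1,2}) \leq -1$. A secondary point to handle carefully is verifying that the pair $(w_1, w_2)$ is actually admissible for Lemma~\ref{comp}, which relies on the observation that $w_i \in S_0^*$ for all $i \neq 1$, and therefore $w_1 = y_0^* \neq w_2$.
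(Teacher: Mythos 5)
Your proposal is correct and follows essentially the same approach as the paper's proof: both exploit that adding the edge between $y_0^*=w_1$ and a second neighbor of $C$ inside $S_0^*$ produces a cut edge in the resulting $h$-minimal graph, so it cannot be $4$-Ore and has potential at most $-1$, and both then contradict this by bounding the two pieces across the cut below by $2$ and $3$ respectively using Lemma~\ref{lem:j(k-2)} and the edge-block property of $B^*$ (you route the $S_0^*$-side bound through Lemma~\ref{lem:3-in-2}, which packages the same reasoning). The only organizational difference is that the paper separately disposes of the degenerate case where $V(C)$ has no neighbor outside $V(C)\cup\{y_0^*\}$, whereas your observation that $w_2\in S_0^*$ (hence $w_2\neq y_0^*$) shows a valid second vertex always exists and makes that case split unnecessary.
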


\begin{proof}
If $S_0^* = V(G)$, then the lemma holds. Hence,
we prove that if $S^*_0 \neq V(G)$, then the vertex $x^*_0$ is not in $C$. To this end,
suppose $x^*_0$ is in $C$.
Since $C$ is $2$-connected, $y^*_0\notin V(C)$. 
If there exists a vertex  $y_1^*\in N(V(C)) \setminus (V(C) \cup \{y^*_0\})$, 
let $G'=G(C,y^*_0,y_1^*)$. Recall that  $y_0^*,y_1^*\in V(G')$. Since $y^*_0y_1^*$ is a cut edge in $G'$, $(G',h)$ is not exceptional.
Therefore,
$\rho_h(G') \leq -1$, and hence $\rho_{G,h}(V(G'))\leq 4$. 
However, the multigraph $G[V(G')]$ is disconnected. 
As $ G[S^*_0]$ is an edge-block of $G$, $|E_G(V(G') \cap S^*_0, S^*_0 \setminus V(G'))| \geq 2$; therefore, by Lemma~\ref{lem:j(k-2)}, $\rho_{G,h}(V(G') \cap S^*_0) \geq 3$. Hence, $\rho_{G,h}(V(G') \setminus S^*_0) \leq 4-3 < 2$, contradicting Lemma~\ref{lem:j(k-2)}.

Suppose now that $V(C)$ has no  neighbor in $V(G) \setminus (V(C) \cup \{y^*_0\})$. 
Then every vertex $u\in V(C)-x_0^*$ satisfies $h(u) = 2$, contradicting Lemma~\ref{lem:low-3}. 
\end{proof}

\begin{lemma}\label{lem:no-low-2cycle}
$C$ is not a $2$-cycle.
\end{lemma}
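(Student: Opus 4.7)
The plan is to assume for contradiction that $C$ is a $2$-cycle with $V(C) = \{u_1, u_2\}$. By Lemma~\ref{lem:low-3}, each $u_i$ satisfies $d_G(u_i) = h(u_i) = 3$, so $u_1 u_2$ is a double edge and each $u_i$ has a unique external neighbor $w_i \in V(G) \setminus V(C)$. I will split into two cases depending on whether $w_1 = w_2$. For Case~1 ($w_1 = w_2 = w$), I analyze the triple $T = \{u_1, u_2, w\}$: direct computation gives $\rho_{G,h}(T) = 16 + \rho_h(w) + (1-12) + 2(1-6) = \rho_h(w) - 5$. If $h(w) \le 2$, then $\rho_{G,h}(T) \le -1$, contradicting Observation~\ref{obs:geq0} when $T = V(G)$ and Lemma~\ref{lem:j(k-2)} otherwise (since the only edges leaving $T$ issue from $w$). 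So $h(w) = 3$ and $\rho_{G,h}(T) = 3$; then $T$ is a triangle with one doubled edge, whose sole block is neither $K_n^t$ nor $C_n^t$ (edge multiplicities are not uniform), hence not a GDP-tree. Theorem~\ref{thm:deg-choosable} declares $T$ DP degree-colorable. Using that $\rho_{G,h}(T) = 3$ and Lemma~\ref{lem:j(k-2)} bound $d_G(w) \le 4$ with $w$'s external edges simple, I color $G - T$ first (possible by $h$-minimality) and extend via degree-colorability on $T$, noting that the induced lists have sizes at least $(3, 3, 2) = (d_T(u_1), d_T(u_2), d_T(w))$ (with an auxiliary argument to handle $d_G(w) = 4$ by choosing a coloring of $G - T$ that leaves $|L(w)| \ge 2$), contradicting $h$-minimality.

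For Case~2 ($w_1 \neq w_2$), I will apply Lemma~\ref{comp} to obtain an $h$-minimal $F_{1,2} \subseteq F(C, w_1, w_2) = G - V(C) + w_1 w_2$ containing $w_1, w_2$; set $S_{1,2} = V(F_{1,2})$ and $m = |E_G(w_1, w_2)|$. Minimality of $G$ forces either $(F_{1,2}, h)$ exceptional or $\rho_{F_{1,2}, h}(S_{1,2}) \le -1$. For $S = S_{1,2} \cup V(C)$, a direct potential computation gives
\[
\rho_{G,h}(S) = \rho_{F_{1,2}, h}(S_{1,2}) + \Delta, \qquad \Delta \in \{0, 1\},
\]
with $\Delta = 0$ exactly when $m = 0$: the new vertices of $V(C)$ together with the doubled $u_1 u_2$ and the two simple attachments contribute $16 - 11 - 10 = -5$, offset by $+5$ from deleting the added $w_1 w_2$ when $m = 0$, or by $+6$ when $m \ge 1$. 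In the first alternative, $\rho_{G,h}(S) \le 0$ contradicts Observation~\ref{obs:geq0} if $S = V(G)$, and contradicts Lemma~\ref{lem:3-in-2} otherwise since $V(C) \subseteq S_0^* \cap S$ makes $S \cap S_0^*$ nontrivial.

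In the exceptional alternative, $F_{1,2}$ is a simple $4$-Ore graph on $3s+1 \le 10$ vertices with $h \equiv 3$; simplicity forces $m = 0$, so $\rho_{G,h}(S) = 3 - s \in \{0, 1, 2\}$, and Lemma~\ref{lem:3-in-2} forces $S = V(G)$. I will then locate a $K_4^-$-subgraph of $G$ whose triangle is made of low vertices of degree $3$: for $s = 1$, $F_{1,2} - w_1 w_2$ is itself such a $K_4^-$; for $s \in \{2, 3\}$, the recursive DHGO construction of $F_{1,2}$ from copies of $K_4$ guarantees a $K_4^-$ with a low-vertex triangle disjoint from the edge $w_1 w_2$, and this $K_4^-$ survives the removal of $w_1 w_2$ together with the attachment of $V(C)$ (which touches only $w_1, w_2$). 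This contradicts Lemma~\ref{dia}. The hardest step is the $K_4^-$-location in the exceptional subcase for $s = 3$, where the possible positions of $w_1 w_2$ inside a $10$-vertex $4$-Ore graph must be enumerated, possibly via a variant of Lemma~\ref{ore-47}; a secondary obstacle is the $d_G(w) = 4$ subcase of Case~1, where the degree-coloring of $T$ does not extend automatically from an arbitrary coloring of $G - T$.
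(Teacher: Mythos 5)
Your proposal takes a genuinely different route from the paper. The paper proves Lemma~\ref{lem:no-low-2cycle} by a direct cover manipulation: it pins down the structure of $H[L(u_1)\cup L(u_2)]$ as a $C_4$ plus one extra edge, then forms a restricted cover $(H_1,L_1)$ of $G' = G-V(C)$ by deleting a single color from $L(w_1)$, extracts an $h_1$-minimal set $W_1\ni w_1$ of potential $\leq 3$ (and $W_2\ni w_2$ symmetrically), and closes via submodularity plus Lemma~\ref{lem:terminal}. Your proposal instead routes everything through the $F(C,w_1,w_2)$/Lemma~\ref{comp} machinery and the $4$-Ore potential bound of Lemma~\ref{i2}, in the style the paper reserves for $\ell\geq 3$.

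There are several genuine gaps. First, Lemma~\ref{comp} as proved in the paper relies on $H[L(V(C))]$ decomposing into components obtained from perfect matchings around a cycle of length $\geq 3$, so that ``$H[L(V(C))]-V(H_f)$ is isomorphic to $C$'' and the added edges form a matching; for a $2$-cycle the two parallel matchings between $L(u_1)$ and $L(u_2)$ merge components (the non-colorable configuration is $C_4+K_2$), and the construction of $(H',L')$ does not automatically yield a matching between $L(w_1)$ and $L(w_2)$ without a separate argument. The paper appears to deliberately avoid Lemma~\ref{comp} here for this reason. Second, in your first alternative with $m\geq 1$ the bound is only $\rho_{G,h}(S)\leq 0$, and if $S=V(G)$ this gives $\rho_h(G)=0$, which is \emph{not} a contradiction to Observation~\ref{obs:geq0}. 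Third, in the exceptional alternative with $s=1$ you get $\rho_{G,h}(S)=2$, which does not force $S=V(G)$ (Lemma~\ref{lem:3-in-2} only gives $S_0^*\subseteq S$); and without $S=V(G)$ you cannot conclude that the triangle of $F_{1,2}-w_1w_2$ consists of low vertices of $G$, so Lemma~\ref{dia} is not directly applicable. Finally, you yourself flag two unresolved steps (the $d_G(w)=4$ subcase of Case~1, and the $K_4^-$-location enumeration for $s=3$); these are not minor: neither is handled by any existing lemma, and the latter is precisely the kind of case analysis the paper sidesteps with the $W_1,W_2$ construction.
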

\begin{proof}
Suppose  the lemma is false, so $u_1$ and $u_2$ are joined by parallel edges. 
If $|E_G(u_1,u_2)| \geq 3$, then $\rho_{G,h}(V(C)) \leq 2(8) - 5 - 6 - 6 = -1$, contradicting Observation~\ref{obs:geq0}. Therefore, $|E_G(u_1,u_2)| = 2$.

Let $G'=G-V(C)$.
As $G$ is $h$-minimal, $G'$ has an $(H,L\vert_{V(G')})$-coloring $f$. 
 As $f$ does not extend to an $(H,L)$-coloring of $G$, 
 it follows that  $G[V(C)]$ does not have an $(H,L\vert_{V(C)})$-coloring using the elements of $L(u_1) \cup L(u_2)$ that are not adjacent to $f$.
In particular, $H[L(u_1)\cup L(u_2)]$ contains a $4$-cycle. 
Thus, we may assume 
$E(H[L(u_1)\cup L(u_2)])=\{1_{u_1}1_{u_2},1_{u_1}2_{u_2},2_{u_1} 1_{u_2},2_{u_1}2_{u_2},3_{u_1}3_{u_2}\}$ and that $H$ contains the edges $f(w_1)3_{u_1},f(w_2)3_{u_2}$.

Consider the cover $(H_1,L_1)$ of $G'$, where $L_1$ differs from $L$ on $G'$ only in that
$L_1(w_1)=L(w_1) \setminus \{f(w_1)\}$,
and $H_1$ is the subgraph of $H$ induced by $L_1(V(G'))$.
Define $h_1:V(G') \to \{0, 1,2,3\}$ so that $h_1(x) = |L_1(x)|$ for each $x \in V(G')$.
If $G'$ has an $(H_1,L_1)$-coloring $f_1$, then we can extend $f_1$ to $G$ by letting $f_1(u_1)=3_{u_1}$ and choosing as $f_1(u_2)$ a color in $\{1_{u_2},2_{u_2}\}$ not adjacent to $f_1(w_2)$. 
Thus $G'$ has no $(H_1,L_1)$-coloring.
Therefore, $G'$ has a vertex subset $W_1$ for which $G'[W_1]$ has a spanning $h_1$-minimal subgraph.
As $G$ is $h$-minimal, 
 $w_1\in W_1$. Since $h_1(w_1) \leq 2$, the minimality of the counterexample $G$ tells us that
$\rho_{G', h_1}(W_1) = \rho_{G,h_1}(W_1)\leq -1$. 
Then 
\[\rho_{G,h}(W_1)\leq \rho_{G,h_1}(W_1) +  \rho_{h}(w_1) - \rho_{h_1}(w_1) \leq -1+4=3.\] 
If $w_2\in W_1$, then
$\rho_{G,h}(W_1\cup  V(C))\leq 3+2(8)-3(5)-(6)=-2,$ contradicting Observation~\ref{obs:geq0}. Thus $w_2\notin W_1$.

By symmetry, there exists a set $W_2 \subseteq V(G')$ satisfying $\rho_{G,h}(W_2) \leq 3$, $w_2 \in W_2$, and $w_1 \not \in W_2$.
Hence, by Lemma~\ref{lem:submodularity}, 
\begin{eqnarray*}
\rho_{G,h}(W_1 \cup W_2) &=& \rho_{G,h}(W_1) + \rho_{G,h}(W_2) - \rho_{G,h}(W_1 \cap W_2) + \rho_{G,h}(E'),  
\end{eqnarray*}
where $E'=E_G(W_1 \setminus W_2, W_2 \setminus W_1)$.
Then,
\begin{eqnarray}
\notag
\rho_{G,h}(W_1 \cup W_2 \cup V(C)) &\leq& \rho_{G,h}(W_1) + \rho_{G,h}(W_2) - \rho_{G,h}(W_1 \cap W_2) + \rho_{G,h}(E') + 2(8) - 3(5)- 6  \\
\label{eqn:WuWv}
&= & \rho_{G,h}(W_1) + \rho_{G,h}(W_2) - 5 - \rho_{G,h}(W_1 \cap W_2) +\rho_{G,h}(E') \\
\notag
&\leq & 1 - \rho_{G,h}(W_1 \cap W_2) + \rho_{G,h}(E')\leq 1.
\end{eqnarray}
As $\rho_G(W_1 \cup W_2 \cup V(C)) < 2$, Lemma~\ref{lem:j(k-2)} implies that $V(G) = W_1 \cup W_2 \cup V(C)$. 
 Furthermore, if $W_1 \cap W_2 \neq \emptyset$, then $\rho_{G,h}(W_1 \cap W_2) \geq 2$ by Lemma~\ref{lem:j(k-2)}, so $\rho_h(G)\leq -1$ by~\eqref{eqn:WuWv}, which is a contradiction since $\rho_h(G)\geq 0$. 
 Thus, 
 $W_1$ and $W_2$ are disjoint, and no edge joins $W_1$ and $W_2$.
Therefore, $G[W_1]$ and $G[W_2]$ are both terminal subgraphs of $G$.

Now, if $\rho_{G,h}(W_1) + \rho_{G,h}(W_2) \leq 4$, then by~\eqref{eqn:WuWv}, $\rho_h(G) \leq -1$, contradicting Observation~\ref{obs:geq0}.
Otherwise, if $\rho_{G,h}(W_1) + \rho_{G,h}(W_2) \geq 5$, then as neither $u_1$ nor $u_2$ belongs to $W_1 \cup W_2$, Lemma~\ref{lem:terminal} is violated. 
Hence, we reach a contradiction, and the proof is complete.
\end{proof}

\begin{lemma}
\label{lem:no-low-triangle}
$C$ is not a $3$-cycle.
\end{lemma}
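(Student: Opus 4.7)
The plan is to adapt the strategy of Lemma~\ref{lem:no-low-2cycle}, applying Lemma~\ref{comp} to each of the three pairs $(w_i,w_j)$ of outside neighbors in order to replace the triangle $C$ by virtual edges.

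First I would observe that $w_1,w_2,w_3$ are pairwise distinct: if, say, $w_1=w_2$, then $\{u_1,u_2,u_3,w_1\}$ induces a $K_4^-$ in $G$ whose triangle $u_1u_2u_3$ consists of low vertices of degree~$3$, contradicting Lemma~\ref{dia}. By Lemma~\ref{comp}, for each pair $\{i,j\}\subseteq\{1,2,3\}$ the multigraph $F(C,w_i,w_j)=(G-V(C))+w_iw_j$ contains an $h$-minimal subgraph $F_{ij}:=G(C,w_i,w_j)$ with $\{w_i,w_j\}\subseteq S_{ij}:=V(F_{ij})$, and $F_{ij}$ must contain the added edge $w_iw_j$ (else it would be a subgraph of $G$). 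By the minimality of the counterexample $G$, either $\rho_h(F_{ij})\le -1$, or $(F_{ij},h)$ is exceptional so that $F_{ij}$ is $4$-Ore on at most $10$ vertices with $h\equiv 3$ on $S_{ij}$; since $4$-Ore graphs are simple, the exceptional case forces $w_iw_j\notin E(G)$, and whenever $w_iw_j\notin E(G)$ one has $\rho_{G,h}(S_{ij})=\rho_h(F_{ij})+5$.

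The heart of the argument treats the case where no $F_{ij}$ is exceptional, so $\rho_{G,h}(S_{ij})\le 4$ for each pair (assuming $w_iw_j\notin E(G)$; the remaining cases with $w_iw_j\in E(G)$ give a slightly weaker bound and are handled analogously). I would first show that $w_k\notin S_{ij}$ whenever $\{i,j,k\}=\{1,2,3\}$: otherwise, adjoining $V(C)$ to $S_{ij}$ contributes $3\cdot 8-3\cdot 5-3\cdot 5=-6$ (the three triangle edges plus the three matching edges $u_\ell w_\ell$), giving $\rho_{G,h}(V(C)\cup S_{ij})\le 4-6=-2$ and contradicting Observation~\ref{obs:geq0}. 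Consequently $S_{12}\cap S_{13}$ contains $w_1$ but not $w_3$; since Lemma~\ref{notx0} gives $w_1,w_3\in S_0^*$, the set $S_{12}\cap S_{13}\cap S_0^*$ is a nonempty proper subset of $S_0^*$, and Lemma~\ref{lem:3-in-2} yields $\rho(S_{12}\cap S_{13})\ge 3$. Submodularity (Lemma~\ref{lem:submodularity}) then gives $\rho(S_{12}\cup S_{13})\le 4+4-3=5$, and because $W\subseteq S_{12}\cup S_{13}$ the computation $\rho_{G,h}(V(C)\cup S_{12}\cup S_{13})\le 5-6=-1$ contradicts Observation~\ref{obs:geq0} (when this set equals $V(G)$) or Lemma~\ref{lem:j(k-2)} (otherwise).

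The main obstacle will be the exceptional case, in which some $F_{ij}$ is $4$-Ore on $3s+1\in\{4,7,10\}$ vertices with $\rho_{G,h}(S_{ij})\in\{7,6,5\}$, so that the raw potential bound alone is insufficient. If $F_{ij}\cong K_4$ and $w_k\notin S_{ij}$, then the four vertices of $S_{ij}$ all have degree $3$ in $G$, are low, and induce $K_4^-$, whose triangles violate Lemma~\ref{dia}. The most delicate configuration is when $F_{ij}$ is the Moser spindle and $w_k\in S_{ij}$: after verifying $w_k\notin N_{F_{ij}}(w_i)\cup N_{F_{ij}}(w_j)$ via potential considerations, I would apply Lemma~\ref{ore-47} to $G[V(C)\cup S_{ij}]$; if the outcome is $4$-Ore, Observation~\ref{obs:ore-potential} forces $V(G)=V(C)\cup S_{ij}$ and the resulting configuration contradicts Lemma~\ref{dia} or the bound $|V(G)|\le 10$, while if the outcome is DP $3$-colorable, a coloring extends to $G$ by the minimality of $G$, contradicting non-colorability. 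The $10$-vertex subcase and the residual configurations (such as $F_{ij}\cong K_4$ with $w_k\in S_{ij}$, or failures of the hypothesis of Lemma~\ref{ore-47}) are handled by analogous combinations of submodular potential estimates applied to pairs of $S_{ij}$'s and direct structural arguments using the rigid form of small $4$-Ore graphs.
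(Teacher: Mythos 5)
Your high-level plan is sound and uses essentially the same toolkit as the paper (Lemma~\ref{comp} to form the $F_{i,j}$, Lemma~\ref{dia} to force $w_1,w_2,w_3$ distinct, Lemma~\ref{lem:3-in-2} and Lemma~\ref{lem:submodularity} for the ``no exceptional'' case, Lemma~\ref{i2} and Lemma~\ref{ore-47} for the exceptional case). Your main case is correct as written when $w_iw_j\notin E(G)$ for every pair, and the deduction $w_k\notin S_{ij}$, followed by $\rho(S_{12}\cap S_{13})\ge 3$ and $\rho(S_{12}\cup S_{13}\cup V(C))\le -1$, matches the spirit of the paper's Case~2.1. The key organizational difference is that the paper branches first on $|E(G[W])|$ and only afterwards on exceptionality, whereas you branch on exceptionality throughout. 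That is a legitimate alternative, but it forces you to track the edge pattern inside $W$ in every branch, and this is where your write-up has gaps.

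Concretely, there are two places where your argument is not yet a proof. First, the ``remaining cases with $w_iw_j\in E(G)$ \dots handled analogously'' is not automatic: if say $w_1w_2\in E(G)$ then $\rho_{G,h}(S_{12})\le 5$ rather than $\le 4$, and your chosen union $S_{12}\cup S_{13}$ yields only $\rho(V(C)\cup S_{12}\cup S_{13})\le 0$, which is not a contradiction. One must instead use the edge-free pairs (or, when all three pairs carry edges, augment a single $S_{ij}$ by the remaining $w_k$ and use its two edges into $W$); the paper's Case~1 is precisely this bookkeeping, and it is not a cosmetic variation of your main computation. Second, and more seriously, the exceptional branch contains an unjustified structural claim: when $F_{ij}\cong K_4$ with $w_k\notin S_{ij}$ you assert that ``the four vertices of $S_{ij}$ all have degree $3$ in $G$, are low,'' but nothing so far rules out an extra edge from one of the two non-$w$ vertices of $S_{ij}$ to $V(G)\setminus(S_{ij}\cup V(C))$; the potential $\rho_{G,h}(S_{ij})=7$ together with Lemma~\ref{lem:j(k-2)} still permits several outgoing simple pairs. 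Likewise, ``verifying $w_k\notin N_{F_{ij}}(w_i)\cup N_{F_{ij}}(w_j)$ via potential considerations'' is vague; in the paper this condition is obtained not from potentials but from the Case~2 hypothesis $w_1w_2,w_2w_3\notin E(G)$ combined with an explicit sub-case $w_1w_3\notin E(G)$ (and the complementary sub-case $w_1w_3\in E(G)$ is disposed of with a separate parallel-pair potential estimate). So the exceptional half of your proof is at the moment a sketch with at least one false-looking claim, and the true argument requires the more careful case split on $|E(G[W])|$ and on whether $S_{1,2}=S_{2,3}$ that the paper carries out.
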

\begin{proof}
If $w_1 = w_2 = w_3$, then $V(C)\cup W$ induces a $K_4$ in $G$, a contradiction. 
By Lemma~\ref{dia}, we may assume that $w_1, w_2, w_3$ are all distinct.

{\bf Case 1:} $|E(G[W])|\geq 2$.
Without loss of generality, we may assume $w_1w_2, w_2w_3\in E(G)$. 
We write $F=F_{1,3}$ and $S=S_{1,3}$, so 
\[\rho_{G,h}(S \cup V(C) \cup \{w_2\}) \leq 
\begin{cases}
\rho_{G,h}(S) + 3(8) - 6(5) = \rho_{G,h}(S) - 6 & \mbox{ if $w_2\in S$} \\  
 \rho_{G,h}(S) + 4(8) - 8(5) = \rho_{G,h}(S) - 8 & \mbox{ if $w_2\not\in S$}.\\    
\end{cases}\]

Now, if $\rho_h(F) \leq -1$, then $\rho_{G,h}(S) \leq -1 + 6 = 5$, so $\rho_{G,h}(S \cup V(C) \cup \{w_2\} ) \leq 5 - 6 = -1$, contradicting Observation~\ref{obs:geq0}.
Otherwise, $(F,h)$ is exceptional.
Then, $\rho_h(F) \leq \rho_{\mathbf 3}(K_4) = 2$, so $\rho_{G,h}(S) \leq \rho_h(F) + 5 \leq 7$.
If $w_2 \not \in S$, then $\rho_{G,h}(S\cup V(C)\cup\{w_2\})\leq 7-8=-1$, contradicting Observation~\ref{obs:geq0}.

On the other hand, if $w_2\in S$, then by Observation~\ref{obs:geq0},  $0 \leq \rho_{G,h}(S\cup V(C))\leq \rho_{G,h}(S) - 6 \leq 1$.
Thus, by Lemma~\ref{lem:j(k-2)}, $V(G) = S\cup V(C)$, and furthermore,
$\rho_{G,h}(S) \geq 6$.
As $\rho_{G,h}(S) \in \{6,7\}$, it follows that $\rho_{G,h}(S) - 5= \rho_h(F) \in \{1,2\}$,
so by Observation~\ref{obs:ore-potential}, $F$ is a 4-Ore graph $\mathcal O$ on either $7$ or $4$ vertices, and $h \equiv 3$ is a constant function.

Now, write $F' = F_{1,2}$. 
As $|E_{F'}(w_1, w_2)| \geq 2$, $F'$ is not exceptional, so $\rho_h(F') \leq -1$. Therefore, $\rho_{G,h}(V(F')) \leq -1 + 6 = 5$. As $V(F) = V(G) \setminus V(C)$, it follows that $V(F') \subseteq V(F)$.
If $V(F') \subsetneq V(F)$, then by Lemma~\ref{i2}, 
$\rho_{G,h}(V(F')) \geq  \rho_{F,h}(V(F'))  \geq \rho_h(F) + 6 \geq 7$, a contradiction. Therefore, $V(F') = V(F)$, which implies that $\rho_h(G) = \rho_{G,h}(V(F')) + \rho_h(C) - 3(5) \leq 5 + 9 - 15 = -1$, contradicting Observation~\ref{obs:geq0}.

{\bf Case 2:} $|E(G[W])|\leq 1$. 
We may assume $w_1w_2, w_2w_3\notin E(G)$.
For $i\in\{1,3\}$, let $F_i=F_{2,i}$ and write $S_i=V(F_i)$.
Note that $w_1, w_2 \in S_1$, and $w_2, w_3 \in S_3$, and $S_1$ and $S_3$ may not be distinct.
 We note that by Lemma~\ref{notx0}, $w_2 \in S_0^*$.

{\bf Case 2.1:}
$\rho_h(F_1) \leq -1$ and $\rho_h(F_3) \leq -1$. 
In this case,
$\rho_{G,h}(S_1) \leq 4$ and $\rho_{G,h}(S_3) \leq 4$. Since $S_1$ and $S_3$ are both subsets  of $G - V(C)$, $(S_1 \cap S_3) \cap S_0^* \neq S_0^*$. 
Furthermore, as $w_2 \in S_1 \cap S_3 \cap S_0^*$, 
Lemma~\ref{lem:3-in-2} implies that $\rho_{G,h}(S_1 \cap S_3) \geq 3$. 
Then 
$$
\rho_{G,h}(S_1\cup S_3 \cup V(C))\leq \rho_{G,h}(S_1) + \rho_{G,h}(S_3) - \rho_{G,h}(S_1\cap S_3) +\rho_{G,h}(V(C)) - 3(5) \leq 4+4-3-6 = -1,
$$
a contradiction.

{\bf Case 2.2:} 
The pair $(F_1, h)$ is exceptional; that is, $F_1$
is a $4$-Ore graph on which $h\equiv3$ is a constant function.
First, we suppose $(F_3,h)$ is also
exceptional and that $S_1 = S_3$.
In this case, $h$ has a constant value of $3$ on $S_1 \cup V(C)$.
As $w_1, w_2, w_3 \in S_1$, it follows that $\rho_{G,h}(S_1\cup V(C))\leq \rho_{G,h}(S_1) +3(8) - 6(5) = \rho_{G,h}(S_1) - 6$. 
If $|S_1| = 10$, then Observation~\ref{obs:ore-potential} implies that $\rho_{G,h}(S_1 \cup V(C)) \leq \rho_{G,h}(S_1) - 6 \leq -1$, contradicting Observation~\ref{obs:geq0}.
If $|S_1| = 4$, then $F_1$ is isomorphic to $K_4$; however, this contradicts the assumption that $w_1 w_2, w_2w_3 \not \in E(G)$.
Therefore, $|S_1| = 7$.
Furthermore, by Observation~\ref{obs:ore-potential}, $\rho_{G,h}(S_1 \cup V(C)) = (1+5) + 3(8) - 6(5) = 0$. Therefore, by Lemma~\ref{lem:j(k-2)}, $V(G) = S_1 \cup V(C)$.

If $w_1 w_3 \not \in E(G)$, by letting  $(u_1, u_2, u_3)=(x',y',z')$ and applying Lemma~\ref{ore-47}, $G$ is either a $4$-Ore graph or has an $(H,L)$-coloring, a contradiction.
Therefore, $w_1 w_3 \in E(G)$. 
Let $F=F_{1,3}$. 
As $|E_F(w_1,w_3)| \geq 2$,
    $(F,h)$ is not exceptional, so $\rho_h(F) \leq -1$. Thus, $\rho_{G,h}(V(F)) \leq -1 + 6 = 5$.
As $V(F_1)=S_1 = V(G) \setminus V(C)$, it follows that $V(F) \subseteq V(F_1)$. 
If $w_2 \not \in V(F)$, then $V(F) \subsetneq V(F_1)$, so by Lemma~\ref{i2}, $\rho_{G,h}(V(F)) = \rho_{F_1,h}(V(F)) \geq \rho_h(F_1) + 6 = 7$, a contradiction. 
Therefore $w_2 \in V(F)$, so $\rho_h(G) = \rho_{G,h}(V(F) \cup V(C)) \leq 5 + 3(8) - 6(5) = -1$, contradicting Observation~\ref{obs:geq0}. Thus, it does not hold that $(F_3,h)$ is exceptional and that $S_1 = S_3$.

By the arguments above, 
either $\rho_{G,h}(S_3)\leq -1+5=4$, or $(F_3, h)$ is exceptional and $S_1\neq S_3$.

If $S_1 \cap S_3 \neq S_1$, then 
Lemma~\ref{i2} implies that
$\rho_{G,h}(S_1\cap S_3) \geq \rho_{G,h}(S_1) + 6$.
Then,
\begin{eqnarray*}
\rho_{G,h}(S_1\cup S_3 \cup V(C)) &\leq& \rho_{G,h}(S_1) + \rho_{G,h}(S_3) - \rho_{G,h}(S_1\cap S_3) + \rho_{G,h}(V(C)) - 3(5) \\
&\leq& \max\{\rho_{\mathbf 3}(K_4^-), 4\}  - 6-6 = 7 - 12 = -5,
\end{eqnarray*}
contradicting Observation~\ref{obs:geq0}. 
Otherwise, $S_1 \cap S_3 = S_1$. If $(F_3, h)$ is exceptional, then $S_1 \cap S_3 \neq S_3$, and we may apply the previous argument by symmetry. Otherwise, $\rho_{G,h}(S_1) \leq 4$, so
\begin{eqnarray*}
\rho_{G,h}(S_1\cup S_3 \cup V(C)) &\leq& \rho_{G,h}(S_1) + \rho_{G,h}(S_3) - \rho_{G,h}(S_1\cap S_3) + \rho_{G,h}(V(C)) - 3(5) \\
&= & 4+ 3(8) - 6(5) = -2,
\end{eqnarray*}
contradicting Observation~\ref{obs:geq0}.

\end{proof}

Now that we have shown that $C$ is neither a $2$-cycle nor a $3$-cycle, we aim to show that $C$ is also not a cycle of length at least $4$, implying that the low cycle $C$ in fact does not exist.

\begin{lemma}\label{lm1}
    If a subset $A\subseteq V(G)\setminus V(C)$ satisfies both $|N(A)\cap V(C)| \geq j \geq 2$ and $\rho_{G,h}(A)\leq 2j - 1$,  then there exists a subset $A_{\ell}\subseteq V(G)\setminus V(C)$ with $W\subseteq A_{\ell}$ and $\rho_{G,h}(A_{\ell})\leq 2 \ell - 1$ . 
\end{lemma}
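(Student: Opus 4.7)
The plan is to prove Lemma~\ref{lm1} by induction on $\ell - j$, where without loss of generality I take $j = |N(A) \cap V(C)|$ (replacing $j$ by this exact value only weakens the hypothesis $\rho_{G,h}(A) \leq 2j-1$). The base case $j = \ell$ is immediate: since each $u_i \in V(C)$ has a unique neighbor $w_i \in V(G) \setminus V(C)$, the assumption $|N(A) \cap V(C)| = \ell$ forces $W \subseteq A$, so I take $A_\ell = A$ and use $\rho_{G,h}(A) \leq 2\ell - 1$ directly.

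For the inductive step with $j < \ell$, I pick $w_i \in W \setminus A$ and $w_p \in W \cap A$ (both exist since $j < \ell$ and $j \geq 2$) and consider the $h$-minimal multigraph $F_{i,p} = G(C, w_i, w_p)$ provided by Lemma~\ref{comp}, along with its vertex set $S := S_{i,p}$, which contains $w_i, w_p$ and is disjoint from $V(C)$. I claim that $A' := A \cup S$ satisfies $\rho_{G,h}(A') \leq 2(j+1) - 1$; since $|N(A') \cap V(C)| \geq j + 1$ (absorbing $w_i$), the induction hypothesis applied to $A'$ then delivers the desired $A_\ell$. Submodularity (Lemma~\ref{lem:submodularity}) yields
\[
\rho_{G,h}(A') \leq \rho_{G,h}(A) + \rho_{G,h}(S) - \rho_{G,h}(A \cap S).
\]
Since $w_p \in A \cap S$ lies in $W \subseteq S_0^*$ by Lemma~\ref{notx0}, while $A \cap S$ is disjoint from $V(C) \subseteq S_0^*$, the set $(A \cap S) \cap S_0^*$ is a nonempty proper subset of $S_0^*$, and Lemma~\ref{lem:3-in-2} gives $\rho_{G,h}(A \cap S) \geq 3$.

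By the minimality of $G$, either $\rho_h(F_{i,p}) \leq -1$ or $(F_{i,p}, h)$ is exceptional. In the non-exceptional case, comparing $F_{i,p}$ with $G[S]$ (their edge sets differ only by the added $w_iw_p$ edge of $F_{i,p}$ and by edges of $G[S]$ dropped from $F_{i,p}$, which only decrease $\rho_{G,h}(S)$ relative to $\rho_h(F_{i,p})$) gives $\rho_{G,h}(S) \leq \rho_h(F_{i,p}) + 6 \leq 5$, so the submodularity bound yields $\rho_{G,h}(A') \leq (2j-1) + 5 - 3 = 2(j+1) - 1$, as required.

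The main obstacle is the exceptional case. Here $F_{i,p}$ is a $4$-Ore graph on $3s + 1 \leq 10$ vertices with $h \equiv 3$ on $S$. Since $w_i \in S \setminus A$, the set $A \cap S$ is a nonempty proper subset of $S$, so the sharper subset bound in Lemma~\ref{i2} applies: $\rho_{F_{i,p}, \mathbf{3}}(A \cap S) \geq \rho_{\mathbf{3}}(F_{i,p}) + 6 = 9 - s$. The key observation is that each edge of $G[S]$ not in $F_{i,p}$ subtracts exactly $5$ from $\rho_{G,h}(S)$ and at most $5$ from $\rho_{G,h}(A \cap S)$, so these ``extra'' contributions cancel in the difference: $\rho_{G,h}(S) - \rho_{G,h}(A \cap S) \leq (8 - s) - (9 - s) = -1$. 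Substituting into the submodularity bound gives $\rho_{G,h}(A') \leq 2j - 2 \leq 2(j+1) - 1$, closing the induction. The most delicate point is tracking contributions on the pair $w_i w_p$ carefully when it already appears as a (possibly parallel) edge of $G$, and verifying that the bound of Lemma~\ref{i2} still transfers from $F_{i,p}$-potential to $G$-potential after correcting for these ``extra'' edges of $G[S]$.
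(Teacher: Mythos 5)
Your proof is correct and takes essentially the same approach as the paper: both iteratively grow the set by absorbing the vertex sets $S_{p,q}$ of the $h$-minimal multigraphs $F_{p,q}$, using submodularity together with Lemma~\ref{lem:3-in-2} when $\rho_h(F_{p,q}) \leq -1$, and Lemma~\ref{i2} when $(F_{p,q},h)$ is exceptional. Your handling of the exceptional case is a bit more careful than the paper's about converting the $F$-potential bound of Lemma~\ref{i2} into the $G$-potential inequality $\rho_{G,h}(S) - \rho_{G,h}(A\cap S) \leq -1$, a detail the paper passes over lightly.
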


\begin{proof}
We obtain $A_{\ell}$ from $A$ by constructing sets $A_i  \subseteq V(G) \setminus V(C)$, for $i \in \{j,\dots,\ell\}$, for which $|N(A_i) \cap V(C)| \geq i$ and $\rho_{G,h}(A_i) \leq 2i-1$.

We begin by letting $A_j = A$.  Now, suppose we have a set $A_i \subseteq V(G) \setminus V(C)$, for which $|N(A) \cap V(C)| \geq i$ and $\rho_{G,h}(A_i) \leq 2i-1$, for some $i \in \{j,\dots,\ell-1\}$. 
We describe how to construct $A_{i+1}$.

If $|N(A_i) \cap V(C)| \geq i+1$, then we let $A_{i+1} = A_i$. Otherwise, suppose $|N(A_i) \cap V(C)| = i$.
We take a vertex, say $w_p$, in $A_i \cap W$ and a vertex,
say $w_q$, in $W \setminus A_i$. 
Let $F=F_{p,q}$ and $S=V(F)$. 
Note that $|N( A_i\cup S)\cap V(C)|\geq i+1$.

We consider two cases. First, suppose that $\rho_h(F) \leq -1$. Then $\rho_{G,h}(S) \leq -1 + 6 = 5$.
Since $S$ contains the vertex $w_p$, which belongs to $S_0^*$ by Lemma~\ref{notx0}, 
it follows that $A_i \cap S \cap S_0^* \not\in\{\emptyset, S_0^*\}$.
Therefore, 
by Lemma~\ref{lem:3-in-2}, $\rho_{G,h}( A_i \cap S)\geq 3$.
Thus $$
\rho_{G,h}(A_i \cup S) \leq \rho_{G,h}(A_i) + \rho_{G,h}(S) - \rho_{G,h}(A_i \cap S) \leq 2i-1 + 5 - 3 = 2(i+1) - 1.
$$
Then, we let $A_{i+1} = A_i \cup S$.

On the other hand, suppose that $F$ is exceptional and that $h(v) = 3$ for each $v \in S$.
By Lemma~\ref{i2}, as $A_i \cap S \neq S$,
\[\rho_{G,h}(A_i \cap S)\geq \rho_{F,h}(S) + 6 = \rho_{G,h}(S) + 1.\]
Therefore, $\rho_{G,h}(A_i \cup S) \leq \rho_{G,h}(A_i) + \rho_{G,h}(S) - \rho_{G,h}(A_i \cap S) \leq \rho_{G,h}(A_i) -1 < 2(i+1)-1$.
Then, we let $A_{i+1} = A \cup S$.

At the end of this procedure, we have a set $A_{\ell} \subseteq V(G) \setminus V(C)$ for which $W\subseteq A_{\ell}$ and $\rho_{G,h}(A_{\ell})\leq 2 \ell - 1$.
\end{proof}

\begin{cor}
\label{cor:W-high-potential}
 If $A \subseteq V(G) \setminus V(C)$ satisifes $|N(A) \cap V(C)| \geq 2$, then $\rho_{G,h}(A) \geq 2|N(A) \cap V(C)|$.
\end{cor}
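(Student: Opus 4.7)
The plan is to prove this corollary by contradiction, leveraging Lemma~\ref{lm1} as the main engine. Suppose, for contradiction, there exists $A \subseteq V(G) \setminus V(C)$ with $j := |N(A) \cap V(C)| \geq 2$ but $\rho_{G,h}(A) \leq 2j - 1$. Then $A$ satisfies the hypothesis of Lemma~\ref{lm1}, so there exists a set $A_{\ell} \subseteq V(G) \setminus V(C)$ with $W \subseteq A_{\ell}$ and $\rho_{G,h}(A_{\ell}) \leq 2\ell - 1$. The idea is that this set $A_{\ell}$, together with $V(C)$, is so cheap that it violates Observation~\ref{obs:geq0}.

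To carry this out, I form the set $U = A_{\ell} \cup V(C)$ and compute its potential directly. By Lemmas~\ref{lem:no-low-2cycle} and~\ref{lem:no-low-triangle} we know $\ell \geq 4$, so $C$ is a genuine induced $\ell$-cycle with $\ell$ single edges among its vertices. By Lemma~\ref{lem:low-3}, every $u_i \in V(C)$ is low with $h(u_i) = d(u_i) = 3$, so $u_i$ has exactly one neighbor outside $V(C)$, namely $w_i$; and since $W \subseteq A_{\ell}$, each such edge $u_i w_i$ is a single edge contained in $E_G(V(C), A_{\ell})$. Accounting for vertex potentials in $V(C)$ (each contributing $8$), the $\ell$ cycle edges (each contributing $-5$), and the $\ell$ crossing edges $u_i w_i$ (each contributing $-5$), I get
\[
\rho_{G,h}(U) = \rho_{G,h}(A_{\ell}) + 8\ell - 5\ell - 5\ell = \rho_{G,h}(A_{\ell}) - 2\ell \leq (2\ell - 1) - 2\ell = -1,
\]
contradicting Observation~\ref{obs:geq0}, which asserts that every nonempty vertex subset has nonnegative potential.

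There is no real obstacle in this proof once Lemma~\ref{lm1} is in hand; it is essentially a bookkeeping argument. The one thing to be careful about is to justify that every crossing edge between $V(C)$ and $A_{\ell}$ is indeed a single edge with $\rho = -5$ and that there are exactly $\ell$ such edges (no more, no fewer), which follows because each low vertex $u_i \in V(C)$ has degree exactly $3$ with two neighbors on $C$ and a unique third neighbor $w_i \in W \subseteq A_{\ell}$. All other ingredients -- the fact that $\ell \geq 4$, that $h \equiv 3$ on $V(C)$, and the nonnegativity of potentials -- have already been established in the preceding subsections.
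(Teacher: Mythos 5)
Your proof is correct and follows the same route as the paper: apply Lemma~\ref{lm1} to obtain a set $A_\ell \supseteq W$ with $\rho_{G,h}(A_\ell)\leq 2\ell-1$, then compute $\rho_{G,h}(A_\ell\cup V(C))\leq -1$ to contradict Observation~\ref{obs:geq0}. The extra care you take to justify that there are exactly $\ell$ single crossing edges (via $h(u_i)=d(u_i)=3$) is a nice elaboration, but the argument is otherwise identical to the paper's.
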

\begin{proof}
    Write $j = |N(A) \cap V(C)|$, and suppose that $\rho_{G,h}(A) \leq 2j-1$. Then, 
    Lemma~\ref{lm1} implies that there exists a subset $A_0 \subseteq V(G) \setminus V(C)$ satisfying $W \subseteq A_0$ and $\rho_{G,h}(A_0) \leq 2 \ell-1$. Then, 
    \[\rho_{G,h}(A_0 \cup V(C)) \leq \rho_{G,h}(A_0) + \rho_{G,h}(V(C)) - 5 \ell \leq (2\ell-1) + (8\ell-5\ell) - 5\ell = -1,\]
    contradicting Observation~\ref{obs:geq0}.
\end{proof}

\begin{lemma}\label{lem:no-low-cycle}
$B^*$ has no cycle consisting of low vertices.
\end{lemma}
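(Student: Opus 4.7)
Plan: By Lemmas \ref{lem:no-low-2cycle} and \ref{lem:no-low-triangle}, the chosen induced low cycle $C$ has length $\ell \geq 4$, and I aim to derive a contradiction. The overall strategy is to exhibit a subset $A \subseteq V(G) \setminus V(C)$ satisfying the hypothesis of Lemma \ref{lm1}, namely $|N(A) \cap V(C)| \geq j \geq 2$ and $\rho_{G,h}(A) \leq 2j - 1$; Lemma \ref{lm1} will then produce $A_\ell \supseteq W$ with $\rho_{G,h}(A_\ell) \leq 2\ell - 1$, whence
\[\rho_{G,h}(A_\ell \cup V(C)) \leq (2\ell - 1) + 3\ell - 5\ell = -1,\]
contradicting Observation \ref{obs:geq0} exactly as in the proof of Corollary \ref{cor:W-high-potential}.

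To build $A$, I exploit the $h$-minimal multigraphs $F_{p,q} \subseteq F(C, w_p, w_q)$ from Lemma \ref{comp} for pairs with $w_p \neq w_q$ and $u_p u_q \notin E(G)$; such pairs exist since $\ell \geq 4$ (the degenerate case in which all $w_i$'s collapse to a single high-degree vertex is handled separately by direct potential bookkeeping on $V(C) \cup W$). For each such $F_{p,q}$, either $\rho_h(F_{p,q}) \leq -1$, giving $\rho_{G,h}(V(F_{p,q})) \leq \rho_h(F_{p,q}) + 6 \leq 5$, or $(F_{p,q}, h)$ is exceptional. In the non-exceptional case, if $V(F_{p,q})$ contains some $w_k \notin \{w_p, w_q\}$, then $A := V(F_{p,q})$ satisfies Lemma \ref{lm1}'s hypothesis with $j = 3$. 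Otherwise, I combine two sets $V(F_{p_1,q_1})$ and $V(F_{p_2,q_2})$ for non-adjacent pairs whose endpoints together cover all of $V(C)$'s outside neighbors---for instance $\{1,3\}$ and $\{2,4\}$ when $\ell = 4$---and use Lemma \ref{lem:submodularity} together with Lemmas \ref{lem:j(k-2)} and \ref{lem:3-in-2} to lower-bound the potential of their intersection, verifying the hypothesis of Lemma \ref{lm1}.

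The main obstacle is the exceptional case, where some $F_{p,q}$ is a 4-Ore graph on $4$, $7$, or $10$ vertices with $h \equiv 3$ on $V(F_{p,q})$. Following the template of Lemma \ref{lem:no-low-triangle}, I invoke Lemma \ref{i2}---every proper nonempty subset of $V(F_{p,q})$ has potential at least $\rho_h(F_{p,q}) + 6$---to severely restrict how $V(F_{p,q})$ can overlap with other constructed sets. Depending on which small 4-Ore graph appears, I either force $V(G) = V(F_{p,q}) \cup V(C)$ and derive a direct contradiction with Observation \ref{obs:geq0} by computing $\rho_h(G)$ (invoking Lemma \ref{ore-47} to preclude $G$ itself from being a 4-Ore graph when the Moser spindle arises), or I locate a second non-adjacent pair $\{p', q'\}$ for which $F_{p', q'}$ is non-exceptional and combine the resulting sets as in the previous paragraph. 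The bulk of the bookkeeping concerns edges among the $w_i$'s and coincidences $w_i = w_j$, handled analogously to the subcases of Lemma \ref{lem:no-low-triangle}.
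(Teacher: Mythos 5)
Your high-level strategy matches the paper's: both aim to produce a set $A_\ell \supseteq W$ with $\rho_{G,h}(A_\ell) \leq 2\ell - 1$ via Lemma~\ref{lm1} (equivalently, to violate Corollary~\ref{cor:W-high-potential}), building $A_\ell$ from the sets $S_{p,q} = V(F_{p,q})$ and handling exceptional $(F_{p,q},h)$ through Lemma~\ref{i2}. However, there are two concrete gaps.

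First, the ``degenerate case in which all $w_i$'s collapse to a single high-degree vertex'' cannot be closed by potential bookkeeping on $V(C) \cup W$ alone. When $|W| = 1$, Corollary~\ref{cor:W-high-potential} forces $\ell = 4$ and $h(w) = 3$, so $\rho_{G,h}(V(C) \cup \{w\}) = 5(8) - 8(5) = 0$. This does \emph{not} contradict Observation~\ref{obs:geq0}; it only forces $V(G) = V(C) \cup \{w\}$ via Lemma~\ref{lem:j(k-2)}. To finish, the paper needs a genuine coloring argument: by Lemma~\ref{lem:cycle-cover}, $H[L(V(C))]$ is a disjoint $C_4 \cup C_8$, and choosing $f(w)$ adjacent to the $C_8$ component breaks it, allowing an extension. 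Your plan omits this step, and potential arithmetic alone cannot supply it.

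Second, your proposal to ``combine two sets $V(F_{p_1,q_1})$ and $V(F_{p_2,q_2})$ for non-adjacent pairs whose endpoints together cover all of $V(C)$'s outside neighbors --- for instance $\{1,3\}$ and $\{2,4\}$'' and then lower-bound the intersection's potential does not work as stated. To invoke Lemma~\ref{lem:3-in-2} (or even Lemma~\ref{lem:j(k-2)}) on $S_{1,3} \cap S_{2,4}$, you need that intersection to be nonempty and to meet $S_0^*$ properly; but $S_{1,3}$ and $S_{2,4}$ share no $w_i$ and may well be disjoint, in which case the submodularity bound degrades to $\rho(S_1 \cup S_2) \leq \rho(S_1) + \rho(S_2)$, which is not small enough. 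The paper sidesteps this by \emph{always} anchoring the new set at a $w_p$ already in the current $A_i$ (as in Lemma~\ref{lm1}) or, in Case~1 of the proof, by fixing a common $w_i$ and arguing about $S_{i,j}$ and $S_{i,r}$ that both contain $w_i$, which guarantees a nonempty intersection inside $S_0^*$. Without such an anchor your submodularity step has no teeth.

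You also elide the subcase $|W|\geq 3$ with some $w_i$ having two or more neighbors on $C$, and the $|W|=2$ case; the paper disposes of these separately (the latter via its Claim~\ref{claim:all-W}). These could plausibly be absorbed into your framework, but they are not ``analogously to Lemma~\ref{lem:no-low-triangle}'' as you assert, since they rely on $\ell \geq 4$ and the bound $|N(S_{i,j}) \cap V(C)| \leq 3$, which has no analogue in the triangle case.
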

\begin{proof} 
Suppose for the sake of contradiction that a shortest induced cycle $C$  exists in $B^*$.
By Lemmas~\ref{lem:no-low-2cycle} and~\ref{lem:no-low-triangle}, $\ell\geq 4$.

We consider several cases based on the number of vertices in $W$.
If $|W| = 1$, then by Corollary~\ref{cor:W-high-potential}, $8 \geq \rho_{G,h}(W) \geq 2 \ell$, implying that $\ell = 4$, and hence that $\rho_h(W) = 8$. We write $W = \{w\}$ and observe that $h(w) = 3$ by Lemma~\ref{lem:no2}. Also,
$\rho_{G,h}(V(C) \cup \{w\}) = 5(8) - 8(5) = 0 $, and hence Lemma~\ref{lem:j(k-2)} implies that $V(G) = V(C)\cup \{w\}$.
Then, consider a $3$-cover $H$ of $G$. By Lemma~\ref{lem:cycle-cover},
$H[L(V(C))]$ consists of a $C_4$ component and a $C_8$ component. 
Let $c \in L(w)$ be a color that is adjacent to the $C_8$ component in $H[L(V(C))]$, and assign $f(w) = c$.
Then, as the colors of 
$H[L(V(C))]$ not adjacent to $f$
do not induce a $C_8$ subgraph of $H$, $f$
can be extended to an $(H,L)$-coloring of $G$.
Therefore, $G$ is not $h$-minimal, a contradiction.

Hence, we assume that $|W| \geq 2$. 

\begin{claim}
\label{claim:all-W}
 If $w_i\neq w_j$, then
    $|N(S_{i,j}) \cap V(C)| \leq 3$. In particular,    
    $|S_{i,j} \cap W| < |W|$.
\end{claim}
\begin{proof}
    Suppose for the sake of contradiction that 
    $|N(S_{i,j}) \cap V(C)| \geq 4$.
    Then, by Corollary~\ref{cor:W-high-potential}, $\rho_{G,h}(S_{i,j}) \geq 8$.
    However, as $F_{i,j}$ is $h$-minimal,
    either $\rho_{ F_{i,j}}(S_{i,j}) \leq -1$, or ($F_{i,j},h)$ is exceptional.
    If $\rho_{h}(F_{i,j}) \leq -1$,
    then $\rho_{G,h}(S_{i,j}) \leq -1+6=5$. If $(F_{i,j},h)$ is exceptional, then $\rho_{h}(F_{i,j}) \leq 2$ and $w_iw_j \not \in E(G)$; therefore, $\rho_{G,h}(S_{i,j}) \leq 7$.
    In both cases, $\rho_{G,h}(S_{i,j}) \leq 7$, giving us a contradiction.
\end{proof}

Now, we are ready to prove that $B^*$ contains no low cycle of length at least $4$.

If $|W| = 2$,  then,  writing $W = \{w_1,w_2\}$, 
 $S_{1,2}$ contains all of $W$, contradicting Claim~\ref{claim:all-W}.

Therefore,
$|W| \geq 3$. First, suppose that some vertex in $W$, say $w_1$, has at least two neighbors in $C$.
We fix $w_2 \in W \setminus \{w_1\}$.
By Claim~\ref{claim:all-W}, there exists a vertex  $w_3 \in W \setminus S_{1,2}$. 
Furthermore, by Claim~\ref{claim:all-W}, $w_2\notin S_{1,3}$. 
We write $S_2 = S_{1,2}$ and $S_3 =S_{1,3}$.

By Corollary~\ref{cor:W-high-potential}, 
$\rho_{G,h}(S_2)\geq 2|N(S_2)\cap V(C)|\geq 2(3)=6$.
Therefore, $\rho(F_{1,2}) \geq \rho_{G,h}(S_2) - 6 \geq 0$,
so $(F_{1,2} ,h)$ is exceptional.
In particular, $w_1w_2 \not \in E(G)$.
Therefore, $\rho(F_{1,2})  \geq \rho_{G,h}(S_2)-5=1$, so
Observation~\ref{obs:ore-potential} implies that $|S_2| \in \{4,7\}$.
By symmetry, $|S_3| \in \{4,7\}$.
Thus each of $F_{1,2}$ and $F_{1,3}$ is isomorphic to either a $K_4$ or the Moser spindle.

Since $w_1\in S_2 \cap S_3$, and $w_2\in S_2\setminus S_3$, by Lemma~\ref{i2}, \[\rho_{G,h}(S_2\cap S_3)\geq \rho(F_{1,2}) + 6 = \rho_{G,h}(S_2)  + 1.\]
Therefore,
\begin{eqnarray*}
\rho_{G,h}(S_2\cup S_3) &\leq& \rho_{G,h}(S_2) + \rho_{G,h}(S_3) - \rho_{G,h}(S_2\cap S_3) 
\leq \rho_{G,h}(S_3) - 1\leq \rho(K_4) +5 - 1 = 6.
\end{eqnarray*}
This contradicts Corollary~\ref{cor:W-high-potential}, as $|N(S_2\cup S_3)\cap V(C)|\geq 4$.

Therefore, each vertex of 
$W$ has exactly one neighbor in $C$, and hence $|W|=\ell$. 
To finish the proof, we now consider several cases.

{\bf Case 1.} $W$ contains distinct vertices $w_i$ and $w_j$,
for which $w_iw_j \not \in E(G)$ and $\rho_h(F_{i,j}) \leq -1$.
We write $S = S_{i,j}$.
Then, $\rho_{G,h}(S) \leq -1 + 5 = 4$.
By setting $ A = S$ in Corollary~\ref{cor:W-high-potential}, $|S \cap W| = |N(S) \cap V(C)| \leq \frac 12 \rho_{G,h}(S)  \leq 2$. 
As $S$ contains $w_i$ and $w_j$, it follows that $|S \cap W| = |N(S) \cap V(C)| =  2$.
Furthermore, as $V(C) \subseteq S_0^*$ and $S_0^*$ is an edge-block, it follows without loss of generality that $w_i \in S_0^*$.

Suppose some $w_r \in W \setminus S$ 
satisfies $w_iw_r\notin E(G)$.
We write $S' = S_{i,r}$.
If $(F_{i,r},h)$ is exceptional,
then by Lemma~\ref{i2}, 
\[\rho_{G,h}(S\cap S') = \rho_{ F_{i,r} ,h}(S \cap S') \geq \rho_h(F_{i,r}) + 6 = \rho_{G,h}(S') + 1.\]
Therefore, $\rho_{G,h}(S \cup S') \leq \rho_{G,h}(S) + \rho_{G,h}(S') - \rho_{G,h}(S \cap S') \leq \rho_{G,h}(S)- 1 \leq 3$.
However, as $w_i,w_j,w_r \in S \cup S'$,
Corollary~\ref{cor:W-high-potential} implies that $\rho_{G,h}(S \cup S') \geq 6$, a contradiction.

On the other hand, suppose that
$\rho(F_{i,r}) \leq -1$ and $w_iw_r \not \in E(G)$, so that
$\rho_{G,h}(S') \leq -1+5=4$.
As $w_i \in S \cap S'$ and $w_i \in S_0^*$ by Lemma~\ref{notx0},
$S \cap S' \cap S_0^* \not \in \{\emptyset, S_0^*\}$, so $\rho_{G,h}(S \cap S') \geq 3$ by Lemma~\ref{lem:3-in-2}. 
Then,
$$
\rho_{G,h}(S\cup S')\leq \rho_{G,h}(S)+\rho_{G,h}(S') - \rho_{G,h}(S\cap S')\leq 4 + 4-3 = 5.
$$
Again, as $w_i, w_j,w_r \in S \cup S'$, we have $|N(S \cup S') \cap V(C)| \geq 3$, so Corollary~\ref{cor:W-high-potential} implies that $\rho_{G,h}(S \cup S') \geq 6$,
giving us a contradiction again.

By our contradiction, we may assume that for each $w_r \in W \setminus S$, $w_iw_r \in E(G)$. 
By a symmetric argument, for each $w_r \in W \setminus S$, $w_jw_r \in E(G)$.
Then, for each $w_r \in W \setminus S$,
$\rho_{G,h}(S + w_r) \leq \rho_{G,h}(S)+8 - 2(5) \leq 4+8 - 10 = 2$, again contradicting Corollary~\ref{cor:W-high-potential}.

\medskip
{\bf Case 2.}  
 Case 1 does not hold.
We write $G' = G -V(C)$.
Choose a set $S \subseteq V(G')$ of smallest potential over all subsets of $V(G')$ containing at least two vertices of $W$.

First, suppose that $|S \cap W| = \ell$, so that $S \cap W = W$. As $G$ is $K_{\ell}$-free, there exist two nonadjacent vertices $w_i, w_j$,  in $W$.
Since Case 1 does not hold, $(F_{i,j},h)$ is exceptional, so
$\rho_h(F_{i,j}) \leq \rho_{\mathbf 3}(K_4) = 2$, thus $\rho_{G,h}(S_{i,j}) \leq 2+5=7$.
As $S$ is chosen to have minimum potential, it follows that $\rho_{G,h}(S) \leq 7$. However, Corollary~\ref{cor:W-high-potential} implies that $\rho_{G,h}(S) \geq 2|N(S) \cap V(C)| = 2\ell \geq 8$, a contradiction.

On the other hand, suppose that $|S \cap W| < \ell$. We fix a vertex $w_i\in W \setminus S$. 
If $w_i$ is adjacent to each vertex in $W \cap S$, then $\rho_{G,h}(S \cup \{w_i\})\leq \rho_{G,h}(S) + 8 - 2(5) < \rho_{G,h}(S)$, contradicting the minimality of $\rho_{G,h}(S)$.

Otherwise, 
 if $W \cap S$ contains a vertex $w_j$ for which $w_iw_j \not \in E(G)$, then we let $S_0 = S_{i,j}$. 
    As we are not in Case 1, $(F_{i,j},h)$ is exceptional. 
    Therefore, Lemma~\ref{i2} implies that $\rho_{G,h}(S_0 \cap S) = \rho_{ F_{i,j},h}(S_0 \cap S) \geq \rho_{ F_{i,j},h}(S_0) + 6 = \rho_{G,h}(S_0) + 1$. Then,
    \[\rho_{G,h}(S \cup S_0 ) \leq \rho_{G,h}(S) + \rho_{G,h}(S_0) - \rho_{G,h}(S_0 \cap S) < \rho_{G,h}(S), \]
    again contradicting the minimality of $\rho_{G,h}(S)$.
\end{proof}

\section{Discharging}\label{sec:discharging}

In this section, we use discharging on the set $S_0^*$ to reach a contradiction with the existence
of the counterexample $G$ to Theorem~\ref{thm:stronger}.  
The rules of discharging are as follows.

\begin{enumerate}[(R1)]
    \item  For each $v \in V(G)$, we give initial charge $\rho_{h}(v)$ to $v$. 
    For each pair $uv$, where $u,v\in V(G)$ are joined by $s \geq 1$ edges, we give initial charge 
    $1-6s$ to the pair $uv$. For each nonadjacent pair $u,v \in V(G)$, the initial charge of $uv$ is $0$.
    \item For each pair $uv$ of adjacent vertices in $G$, if $ s\geq 1$ edges connect $u$ with $v$, 
    the pair $uv$ receives charge
    $(6s-1)/2$ from each of $u$ and $v$. 
    \item Each vertex $u \in S^*_0 \setminus V(\mathcal B_0)$ takes charge $\frac 12$ along each edge that joins $u$ to a vertex $v$ of $\mathcal B_0$.
\end{enumerate}

For each $v \in  S_0^*$, we write $ch^*(v)$ for the final charge of $v$. We also write $ch^*(S_0^*) = \sum_{v \in S_0^*} ch^*(v)$.

After (R1), the charge in $S_0^*$ is $\rho_{G,h}(S_0^*) = \rho_h(B^*)$.
If $S_0^* = V(G)$, then during (R2), $S_0^*$ does not gain or lose any charge. 
If $S_0^* \neq V(G)$, then $S_0^*$ gives charge $\frac 52$ to the edge $x_0^* y_0^*$.
Finally, during (R3), $S_0^*$ does not gain or lose any charge.
Therefore, 
\begin{equation}
\label{eqn:ch-LB}
ch^*(S_0^*) = 
\begin{cases}
\rho_h(G) &\geq 0 \quad \textrm{ if }  S_0^* = V(G), \\
\rho_{G,h}(S_0^*) - \frac 52 &\geq - \frac 12  \quad\textrm{ if } S_0^* \neq V(G).
\end{cases}
\end{equation}

We first show that $\mathcal B_0$ is nonempty. 
Indeed, suppose $\mathcal B_0$ is empty, so $d(v)>h(v)$ for each $v \in S_0^*$. 
Fix $v \in S_0^*$.
If $d(v) \leq 3$, then Lemma~\ref{lem:no1} implies that $h(v) = 2$, so $d(v) = 3$ by Lemma~\ref{lem:no2}. 
Thus, $ch^*(v) = 4 - 3 \left ( \frac 52 \right ) = - \frac 72$.
 If $d(v) \geq 4$,  then $h(v)= 3$, so
  $ch^*(v) \leq 8 - \frac 52 d(v) \leq -2$.
  Therefore, $ch^*(S_0^*) \leq -2 |S_0^*| < -1$, contradicting (\ref{eqn:ch-LB}).
   Hence, $\mathcal B_0$ is nonempty.

Now, consider a vertex $v \in S_0^*$.
If $ d(v) \geq  1+h(v)$, then
we consider several cases.
\begin{enumerate}[(N1)]
    \item \label{item:small-nonlow} 
    If $d(v) \leq 3$, then $h(v) = 2$ by Lemma~\ref{lem:no1}, so $d(v) = 3$ by Lemma~\ref{lem:no2}. Thus,
    \[ch^*(v) \leq 4 - 3 \left (\frac 52 \right )  + \frac 12 (3) = -2.\]
    \item If $d(v) \geq 4$, then
    \[ch^*(v) \leq 8 - \frac 52 d(v)  + \frac 12 d(v)   \leq 0.\]
\end{enumerate}
Furthermore, it is easy to check that 
the inequality in (N2) is strict 
if $d(v) \geq 5$,
$v$ has at most $3$ low neighbors, 
or $v$ is in a parallel pair.

For each low vertex $v \in S_0^*$, Lemma~\ref{lem:low-3} implies that $h(v) = d_G(v) = 3$, so
\[ch^*(v) \leq 8 - 3 \left ( \frac 52 \right ) - \frac 12 (d_{B^*}(v) - d_{\mathcal B_0}(v)) =
\begin{cases}
-1 + \frac 12 d_{ \mathcal B_0 }(v) & \textrm{ if } v \neq x_0^*, \\
- \frac 12 + \frac 12 d_{ \mathcal B_0}(v) & \textrm{ if } v = x_0^*.
\end{cases}
 \]

Thus, for each component $T$ of $\mathcal B_0$, we have 
\[\sum_{v \in V(T)} ch^*(v) \leq 
\begin{cases}
- |V(T)| + |E(T)| & \textrm{ if } x_0^* \not \in V(T), \\
\frac 12 - |V(T)| + |E(T)| & \textrm{ if } x_0^* \in V(T).
\end{cases}
 \]
By Lemma~\ref{lem:no-low-cycle}, $T$ is a tree. Therefore, $|V(T)| = |E(T)| + 1$, so 
\begin{equation}
\label{eqn:ch-low}
\sum_{v \in V( T )} ch^*(v) \leq 
\begin{cases}
-1 \textrm{ if } x_0^* \not \in V(T), \\
-\frac 12 \textrm{ if } x_0^* \in V(T).
\end{cases} 
\end{equation}
Therefore, by (N1), (N2),  (\ref{eqn:ch-low}), and the fact that $\mathcal B_0$ is nonempty,
\[ch^*(S_0^*) = \sum_{v \in V(\mathcal B_0)} ch^*(v) + \sum_{v \in S_0^* \setminus V(\mathcal B_0)} ch^*(v) \leq 
\begin{cases}
-1 \textrm{ if } S_0^* = V(G) \\
-\frac 12 \textrm{ if } S_0^* \neq  V(G),
\end{cases}\]
with equality holding only if each $v \in S_0^* \setminus V(\mathcal B_0)$ satisfies $ch^*(v) = 0$ and $\mathcal B_0$ has exactly one component. 
By (\ref{eqn:ch-LB}), 
we have a contradiction if $S_0^* = V(G)$. Therefore, $S_0^* \neq V(G)$, and $ch^*(S_0^*) = - \frac 12$.
Hence, we may assume that $\mathcal B_0$ has a single component $T$ and that each $v \in S_0^* \setminus V(T)$ satisfies $d(v) = 4$ and has four distinct neighbors in $T$. As $S_0^*$ is an edge-block, it follows that $V(T) \subsetneq S_0^*$, so $|V(T)| \geq 4$.

Now, let $t$ be a leaf of $T$ with at least two neighbors $x,y \in S^*_0 \setminus V(T)$. 
We write $X = G[V(T) \cup \{x,y\}]$, and we claim that $X$ is DP $(3 - d_G + d_X)$-colorable, contradicting Observation~\ref{obs:reducible}.
To this end, for each vertex $v \in V(X)$, write $h'(v) = 3 - d_G(v) + d_X(v)$. We observe that for each $v \in V(T)$, $h'(v) = d_X(v)$, and for each $v \in V(X) \setminus V(T)$, $h'(v) = 3$. 
Let $(H',L')$ be an $h'$-cover of $X$ where $L'=L\vert_{V(X)}$.
We construct an $(H',L')$-coloring $f$ of $X$ as follows.
First,
we select a color $c \in L(t)$, and for each vertex $w \in \{x,y\}$, we assign $f(w)$ to be the neighbor of $c$ in $H'$.
Note that this is possible, as $x$ and $y$ are not adjacent.
Then, we observe that $t$ has $d_T(t) + 1$ available colors in $L'(t)$, and each other vertex $v \in V(T)$ has at least $d_T(v)$ available colors in $L'(v)$. Therefore, we can extend $f$ to an $(H',L')$-coloring of $X$. Hence, $X$ is DP $(3 - d_G + d_X)$-colorable, contradicting Observation~\ref{obs:reducible}. This final contradiction completes the proof.

\section{Bound for list coloring}\label{sec:list}

We say that $G$ is \emph{$(H,L)$-minimal} if $G$ has no $(H,L)$-coloring but every proper subgraph $G'$ of $G$ has an $(H, L\vert_{V(G')})$-coloring.

\begin{thm}
\label{thm:HLcrit}
    Let $G$ be a multigraph
    with a function $h:V(G) \to \{0, 1,2, 3\}$. Suppose that $G$ has no exceptional subgraph. 
    If $(H,L)$ is an $h$-cover of $G$ and $G$ is $(H,L)$-minimal, then $\rho_h(G) \leq -1$.
\end{thm}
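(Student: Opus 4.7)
The plan is to mirror the minimum-counterexample argument used for Theorem~\ref{thm:stronger}. Suppose for contradiction that Theorem~\ref{thm:HLcrit} fails, and let $G$ be a counterexample with $|V(G)|$ minimum, equipped with an $h$-cover $(H,L)$ witnessing the $(H,L)$-minimality. Then $G$ is $(H,L)$-minimal, contains no exceptional subgraph, and yet $\rho_h(G) \geq 0$. My goal is to rerun the proof of Sections~\ref{sec:propG} and~\ref{sec:discharging} in this setting and derive a contradiction.

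The first step is to observe that every structural lemma in Section~\ref{sec:propG} (Observations~\ref{obs:zero}--\ref{obs:geq0} and Lemmas~\ref{lem:terminal}--\ref{lem:no-low-cycle}) transfers. Although phrased in terms of $h$-minimality, each such proof invokes this hypothesis only by asserting that some proper subgraph of $G$ admits an $(H, L\vert)$-coloring for the specific cover fixed at the start of the section; this is exactly what $(H,L)$-minimality provides. So the entire structure theory of $G$---the low-vertex properties, the edge-block $B^*$, the absence of low cycles in $\mathcal B_0$---goes through with no change except the replacement of the phrase ``$h$-minimal'' by ``$(H,L)$-minimal.''

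The second step handles the auxiliary graphs constructed inside some of the reductions (for instance in Lemmas~\ref{dia}, \ref{lem:no2}, \ref{lem:no-low-2cycle}, \ref{lem:no-low-triangle}). In each such reduction, one forms a new multigraph $G'$ with a new cover $(H',L')$, shows $G'$ has no $(H',L')$-coloring, and extracts an $h'$-minimal subgraph $G'[U]$; in the original proof the minimality of the counterexample to Theorem~\ref{thm:stronger} was invoked. In the present context, Theorem~\ref{thm:stronger} is already proved, so I may apply it directly: either $\rho_{h'}(G'[U]) \leq -1$, or $(G'[U],h')$ is exceptional. The subsequent case analysis then proceeds identically. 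Whenever that analysis identifies an exceptional subgraph inside $G$ itself (rather than in a disjoint auxiliary graph), the original proof used this to contradict $G$ being a counterexample to Theorem~\ref{thm:stronger}; here the same identification contradicts the ``no exceptional subgraph'' hypothesis of Theorem~\ref{thm:HLcrit}, which is precisely why that hypothesis is the right replacement for allowing $G$ itself to be exceptional.

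Once all structural properties are in hand, the discharging argument of Section~\ref{sec:discharging} on $S_0^*$ applies verbatim and yields $ch^*(S_0^*) \leq -\tfrac{1}{2}$ when $S_0^* \neq V(G)$ or $\leq -1$ when $S_0^* = V(G)$, contradicting the lower bound $ch^*(S_0^*) \geq -\tfrac{1}{2}$ that follows from $\rho_h(G) \geq 0$. The main obstacle is purely bookkeeping: I must check each lemma to confirm that the appeals to $h$-minimality are in fact only appeals to $(H,L)$-minimality for the fixed cover, and that every exceptional outcome produced in the case analysis either yields an exceptional subgraph of $G$ (forbidden by hypothesis) or has already been excluded by an earlier step. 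No new ideas should be required, and the proof is essentially a translation exercise.
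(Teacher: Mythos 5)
Your proposal takes a genuinely different route from the paper: you attempt to rerun the entire minimum-counterexample structure theory of Section~\ref{sec:propG} and the discharging of Section~\ref{sec:discharging} with ``$(H,L)$-minimal'' substituted for ``$h$-minimal.'' The paper's actual proof of Theorem~\ref{thm:HLcrit} does nothing of the sort: it treats Theorem~\ref{thm:stronger} as a black box, extracts a proper $h$-minimal subgraph $G_0$ (which exists precisely \emph{because} $(H,L)$-minimality is weaker than $h$-minimality), takes a maximal $G_1 \supseteq G_0$ with $\rho_{G,h}(V(G_1)) \leq -1$, colors $G_1$ by $(H,L)$-minimality, and shows the complement $G - V(G_1)$ is DP $h'$-colorable so the coloring extends, giving the contradiction in under a page.

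There is a genuine gap in your first step. You assert that every appeal to $h$-minimality in Section~\ref{sec:propG} is ``only by asserting that some proper subgraph of $G$ admits an $(H,L|)$-coloring for the specific cover,'' but this is false in several places. The real content of $h$-minimality beyond $(H,L)$-minimality is that $G$ has \emph{no proper $h$-minimal subgraph}, and an $(H,L)$-minimal graph certainly can have such subgraphs (a proper $h$-minimal subgraph $G_0$ has no coloring for \emph{some} $h$-cover, but may very well have an $(H,L\vert_{V(G_0)})$-coloring for the fixed cover). Concretely, in the proof of Lemma~\ref{lem:j(k-2)} the step ``As $G$ is $h$-minimal and $U \neq V(G)$, it follows that $(G,h)$ and $(G',h')$ do not agree on $U$; therefore, $U$ contains a neighbor $u$ of $S$'' relies precisely on the absence of proper $h$-minimal subgraphs, and the subsequent arithmetic needs $\ell_1 + \ell_2 \geq 1$. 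The same pattern occurs in Lemmas~\ref{lem:2or3}, \ref{lem:no2in2}, \ref{dia}, and \ref{lem:no2} (``$G$ does not have a proper $h$-minimal subgraph'' / ``$h$ and $h'$ do not agree on $S_1$''). In the $(H,L)$-minimal setting, if the extracted $U$ has no neighbor of $S$, one instead deduces $\rho_{G,h}(U) \leq -1$; ruling this out requires something like Observation~\ref{obs:geq0}, which in turn is derived from Lemma~\ref{lem:j(k-2)} --- so you cannot quietly close the loop. The paper's maximality argument is exactly the device that handles the possibility of $\rho \leq -1$ subsets in an $(H,L)$-minimal graph, and your sketch has no substitute for it.
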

\begin{proof}
    Suppose for the sake of contradiction that $\rho_h(G) \geq 0$.
     As $G$ is $(H,L)$-minimal, no vertex $v \in V(G)$ satisfies $h(v) = 0$.
    By Theorem~\ref{thm:stronger}, $G$ is not $h$-minimal.
    Since $G$ is not DP $h$-colorable, $G$ has a proper subgraph $G_0$ that is $h$-minimal. 
    Since $G$ has no exceptional subgraph, Theorem~\ref{thm:stronger} implies that $\rho_{G,h}(V(G_0)) \leq -1$.
    We let $G_1 \supseteq G_0$ be a maximal subgraph of $G$ for which $\rho_{G,h}(V(G_1)) \leq -1$, and we observe that $V(G_1) \subsetneq V(G)$.
    By the $(H,L)$-minimality of $G$, $G_1$ has an $(H,L)$-coloring $f$.

    Now, define $G' = G - V(G_1)$.
    We observe that for each $v \in V(G')$, if $|E_G(v,V(G_1))| = j$, then $h(v) \geq j+1$. 
    Indeed, if $j=0$, then the inequality is clear.
    If $j \geq 1$ and $h(v) \leq j$, then $\rho_h(v) \leq 3j-1$. Therefore,
    $\rho_{G,h}(V(G_1) \cup \{v\}) \leq -1 + (3j-1) - 5j < -1$, contradicting the maximality of $G_1$.
    
    For each vertex $v \in V(G')$, write $h'(v) = h(v) - |E_G(v,V(G_1))|$. By our previous observation, $h'(v) \geq 1$ for each $v \in V(G')$.
    Consider a vertex subset $U \subseteq V(G')$, and let $j = |E_G(U,V(G_1))|$. By the maximality of $G_1$, 
    \[0 \leq \rho_{G,h}(V(G_1) \cup U) \leq -1 - 5j + \rho_{G,h}(U).\]
    Rearranging, $\rho_{G,h}(U) \geq 5j+1 $, which implies that $\rho_{G,h'}(U) \geq 1 + j$.
    Thus,
    every subgraph $G''$ of $G'$ satisfies $\rho_{h'}(G'') \geq 1$, and hence by Theorem~\ref{thm:stronger}, $G'$ has no $h'$-critical subgraph. Therefore, $G'$ is DP $h'$-colorable, and thus 
    $f$ can be extended to an $(H,L)$-coloring of $G$, a contradiction. 
\end{proof}

\begin{cor}\label{cor621}
    If $n \geq 6$ and $n \not \in \{7,10\}$, then $f_{\ell}(n,4) \geq \frac 85 n + \frac 15$.
\end{cor}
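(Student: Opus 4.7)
The plan is to deduce the corollary as a direct application of Theorem~\ref{thm:HLcrit}, essentially the list-coloring specialization of the main DP-coloring argument. Let $G$ be a list $4$-critical graph on $n$ vertices with $n\ge 6$, $n\notin\{7,10\}$. Since $\chi_\ell(G)\ge 4$, there is a list assignment $L$ with $|L(v)|=3$ for every $v\in V(G)$ such that $G$ has no $L$-coloring. Let $(H,L)$ be the DP-cover naturally induced by $L$ (with colors matched via equality). Because $G$ is list $4$-critical, every proper subgraph $G'$ of $G$ has $\chi_\ell(G')\le 3$ and in particular admits an $L|_{V(G')}$-coloring; therefore $G$ is $(H,L)$-minimal in the sense of Section~\ref{sec:list}. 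Set $h\equiv 3$, so that $(H,L)$ is an $h$-cover of $G$.

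Next I verify that $G$ has no exceptional subgraph, i.e., no subgraph that is a $4$-Ore graph on at most $10$ vertices (since $h\equiv 3$ on $G$, this is the only requirement). Suppose, for contradiction, that $F\subseteq G$ is such a $4$-Ore graph. If $F=G$, then $|V(G)|\in\{4,7,10\}$, which contradicts the hypothesis $n\ge 6$ and $n\notin\{7,10\}$. Hence $F$ is a proper subgraph of $G$. Every $4$-Ore graph is $4$-chromatic, so $\chi_\ell(F)\ge\chi(F)=4$; but $F$ being a proper subgraph of the list $4$-critical $G$ forces $\chi_\ell(F)\le 3$, a contradiction.

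With the exceptional-subgraph hypothesis verified, Theorem~\ref{thm:HLcrit} applies and yields $\rho_h(G)\le -1$. Since $h\equiv 3$, we have
\[
\rho_h(G)=8|V(G)|-5|E(G)|,
\]
so $|E(G)|\ge \frac{8n+1}{5}$. Taking the minimum over all $n$-vertex list $4$-critical graphs gives
\[
f_\ell(n,4)\ge \frac{8n+1}{5}=\frac{8}{5}n+\frac{1}{5},
\]
as required.

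I do not expect any real obstacle here: the whole content of the corollary is packaged into Theorem~\ref{thm:HLcrit}, and the only nontrivial verification is that a list $4$-critical graph cannot harbor a $4$-Ore subgraph on at most $10$ vertices, which follows immediately from the inequality $\chi_\ell\ge\chi$ and the $4$-chromaticity of $4$-Ore graphs once the three exceptional sizes $n\in\{4,7,10\}$ are ruled out by hypothesis.
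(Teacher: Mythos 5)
Your proof is correct and follows essentially the same route as the paper: encode the failing list assignment as a $3$-cover, verify that $G$ has no exceptional (i.e., small $4$-Ore) subgraph using $4$-chromaticity of $4$-Ore graphs together with the hypothesis $n\geq 6$, $n\notin\{7,10\}$, and invoke Theorem~\ref{thm:HLcrit}. The only difference is cosmetic — you spell out the no-exceptional-subgraph verification that the paper states in one compressed sentence.
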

\begin{proof}
    Let $G$ be a list $4$-critical multigraph on $n \geq 6$ vertices and $m$ edges, where $n \not \in \{7,10\}$. As $G$ has no $4$-critical proper subgraph, $n \geq 6$ and $n \not \in \{7,10\}$, $G$ has no exceptional subgraph.
    Let $L': V(G) \to \powerset{\N}$ be a list assignment for which $|L'(v)|\geq 3$ for every $v\in V(G)$ and $G$ has no $L'$-coloring. 

    Now, we construct a $3$-cover $(H,L)$ of $G$ for which the $L'$-coloring and the $(H,L)$-coloring problems on $G$ are equivalent. As $G$ has no $L'$-coloring and is list $4$-critical, it follows that $G$ is $(H,L)$-minimal. 
    Therefore, by Theorem~\ref{thm:HLcrit}, $\rho_h(G) =  8n  - 5 m \leq -1$. Rearranging, $m \geq  \frac 85 n + \frac 15$.
\end{proof}

\bibliographystyle{abbrv}
\small
\bibliography{ref}
 
  \end{document}